\documentclass[11pt]{article}
\usepackage[frenchb,english]{babel}
\usepackage[utf8]{inputenc}
\usepackage[T1]{fontenc} 
\usepackage{amsmath}
\usepackage{amssymb}
\usepackage{amsthm}
\usepackage{color}
\usepackage{caption}
\usepackage[pdftex]{graphicx}
\usepackage{epstopdf}
\usepackage{pstricks}
\usepackage{subfigure}
\usepackage{tikz}
\usetikzlibrary{patterns}

\usepackage{cases}

%Choix manuel du cadre d'écriture 
\textheight 23.5truecm
\textwidth 17.1truecm
\footskip 1.3truecm 
\voffset=-2.3truecm       
\hoffset=-1.7truecm  

%Redéfinition directe des newtheorems, du coup    only \begin{theorem} ..\end{theorem} etc
\newtheorem{theorem}{Theorem}[section] 
\newtheorem{remark}[theorem]{Remark} 
\newtheorem{prop}[theorem]{Proposition} 
 
\newtheorem{defn}[theorem]{Definition} 

%Numérote les équations selon la section 
\numberwithin{equation}{section}

%raccourcis Minh
\renewcommand{\d}[1]{\,{\rm d}#1}
\renewcommand{\epsilon}{\varepsilon}

\newcommand{\pd}[2]{\partial_{#2} #1}

 % for simple norm
 % for double norm
\newcommand{\jump}[1]{\left[\!\left[ #1 \right]\!\right]} 
\newcommand{\average}[1]{\left\{ #1 \right\}}

 % shape factor

%Raccourcis commandes

\def\RR{\mathbb{R}}

\let\epsilon=\varepsilon
\let\phi=\varphi

%En-tête première page
\title{
  Towards a new friction model for shallow water equations \\
  through an interactive viscous layer}
\date{\today}

\begin{document}
\maketitle

\begin{center}
  Fran\c{c}ois James$^{*}$, Pierre-Yves Lagr\'ee$^{o}$, Minh H. Le$^{\dag}$ and Mathilde Legrand$^{*}$
  \\
  \footnotesize{
    $^o$Sorbonne Université, CNRS, UMR 7190, \\ Institut Jean Le Rond d'Alembert, F-75005 Paris, France\\
    $^{\dag}$ Laboratoire d’Hydraulique Saint-Venant -- ENPC, CEREMA, EDF R\&D, Chatou, France,\\
    $^*$ Institut Denis Poisson, Universit\'e d'Orl\'eans, Universit\'e de Tours, \\
    CNRS UMR 7013, BP 6759, F-45067 Orl\'eans Cedex 2, France\\{\tt francois.james@univ-orleans.fr}
  }
\end{center}

\begin{abstract} 
  
The derivation of shallow water models from Navier-Stokes equations is revisited yielding a class of two-layer shallow water models.
An improved velocity profile is proposed, based on the superposition of an ideal fluid and a viscous layer inspired by the Interactive Boundary Layer interaction used in aeronautics.  This leads to a new friction law  which
depends not only on velocity and depth but of the variations of velocity and thickness of boundary layer.
The resulting system is an extended shallow water model consisting of three depth-integrated equations: the first two are mass and momentum conservation in which a slight correction on hydrostatic pressure has been made; the third one, known as von K\'arm\'an equation, describes the evolution of the viscous layer. 
  This coupled model is shown to be conditionally hyperbolic, and a Godunov-type finite volume scheme is also proposed.
  Several numerical examples are provided and compared to the ``Multi-Layer Saint-Venant'' model. 
  They emphasize the  ability of the model to deal with unsteady viscous effects. They  illustrate  also the phase-lag between friction and topography, and even recover possible reverse flows.  
  \end{abstract}
  
% Please provide minimum  5 keywords.
\small{
  {\bf Keywords:} shallow water, viscous layer, friction law, Prandtl equation, von K\'arm\'an equation.  

  % It is required to enter 2010 MSC.
  {\bf 2010 AMS subject classifications: } 35L60, 35L65, 35Q35, 65M08, 76N17.
}

%\tableofcontents
\pagestyle{plain}
\def\indic{\hbox{1\kern-2,44ptI}}

%Intro

\section*{Introduction}
\addcontentsline{toc}{section}{\protect\numberline{}Introduction}%Permet de mettre dans table des matières malgré * !!

Many phenomena in fluvial or maritime hydraulics  involve free surface flows in shallow waters for the study e.g. of floods and tides. Shallow water equations were originally introduced by Saint-Venant in 1871 \cite{saintvenant71} in the context of channel modelling. Since then, the model has been widely extended and is used in the modelling and numerical simulation of a number of natural or man-made phenomena such as river flow \cite{Goutal97,Burguete04}, flood forecasting \cite{Caleffi03,Kirst16}, pollutant transport \cite{Rivlin1995,Govindaraju1996}, dam-break \cite{Alcrudo01,Valiani02}, tsunami \cite{George06,Kim07,Popinet11}, overland flow \cite{Esteves00,Tatard08,Delestre09b}, soil erosion \cite{Castro2008,Le2015} and many others.

The shallow water system can be derived from the incompressible Navier-Stokes equations under several hypotheses; the main one being the {\em long wave approximation} meaning that the characteristic wavelength is much larger than the water depth (see figure \ref{fig:domain} for a sketch and definitions). Two consequences follow then: the hydrostatic pressure law, and the viscous term vanishing in the horizontal direction. Next, to proceed from Navier-Stokes to shallow water, the equations are integrated along the vertical direction. At this point, care has to be taken of the vertical velocity profile, which on the one hand has to be approximated to deal with nonlinearities of the momentum flux, but on the other hand drives the bottom boundary condition, hence the friction phenomena.

Two classical assumptions on the longitudinal velocity profile along the vertical direction lead to explicit integrations. The first one is a viscous Poiseuille-like (i.e. parabolic) profile on the whole water depth which gives rise to a linear (with respect to the depth-averaged velocity) friction term, sometimes referred to as laminar friction. The second one is a constant profile, somehow corresponding to an ideal fluid; but, by construction, there is {\em a priori} no friction term in the integrated equations. %This corresponds to the original model proposed by Saint-Venant, which he obtained by completely different means. 
Friction has to be added afterwards using empirical laws such as Manning, Ch\'ezy, etc (see e.g. \cite{Chow59}). The main drawback of these classical approaches is the non-adaptability of the friction terms for large variations of velocity because the assumed profiles (parabolic or flat) do not hold.

We intend here to pay a particular attention to the fact that these empirical laws are unable to describe the fluid inertia or more precisely to predict the {\em phase-lag} between the bottom friction and a perturbation of the bed which is known as an essential mechanism for dune or ripple formation \cite{Kennedy1963}. Indeed, it is well reported that for the case of a sub-critical flow on a bump, the maximum of the friction must be slightly shifted upstream of the crest to drag the particles from the troughs up to the crest. Consequently, coupling classical  shallow water equations with a mass conservation equation for sediment transport (e.g. Exner equation \cite{Exner1925} for bedload case) cannot predict the bed instability, see \cite{Charru2013,Kouakou2006} for more details.  
We look for a more flexible model with a better understanding of how the no-slip boundary condition gives rise to the friction term in the depth-integrated equations.
This will allow to recover this phase-lag phenomenon as well as boundary layer separation, a manifestation of the recirculation of the flow near bottom.

This is done by using an  asymptotic description of the fluid as a superposition of an ideal fluid over a viscous layer located at the bottom, with a strong interaction between the layers.
The thickness of the viscous layer  is quantified by a small parameter $\bar\delta$ related to the inverse of the Reynolds number of the flow. 
%Here $\bar\delta$ will be small but not necessarily vanishing as in classical boundary layer analysis. 
Integrating the incompressibility equation under this consideration leads to the same mass conservation equation of the usual shallow water system. On the contrary, the integration of the momentum equation exhibits major differences. On the one hand,  it turns out that the order of magnitude of the friction term is
larger as expect: 
precisely of order $\bar\delta$, while the above mentioned Poiseuille profile leads to a $\bar\delta^2$ order of magnitude. In the case of an ideal fluid $(\bar\delta=0)$, the model degenerates, of course, into the classical shallow water one.
On the other hand, we introduce a new closure for the momentum flux which involves an additional pressure law of order $\bar\delta$. At this stage, we obtain a system of two equations which are similar in structure with the usual shallow water system, but involving several additional unknown functions related to the viscous layer. 

The next step consists therefore in a careful analysis of the viscous layer. Following a classical methodology in aerodynamics, see e.g. \cite{Schlichting1968}, we integrate the Prandtl equation along the vertical axis to obtain the so-called von K\'arm\'an equation. It describes the evolution of the so-called displacement thickness $\delta_1$ (see Figure \ref{fig:domain}), which is involved in the definition of the afore mentioned corrective pressure, and can be interpreted as some physical thickness of the viscous layer. The system has to be complemented by the velocity equation of the ideal fluid, since it is involved in the von K\'arm\'an equation. We will discuss on the effects for flows over short bumps. The acceleration induced by the bump will change a lot the basic flow so that the shape velocity profile is no longer a half-Poiseuille nor a flat one. This study aims to understand this kind of flows which are not taken into account by the shallow water equations themselves. Furthermore, we will present the link between our system and the  Multi Layer Saint Venant  model proposed in \cite{Audusse2011a}.

The outline of the paper is as follows. In a first section we recall the Navier-Stokes system, and state the long wave approximation which is convenient for shallow water approximation. Next we turn to the viscous layer analysis, and derive Prandtl and von K\'arm\'an equations. Velocity profiles are also introduced. The third section is devoted to the derivation of various formulations of the Extended Shallow Water System. In Section 4 we derive some formal properties of the model, together with the numerical scheme.
 Finally, we evidence several properties of the model by numerical simulations.

%%%%%%%%%%%%%%%%%%%%%%%%%%%%%%%%%%%%%%%%%%%%%%%%%%%%%%
%%\section{Two dimensional equations}
%%%%%%%%%%%%%%%%%%%%%%%%%%%%%%%%%%%%%%%%%%%%%%%%%%%%%%
%%%%%%%%%%%%%%%%%%%%%%%%%%%%%%%%%%%%%%%%%%%%%%%%%%%%%%
\section{From Navier-Stokes to shallow water equations}
%%%%%%%%%%%%%%%%%%%%%%%%%%%%%%%%%%%%%%%%%%%%%%%%%%%%%%
In this section we recall how classical models for shallow waters are obtained from Navier-Stokes equations. The first assumption is a long wave approximation, stating that indeed we deal with a thin layer of water. Next, we integrate along the vertical direction, assuming a given velocity profile on the whole water depth.

\subsection{Navier-Stokes equations}
%%%%%%%%%%%%%%%%%%%%%%%%%%%%%%%%%%%%%%%%%%%%%%%%%%%%%%%%%%
We consider a fluid evolving in a time-dependant domain $\Omega_t=\RR\times \{f_b(x)\le y \le\eta(t,x)\}$.
%, where the first dimension $x$ is horizontal and with infinite extension, the second one $y$ is vertical. 
This thin layer is limited below by a fixed bottom represented by a function $y=f_b(x)$ and above by the free surface described by $y=\eta(t,x)$. We denote the water depth $h=\eta-f_b$. In this study, the properties of the air above the free surface are completely neglected (see Figure \ref{fig:domain}).

\begin{figure}
  \begin{center}
    \includegraphics[width=11cm]{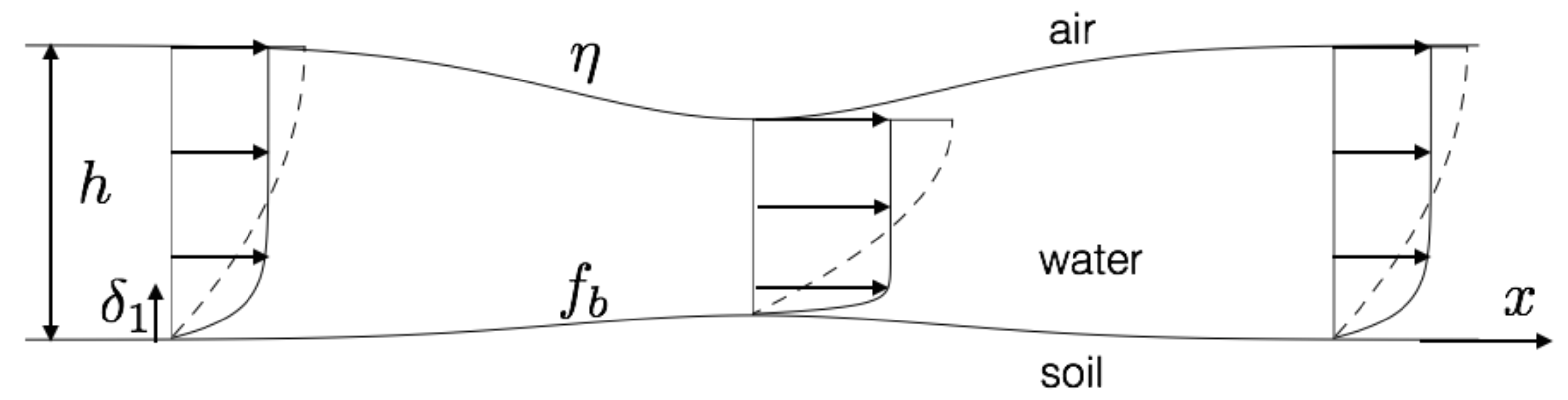}
  \end{center}
  \caption{{\small Domain under consideration: the water layer is defined by the depth $h(x,t)$ of characteristic value $h_0$, the bottom is a given function $f_b$ of characteristic length $L$ and $\eta$ is the free surface.  Two families of velocity profiles are displayed for the flow over the topography, first with the usual half-Poiseuille description (dashed), and second with the flat profile with a boundary layer (plain). Note that the shear (slope of the velocity at the wall) is completely different in those two descriptions even flux and depth are the same.}}\label{fig:domain}
\end{figure}

Our starting point is to consider the dimensionless Navier-Stokes equations expressing the mass and momentum conservations of an incompressible Newtonian fluid \cite{Schlichting1968}. For the sake of simplicity, we limit ourselves in this work to consider only laminar flows. Indeed, the asymptotic from the Navier-Stokes equations is clearer and the resulting description is quantitative. A similar study for turbulent flows can be made with a modified Reynolds tensor. Nondimensionalization has been made with the same characteristic length $h_0$, e.g. a reference water depth, for both the abscissa and the ordinate. The dimensionless Navier-Stokes equations write
\begin{align}
  \label{Mass}
  \partial_{x}u+\partial_{y}v &=0,\\
  \label{Momentum1}
  \partial_{t}u+u\partial_{x}u+v\partial_{y}u &=-\partial_{x}p+\frac{1}{Re}\nabla^2 u,\\
  \label{Momentum2}
  \partial_{t}v+u\partial_{x}v+v\partial_{y}v &=-\partial_{y}p-\frac{1}{Fr^2}+\frac{1}{Re}\nabla^2 v,
\end{align}
where $u,v$ are the horizontal and vertical velocities respectively and $p$ is the pressure. We have defined the Reynolds $Re$ and Froude $Fr$ numbers given by 
\begin{equation*}
  Re := \frac{u_0h_0}{\nu}, \quad Fr := \frac{u_0}{\sqrt{gh_0}}
\end{equation*}
in which $u_0$ and $\nu$ being the reference velocity and kinematic viscosity respectively and the constant $g$ stands for the acceleration due to gravity. The Reynolds number expresses the ratio between the inertia force and the viscosity; the Froude number represents the ratio between the kinetic and potential energies. 

The system is complemented with the following boundary conditions:
\begin{itemize}
\item at the bottom $y=f_b(x)$: no-slip condition, i.e. $u=v=0$,
\item at the free surface $y=\eta(t,x)$:
  \begin{itemize}
  \item kinematic boundary condition: $v=\partial_t\eta+u\partial_x\eta$
  \item continuity of the stress tensor: $\sigma\cdot{\bf n}=0$, where 
    ${\bf\sigma}=\begin{pmatrix}
    2\partial_xu-p&\partial_xv+\partial_yu\\
    \partial_yu+\partial_xv&2\partial_yv-p
  \end{pmatrix}$ is the stress tensor and ${\bf n}=\dfrac1{\sqrt{1+(\partial_x\eta)^2}}\begin{pmatrix}\partial_x\eta\\-1\end{pmatrix}$ is the outer unit normal to the free surface.
  \end{itemize}
\end{itemize}

\subsection{Long wave scaling}
%%%%%%---------------------------------- Long wave
Up to now, no hypothesis has been taken into account for the size order of the characteristic quantities $u_0,h_0$. We have in mind applications to rivers or coastal flows for which the following conditions may be observed: 
\begin{itemize}
\item the Reynolds number is large,
\item the horizontal velocity has small variation along the vertical,
\item the vertical velocity is small compared to the horizontal velocity.
\end{itemize}
We introduce the aspect ratio 
\begin{equation*}
  \epsilon := \frac{h_0}{L} \ll 1,
\end{equation*}
where $L$ is a characteristic wave length and $h_0$ a characteristic depth. Let us start by investigating the third one, which justifies the following scaling for the velocities:
\begin{equation*}
  v=\epsilon \tilde v, \quad u=\tilde u.
\end{equation*}
Then the mass conservation equation (\ref{Mass}) enforces also a scaling for the space variables $y \ll x$ since
\begin{equation*}
  0=\partial_{x}u+\partial_{y}v =\partial_x\tilde u+\epsilon \partial_y\tilde v.
\end{equation*}
Hence there are two options for the variable scaling:
\begin{enumerate}
\item Long wave scaling:
  \begin{equation*}
    x=\frac{\tilde x}{\epsilon}, \quad y=\tilde y, \quad t=\frac{\tilde t}{\epsilon}.
  \end{equation*}
  %This scaling is the same as a nondimensionalization of the Navier-Stokes equations with a characteristic length $L$ for the domain and a characteristic physical height $h_0$.
  It means that the long wave hypothesis needs a long time study. Furthermore, since $f_b(x)=\tilde{f_b}(\tilde x)$ we have $f_b'(x)=\epsilon \tilde{f_b}'(\tilde x)$ and so the bottom slope needs to be small enough.
\item Thin layer scaling:
  \begin{equation*}
    x=\tilde x, \quad y=\epsilon \tilde y, \quad t=\tilde t
  \end{equation*}
  This scaling restricts the study of small vertical velocity only to a thin water depth which tends to zero when $\epsilon\to 0$. It is the classical scaling used in the boundary layer approach.
\end{enumerate}
So, the long wave scaling is compatible with the hypothesis about small vertical velocity compared to horizontal velocity. Let us see the consequences for the set of equations (\ref{Mass})--(\ref{Momentum2}) and the boundary conditions:
\begin{align}
  \partial_{\tilde x}\tilde u+\partial_{\tilde y}\tilde v & = 0, \nonumber \\
  \label{tildeM1}
  \epsilon\left[\partial_{\tilde t}\tilde u +\tilde u\partial_{\tilde x}\tilde u+ \tilde v\partial_{\tilde y} \tilde u\right] & = -\epsilon \partial_{\tilde x}\tilde p+\frac{1}{Re}\left[\epsilon^2\partial_{\tilde x}^2\tilde u+\partial_{\tilde y}^2\tilde u\right],\\
  \label{tildeM2}
  \epsilon^2\left[\partial_{\tilde t}\tilde v+\tilde u\partial_{\tilde x}\tilde v+\tilde v \partial_{\tilde y}\tilde v\right] & = -\frac{1}{Fr^2}-\partial_{\tilde y}\tilde p+\frac{\epsilon}{Re}\left[\epsilon^2 \partial_{\tilde x}^2\tilde v+\partial_{\tilde y}^2\tilde v\right],\\
  \tilde u=\tilde v & = 0 \quad \textrm{at}\quad \tilde y=\tilde{f_b},\nonumber \\
  \partial_{\tilde t}\tilde \eta+\tilde u\partial_{\tilde x}\tilde \eta - \tilde v & = 0 \quad \textrm{at}\quad \tilde y=\tilde \eta,\nonumber  \\
  \begin{pmatrix}
    \epsilon \big((2\epsilon \partial_{\tilde x}\tilde u-\tilde p)\partial_{\tilde x}\tilde \eta-\epsilon \partial_{\tilde x}\tilde v\big)-\partial_{\tilde y}\tilde u\\
    \epsilon \big(\partial_{\tilde x}\tilde \eta(\partial_{\tilde y }\tilde u+\epsilon \partial_{\tilde x}\tilde v)-2\partial_{\tilde y}\tilde v\big)-\tilde p
  \end{pmatrix} &=  \begin{pmatrix}0\\0\end{pmatrix} \quad \textrm{at} \quad \tilde y=\tilde \eta. \nonumber
\end{align}
Taking an approximation at order $O(\epsilon)$ leads to:
\begin{itemize}
\item Cancellation of the viscosity in horizontal direction. Equation (\ref{tildeM1}) reduces to:
  \begin{equation*}
    \partial_{\tilde t}\tilde u +\tilde u\partial_{\tilde x}\tilde u+ \tilde v\partial_{\tilde y} \tilde u = -\partial_{\tilde x}\tilde p+\frac{1}{\epsilon Re}\partial_{\tilde y}^2\tilde u.
  \end{equation*}
\item Simplified version of the stress tensor continuity at the free surface: 
  \begin{equation*}
    \tilde p=0, \quad \partial_{\tilde y}\tilde u=0\quad \textrm{at} \quad \tilde y=\tilde\eta.
  \end{equation*}
\item Hydrostatic pressure with (\ref{tildeM2}) and $\tilde p=0$ at the surface:
  \begin{equation*}
    \partial_{\tilde y}\tilde p=-\frac{1}{Fr^2} \Longrightarrow \tilde p=\frac{1}{Fr^2}(\tilde \eta-\tilde y).
  \end{equation*}
  This approximation implies that $\partial_{\tilde x}\tilde p$ does not depend on $\bar y$. In other words, the pressure gradient is conserved over the vertical. This result for pressure at order $O(\epsilon)$ is already observed (see e.g. \cite{Kalliadasis2011}).
\end{itemize}
\begin{remark}
  We emphasize here that the above properties, which are classical shallow water hypotheses, are brought out solely by the long wave approximation.
\end{remark}

To summarize, the long wave approximation of the Navier-Stokes equations consists of the following system, sometimes called the RNSP equations (Reduced Navier-Stokes/Prandtl \cite{Lagree2005}). This will be our reference system for the remaining of this article. In these equations, it is more convenient to define an effective Reynolds $Re_h$ number which takes into account the aspect ratio of the model
\begin{equation*}
  Re_h := \epsilon Re.
\end{equation*}
Dropping all the tildes from the variables and unknowns, the system can be written:
\begin{align}
  \label{LWmass}
  \partial_x u + \partial_y v &=0,\\
  \label{LWmomentum}
  \partial_t u + u\partial_x u + v\partial_y u & =-\partial_x p + \frac{1}{Re_h}\partial_y^2 u,\\
  \label{Hydrop}
  \partial_y p &= -\frac{1}{Fr^2},\\
  \label{kinematic}
  \partial_t \eta + u\partial_x \eta - v &= 0\quad \textrm{at}\quad y = \eta,\\
  \label{ApproxStress}
  p = 0, \quad \partial_y u & =0 \quad \textrm{at}\quad y = \eta, \\
  \label{eq:no-slip}
  u = v &=0 \quad \textrm{at} \quad y = f_b.
\end{align}

In the limit $Re_h\to\infty$, the case of an incompressible ideal fluid, the RNSP equations degenerate to the hydrostatic Euler system.
Indeed, the equations of mass conservation \eqref{LWmass}, hydrostatic pressure \eqref{Hydrop} and boundary condition at free surface \eqref{kinematic}-\eqref{ApproxStress} are unchanged, but the momentum conservation equation is replaced by
\begin{equation}\label{eq:Emomentum}
  \partial_t u + u\partial_x u + v\partial_y u = -\partial_x p.
\end{equation}
We conclude this paragraph by giving the behaviour of horizontal velocity.
\begin{prop}\label{eqUe}
  The horizontal velocity $u$ of a smooth solution of the hydrostatic Euler system is constant along the vertical direction, so we write $u := u_e(t,x)$, and verifies the following equation:
  \begin{equation}\label{eq:FP}
    \partial_tu_e+u_e\partial_xu_e=-\partial_x p.
  \end{equation}
\end{prop}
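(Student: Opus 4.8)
The plan is to show first that the horizontal pressure gradient does not depend on $y$, then that the quantity $\omega := \partial_y u$ is transported by the flow, and finally to combine these two facts to deduce both the absence of vertical shear and equation \eqref{eq:FP}.

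First I would integrate the hydrostatic relation \eqref{Hydrop} in $y$, using the surface condition $p=0$ at $y=\eta$ from \eqref{ApproxStress}, to obtain $p = \frac{1}{Fr^2}(\eta - y)$. Differentiating in $x$ gives $\partial_x p = \frac{1}{Fr^2}\partial_x\eta$, which is independent of $y$; equivalently $\partial_y\partial_x p = 0$. This is the crucial structural feature that drives the rest of the argument.

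Next I would derive an evolution equation for $\omega := \partial_y u$ by applying $\partial_y$ to the momentum equation \eqref{eq:Emomentum}. Expanding the convective terms and using the $y$-independence of $\partial_x p$ established above, the right-hand side drops out; then invoking incompressibility \eqref{LWmass} in the form $\partial_y v = -\partial_x u$, the two terms proportional to $\omega\,\partial_x u$ cancel, and one is left with the homogeneous transport equation $\partial_t \omega + u\,\partial_x\omega + v\,\partial_y\omega = 0$. In other words, $\omega$ is materially conserved along particle trajectories.

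Finally, the surface condition $\partial_y u = 0$ from \eqref{ApproxStress} reads $\omega = 0$ on the free surface. Since $\omega$ solves a homogeneous transport equation and vanishes for a flat (shear-free) profile — the configuration relevant to shallow water — it remains identically zero, so $u$ is independent of $y$ and we may write $u = u_e(t,x)$. Substituting $\partial_y u = 0$ back into \eqref{eq:Emomentum} kills the $v\,\partial_y u$ term and yields $\partial_t u_e + u_e\,\partial_x u_e = -\partial_x p$, which is \eqref{eq:FP}. The main obstacle is exactly this last step: the condition $\omega = 0$ is prescribed on a material (hence characteristic) surface of the transport operator, so by itself it does not force $\omega$ to vanish in the interior; one genuinely needs the columnar/shear-free structure of the data (or a hypothesis guaranteeing that the relevant trajectories reach the free surface) to propagate $\omega = 0$ throughout the layer. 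I expect the bookkeeping in the $\partial_y$-differentiation to be routine, and the conceptual care about this initial/boundary propagation to be the real content of the proof.
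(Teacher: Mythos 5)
Your proof follows exactly the paper's argument: differentiate the momentum equation \eqref{eq:Emomentum} in $y$, use incompressibility and the $y$-independence of $\partial_x p$ to obtain the homogeneous transport equation $\frac{\rm d}{{\rm d}t}(\partial_y u)=0$ along particle trajectories, and conclude from the surface condition $\partial_y u|_{y=\eta}=0$. The caveat you raise at the end --- that the free surface is itself a material surface of the transport operator, so the vanishing of $\partial_y u$ there does not by itself propagate into the interior --- is a genuine subtlety, but the paper's own proof makes the same leap (it traces each characteristic back to a ``foot'' on the free surface), so you have if anything been more careful than the original.
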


\begin{proof}
  Applying a partial derivative in $y$ on equation \eqref{eq:Emomentum} and taking into account the incompressibility \eqref{LWmass} together with the hydrostatic pressure law \eqref{Hydrop} leads to $\frac{\rm d}{{\rm d}t} (\partial_y u) = 0$. It means that $\partial_y u$ remains constant along the characteristic curve $x'(t) = u, ~ y'(t) = v$, hence $\partial_y u = \partial_y u|_{(x_0,\eta_0)} = 0$ by condition \eqref{ApproxStress} where $(x_0, \eta_0)$ is the foot of characteristic curve starting at the free surface $y = \eta$. Consequently the horizontal velocity $u$ of hydrostatic Euler system is independent on $y$.
\end{proof}

As it is well-known in Ideal Fluid theory, the depth-independent horizontal velocity component leads to a slip velocity at the bed. Thus, the no-slip boundary condition \eqref{eq:no-slip} is not relevant for ideal fluid and has to be replaced by a weaker one called the {\em non-penetration} condition
\begin{equation}\label{eq:non-penetration}
  v_e|_{y = f_b}  =  u_e {f_b}'.
\end{equation}

Recovering some connection between the ideal fluid equations and the no-slip condition is precisely the aim of the viscous layer theory, which we will present in section \ref{sec:BoundLayer}.

\subsection{Classical shallow water equations}\label{subsec:SW}
Let us recall briefly the classical way to obtain the shallow water model by vertical integration the RNSP equations over the whole water depth. 

\begin{defn}\label{def:h-U}
The water depth $h$ and the depth-averaged horizontal velocity $U$ are given by
\begin{equation}\label{eq:definition-h-U}
  h := \eta - f_b, \quad hU := \int_{f_b}^{\eta}u\d{y}.
\end{equation}
\end{defn}

Integrating the mass conservation equation \eqref{LWmass}, from $f_b$ to $\eta$, yields
\begin{align*}
  0 &= \int_{f_b}^{\eta}\partial_x u\d{y} + v|_{y = \eta} - v|_{y = f_b} \\
  &= \partial_x \left(\int_{f_b}^{\eta}u\d{y}\right) - u|_{y = \eta}\partial_x\eta + u|_{y = f_b'}f_b' + v|_{y = \eta} - v|_{y = f_b} \\
  &= \partial_x \left(\int_{f_b}^{\eta}u\d{y}\right) + \partial_t\eta 
\end{align*}
in which we have applied the kinematic condition \eqref{kinematic} at the surface and the no-slip condition \eqref{eq:no-slip}---or the weaker one \eqref{eq:non-penetration}---at the bottom. As $f_b$ is time-independent, we can write $\partial_t \eta = \partial_t (\eta-f_b)$; and by using definition \eqref{eq:definition-h-U}, the mass conservation in its integrated form reads
\begin{equation}\label{eq:sw-mass}
  \partial_t h+\partial_x(hU)=0.
\end{equation}

In the same way, we now derive the depth-integrated momentum balance equation. The main point to notice at this stage is the appearance of the nonlinear momentum flux and the friction term. Indeed, by integrating the momentum equation \eqref{LWmomentum} over the depth and using the fact that $\partial_xp$ does not depend on $y$, and also the condition $\partial_y u|_{y = \eta} = 0$ from \eqref{ApproxStress}, we get
\begin{align*}
  -\frac{1}{Re_h}\partial_y u|_{y = f_b} - h\partial_y p & =  \int_{f_b}^{\eta}\partial_t u \d{y} +\int_{f_b}^{\eta} \left(u \partial_x u + v\partial_y u\right) \d{y} \\
  & = \int_{f_b}^{\eta}\partial_t u \d{y} + \int_{f_b}^{\eta}2u\partial_xu\d{y} + uv|_{y =\eta} - uv|_{y = f_b'} \\
  & = \partial_t\left(\int_{f_b}^{\eta}u\d{y}\right) + \partial_x\left(\int_{f_b}^{\eta}u^2\d{y}\right) -u(\partial_t\eta + u\partial_x \eta - v)|_{y = \eta} + u(uf_b' -v)|_{y = f_b'}.
\end{align*}

Using definition \eqref{eq:definition-h-U} and applying the free surface condition \eqref{kinematic} and the bottom condition \eqref{eq:no-slip} or \eqref{eq:non-penetration}, we can rewrite the integrated momentum conservation equation in the form
\begin{equation}\label{eq:sw-momentum}
  \partial_t(hU)+\partial_x (\beta hU^2) = -h\partial_x p - \tau_b,
\end{equation}
in which we have introduced the so-called Boussinesq coefficient $\beta$ and the bottom shear stress $\tau_b$, also called {\em friction}, which are defined by 
\begin{equation}\label{eq:def-flux-friction}
  \int_{f_b}^{\eta}u^2\, dy := \beta hU^2, \quad \tau_b := \frac{1}{Re_h}\partial_y u|_{y = f_b}.
\end{equation}
Therefore, evaluating $\beta$ and $\tau_b$ requires the knowledge of the flow. It can be checked that $\beta \geq 1$ since, by definition \eqref{eq:def-flux-friction}, we can write
\begin{equation}\label{eq:Boussinesq}
  \beta = 1+\frac{1}{h}\int_{f_b}^{\eta}\left(1-\frac{u}{U}\right)^2\d{y}.
\end{equation}
Without complementary equations, a closure relation on the velocity profile is needed in order to compute the Boussinesq coefficient and to express the friction term in function of the conservative variables $(h, hU)$. Let us recall two constitutive profiles which are often adopted in the context of shallow water flows. 

\medskip
\noindent{\bf Flat profile.} This is the most classical approach in hydraulic river modelling. Scaling analysis reveals that the velocity profile is quasi-flat except within a very thin-layer close to the river bed. Based on this consideration, 
the velocity profile can be assumed to be flat over the whole water depth, so that $\beta$ equals one.
The friction term $\tau_b$ is not properly defined in this context.
The flow can be described using the Euler system resulting in an inviscid shallow water model -- equation \eqref{eq:sw-momentum} without the friction term $\tau_b$. 

 A large family of empirical friction laws exist, which express the friction as a quadratic function of $U$ with a friction coefficient $C_f = O(Re^{-1/4})$ for a smooth bottom, see \cite{Schlichting1968}. This coefficient depends on $h$ and $U$ as well, for instance with Chézy, Manning laws, see \cite{Chow59} for a bibliographical study. In summary, this kind of model consists in writing 
\begin{equation}\label{eq:flat-profile}
  \beta = 1, \quad \tau_b = \frac{1}{2}C_f U^2.
\end{equation}
We emphasize again that the friction term derives here from empirical considerations.

\medskip
\noindent{\bf Poiseuille profile.} This type of profile is inspired from a analytic solution of the RNSP equations in the case of an uniform flow on a negative constant slope. The balance between the friction and the driving force of the slope gives a {\em self-similar} parabolic solution, also known as the half-Poiseuille or Nusselt solution: 
\begin{equation*}
  \frac{u}{U} = 3\left(\zeta - \frac{1}{2}\zeta^2\right), \quad 0 \leq \zeta := \frac{y-f_b}{h} \leq 1.
\end{equation*}
This choice of profile leads to
\begin{equation}\label{eq:Poiseuille-profile}
  \beta = \frac{6}{5}, \quad \tau_b = \frac{3}{Re_h}\frac{U}{h}.
\end{equation}
The friction is indeed linear with respect to the mean velocity, and is referred to as laminar friction. 

Using such a prescribed profile in shallow water equations leads to some important restriction of the model, in particular when dealing with large variation of the velocity. For example it has been reported in \cite{Hogg2004} that a constant value for the Boussinesq coefficient is less adapted to describe the dynamics of the fluid layer close to a dry-wet transition.

We conclude this section by evidencing that  both these models do not present a phase-lag for the flow over a bump.
Let us  study a steady  linearized solution of the usual shallow water equations. Consider such a small perturbation of the bed $f_b$ that we can write in the form
$ f_b = \epsilon f_b^1,$where $\epsilon$ is just a small parameter and not necessarily the aspect ratio defined before. We look for the solution in the form
\begin{equation}\label{eq:lin-form}
  h = h^0 + \epsilon h^1, \quad U = U^0 + \epsilon U^1.
\end{equation}
For high Reynolds numbers, we see from relations \eqref{eq:flat-profile} and \eqref{eq:Poiseuille-profile} that the friction is negligible. We can consider therefore $h,U$ solution of the following frictionless and steady state shallow water equations
\begin{equation}\label{eq:steady-sw}
  \partial_x(hU) = 0, \quad \partial_x\left(\beta hU^2 + \frac{ 1}{2Fr^2}h^2\right) = -\frac{1}{Fr^2}hf_b'.
\end{equation}
Inserting \eqref{eq:lin-form} in \eqref{eq:steady-sw} and identifying powers of $\epsilon$ leads to a cascade of equations for each terms $h^i, U^i, ~i = 0,1$. Then, it should be checked that the zero-order terms $h^0$ and $U^0$ are needed constant. For first-order terms, a straightforward calculation leads to  
\begin{equation*}
  h^0\partial_x U^1 + U^0\partial_x h^1 = 0, \quad \left(\frac{h^0}{Fr^2} - \beta(U^0)^2 \right)\partial_x h^1 = - \frac{h^0}{Fr^2} (f_b^1)'.
\end{equation*}
By introducing the local Froude number $Fr_0$, we can express the linearized solution in the form
\begin{equation}\label{eq:lin-sol-sw}
  Fr_0^2 := \frac{\beta(U^0)^2}{h^0}Fr^2, \quad h = h^0 + \frac{1}{Fr_0^2 - 1}f_b, \quad U = U^0 + \frac{U^0}{h^0}\frac{1}{1-Fr_0^2}f_b.
\end{equation}
As we can see, $U$ is exactly {\em in-phase} with $f_b$. As a consequence, local maxima of the friction estimated by empirical formulas \eqref{eq:flat-profile} or \eqref{eq:Poiseuille-profile} are always reached at $f_b'=0$, that is at the crest of the bump. Indeed, because $\partial_x h = \partial_x U =0$ if $f_b' = 0$ by \eqref{eq:lin-sol-sw}, it follows that $\partial_x \tau_b = \partial_h\tau_b \partial_x h + \partial_U\tau_b \partial_x U = 0$.

%%%%%%%%%%%%%%%%%%%%%%%%%%%%%%%%%%%%%%%%%%%%%%%%%%%%%%
\section{Viscous layer analysis}\label{sec:BoundLayer}
%%%%%%%%%%%%%%%%%%%%%%%%%%%%%%%%%%%%%%%%%%%%%%%%%%%%%%%
We turn now to the main step towards the model we look for. It mainly consists in dividing the fluid in two layers:
\begin{itemize}
\item an ideal fluid layer dealing with the free surface;
\item a thin viscous layer with the no-slip condition at the bottom.
\end{itemize}
In the first layer addressing to ideal fluid, we take advantage of the explicit integration along the vertical. In the second one describing viscous layer, we take into account the viscosity in the vertical direction and recover some friction in the integrated equations. This section is devoted to the study of the viscous layer, and to the analysis of the interactions between the two layers. 

We introduce a small parameter $\bar \delta$, whose magnitude will be specified below. It is related to the thickness of the viscous layer, but does not correspond to its actual physical value. We follow the classical strategy used in the boundary layer theory \cite{PRANDTL1928,Schlichting1968} except that in that case $\bar \delta\to 0$, whereas we keep a finite value here. The first step is to rescale again the RNSP equations with the thin layer scaling to obtain a set of the well-known Prandtl equations. The next step consists in vertical integration of these equations over the viscous layer. This leads to the so-called von K\'arm\'an equation, where extra unknowns are introduced. Finally, some suitable assumptions have to be made on the velocity profile in order to obtain a closed model.

\subsection{Prandtl equations}
%%%%%%%%%%%%%%%%%%%%%%%%%%%%%%%%%%
We introduce the following change of variables, referred to as the Prandtl shift:
\begin{equation}\label{PrandtlShift}
  x=\bar x,\quad y=\bar \delta \bar y +f_b, \quad t=\bar t,\quad  p=\bar p,\quad
  \bar u= u,\quad \bar v=\frac{ v-f_b' u}{\bar \delta}.
\end{equation}	
By this, the RNSP equations \eqref{LWmass}, \eqref{LWmomentum}, \eqref{Hydrop} and \eqref{eq:no-slip} are transformed  into a set of boundary layer equations on a {\em flat bottom}
\begin{align*}
  \partial_{\bar x}\bar u+\partial_{\bar y}\bar v & = 0,\\
  \partial_{\bar t}\bar u+\bar u\partial_{\bar x} \bar u+\bar v\partial_{\bar y}\bar u &= -\partial_{\bar x}\bar p+\frac{f_b'}{\bar \delta}\partial_{\bar y}\bar p+\frac{1}{Re_h \bar \delta^2}\partial_{\bar y}^2\bar u,\\
  \frac{1}{\bar \delta}\partial_{\bar y}\bar p & = -\frac{1}{Fr^2},\\
  \bar u=\bar v &= 0 \quad \textrm{at} \quad \bar y=0.
\end{align*}

There are several possible scalings for $\bar\delta$ in terms of $Re_h$:
\begin{itemize}
\item if $\bar \delta$ verifies $Re_h\bar \delta^2 \gg 1$, we recover the ideal fluid equations;
\item if $\bar \delta$ satisfies $ Re_h\bar \delta^2\ll 1$, we obtain 
  $\partial_{\bar y}^2\bar u=0$ that leads to $\bar u=0$ due to the continuity of the stress tensor and the no-slip condition. So we do not consider this trivial case;
\item the last possibility is $Re_h\bar \delta^2\sim 1$, which balances the convective terms and the diffusive one. It is called {\em dominant balance} or  {\em least degeneracy principle} \cite{VanDyke1975}, 
and allows to preserve as many as possible terms in the equations.
\end{itemize}
This is why in what follows, we consider the scaling
\begin{equation}\label{Scaling}
  \bar \delta=\frac{1}{\sqrt{Re_h}} \ll 1.
\end{equation}
With this choice of $\bar\delta$, the viscous term appears with the same order as the other terms in the momentum equation. We obtain the Prandtl equations written in viscous layer variables:
\begin{numcases}{}
  \partial_{\bar x}\bar u+\partial_{\bar y} \bar v=0&\label{Eccl1}\\
  \partial_{\bar t} \bar u+\bar u \partial_{\bar x} \bar u +\bar v \partial_{\bar y} \bar u  = -\partial_{\bar x} \bar p - \dfrac{f'_b}{Fr^2} + \partial_{\bar y}^2 \bar u &\label{Eccl2}\\ 
  \partial_{\bar y}\bar p=-\dfrac{\bar\delta}{Fr^2}&\label{Eccl3}\\
  \bar u=\bar v=0\quad \textrm{ when }\; \bar y=0& \label{Eccl4}
\end{numcases}
We notice that this system of equations is in the same form as the Prandtl equations obtained directly from Navier-Stokes equations with classical boundary layer scaling (\cite {Schlichting1968}, ch. VII) except for the topography term in the momentum equation \eqref{Eccl2}.

Up to now, we do not have enough boundary conditions for the viscous layer. The natural connection consists in assuming that the velocity at the ``top'' of the viscous layer coincides with the velocity of inviscid layer. Precisely, we impose the following matching boundary condition
\begin{equation}\label{Eccl5}
  \bar u(\bar t,\bar x, \bar\eta ) = u_e(t,x), \quad\mbox{where } \bar\eta := \frac{\eta - f_b}{\bar\delta},
\end{equation}
which is obviously compatible with the Prandtl shift \eqref{PrandtlShift} since $\bar x=x$ and $\bar t=t$. Notice that in classical boundary layer theory the limit is given by the {\em asymptotic} matching $\bar u(\bar t, \bar x,\bar y \rightarrow \infty ) \rightarrow u_e(t,x)$.

\subsection{Von K\'arm\'an equation}
%%%%%%%%%%%%%%%%%%%%%%%%%%%%%%%%%%%%%%
The von K\'arm\'an equation expresses the defect of velocity between the ideal fluid and the viscous layer. A classical way to obtain such an equation consists in writing the Prandtl equation, then introducing the  velocity defect $(u_e-\bar u)$, and finally integrating it on the viscous layer (see Schlichting \cite{Schlichting1968}). We introduce the following two integrated quantities, see Figure \ref{Fig:definition-delta1}:
\begin{defn}\label{def:delta1}  Let $U$ be the depth-averaged velocity. We define
  \begin{itemize}
  \item the displacement thickness $\delta_1$ given by 
    \begin{equation}\label{tildeUfctue}
      hU = (h-\bar\delta\delta_1)u_e,
    \end{equation}
  \item the momentum thickness $\delta_2$ given by 
    \begin{equation}\label{usquarefctue}
      \int_{f_b}^{\eta}u^2\d{y} = \big(h-\bar\delta(\delta_1+\delta_2)\big)u_e^2.
    \end{equation}
  \end{itemize}
\end{defn}
Physically, the displacement thickness expresses the distance by which the ground should be displaced to obtain an ideal fluid with velocity $u_e$ and the same flow rate $hU$ (see Figure \ref{Fig:definition-delta1}). In the same way, the momentum thickness accounts for the loss of momentum in the viscous layer.

\begin{figure}[ht!]\centering
  \begin{tikzpicture}[domain=-1:8, scale=0.8]
    \draw[-] (0,0) -- (8,0);
    \fill[pattern=north east lines] (0,0) -- (8,0) -- (8,-0.2) -- (0,-0.2);
    \draw[->] (4,0.5) -- (5,0.5);
    \draw[->] (4,1) -- (5,1.);
    \draw[->] (4,1.5) -- (5,1.5) node[right]{$u_e$};
    \draw[->] (4,2.) -- (5,2);
    \draw[->] (4,2.5) -- (5,2.5);
    \draw[black,dashed] (4,0.5) -- (4,2.5);
    \draw[black,dashed] (5,0.5) -- (5,2.5);
    \draw[thin, dashed, red] (1,0) -- (1,2.5);
    \draw[->, red] (1,0.1) -- (1.15,0.1);
    \draw[->, red] (1,0.5) -- (1.6,0.5);
    \draw[->, red] (1,1) -- (1.9,1);
    \draw[->, red] (1,1.5) -- (2,1.5)  node[right]{$u$};
    \draw[->, red] (1,2) -- (2,2);
    \draw[->, red] (1,2.5) -- (2,2.5);
    \draw[red] plot[domain=0:1.5] ({1+(1+(\x/1.5+1)*(\x/1.5-1)^3)}, {(\x)})  ;
    \draw[->, red] (2,1.5) -- (2,2.5);
    \draw[black,dashed] (0.5,0.5) -- (6,0.5);
    \draw[blue,<->] (6.,0) -- (6.,0.5) node[right]{$\bar\delta\delta_1$};
    \draw[blue,<->] (7.,0) -- (7.,2.5) node[right]{$h$};
  \end{tikzpicture}
  \caption{Interpretation of the displacement thickness, the flux of mass is the same in the viscous layer and in a equivalent layer of ideal fluid shifted by an amount of $\bar\delta\delta_1$.}
  \label{Fig:definition-delta1} 
\end{figure}
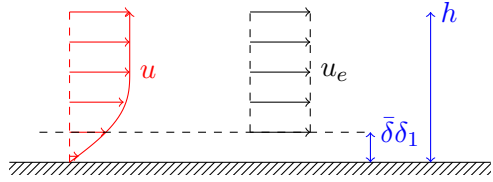
A simple computation from \eqref{tildeUfctue} and \eqref{usquarefctue} leads to the following expressions for these quantities:
\begin{equation*}
  \delta_1 = \int_0^{\bar\eta}\left(1-\dfrac{\bar u}{u_e}\right)\,\d{\bar y}, \quad 
  \delta_2 = \int_0^{\bar\eta}\dfrac{\bar u}{u_e}\left(1-\dfrac{\bar u}{u_e}\right)\,\d{\bar y}.
\end{equation*}
In the limit $\bar\delta\to 0$, we recover the classical formul\ae\ for $\delta_1, ~\delta_2$ in the boundary layer scaling \cite{Schlichting1968}:
\begin{equation*}
  \delta_1 = \int_0^{+\infty}\left(1-\dfrac{\bar u}{u_e}\right)\,\d{\bar y}, \quad 
  \delta_2 = \int_0^{+\infty}\dfrac{\bar u}{u_e}\left(1-\dfrac{\bar u}{u_e}\right)\,\d{\bar y}.
\end{equation*}

\begin{prop}\label{VKcond}
  The evolution of the displacement and momentum thicknesses are ruled by the so-called von K\'arm\'an equation:
  \begin{equation}\label{Equedelta}
    \partial_t(u_e\delta_1) + u_e\delta_1\partial_xu_e + \partial_x(u_e^2\delta_2) = \bar\tau_b,
  \end{equation}
  where $\bar\tau_b$ denotes the parietal constraints:
  \begin{equation}\label{parietal}
    \bar\tau_b := \partial_{\bar y}\bar u|_{\bar y=0} = \frac{\tau_b}{\bar\delta}.
  \end{equation}
\end{prop}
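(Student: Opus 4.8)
The plan is to follow the classical momentum-integral strategy: rewrite the Prandtl momentum balance \eqref{Eccl2} in divergence form, integrate it across the viscous layer $\bar y\in[0,\bar\eta]$, and recognise the velocity-defect integrals that define $\delta_1$ and $\delta_2$.

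First I would pin down the external forcing. Writing $P:=-\partial_{\bar x}\bar p-f_b'/Fr^2$ for the pressure/topography term on the right of \eqref{Eccl2}, the hydrostatic law \eqref{Eccl3} shows $\bar p$ is affine in $\bar y$, so $P$ does not depend on $\bar y$. Moreover, since the Prandtl shift \eqref{PrandtlShift} gives $\bar p(\bar x,\bar y)=p(x,\bar\delta\bar y+f_b)$ with $\bar x=x$, the chain rule together with \eqref{Hydrop} yields $\partial_{\bar x}\bar p=\partial_x p-f_b'/Fr^2$, whence $P=-\partial_x p$. Proposition \ref{eqUe}, equation \eqref{eq:FP}, then identifies this with the external acceleration, $P=\partial_tu_e+u_e\partial_xu_e$, a quantity independent of $\bar y$. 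This is the bridge between the inviscid layer and the forcing felt inside the viscous layer.

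Next I would put the inertial terms of \eqref{Eccl2} in conservative form. Using incompressibility \eqref{Eccl1}, $\bar u\partial_{\bar x}\bar u+\bar v\partial_{\bar y}\bar u=\partial_{\bar x}(\bar u^2)+\partial_{\bar y}(\bar u\bar v)$, so the momentum equation reads $\partial_t\bar u+\partial_{\bar x}(\bar u^2)+\partial_{\bar y}(\bar u\bar v)=P+\partial_{\bar y}^2\bar u$. Integrating in $\bar y$ from $0$ to $\bar\eta$, I would evaluate the elementary boundary contributions: $\bar v(0)=0$ by \eqref{Eccl4}, and $\int_0^{\bar\eta}\partial_{\bar y}(\bar u\bar v)\d{\bar y}=u_e\bar v(\bar\eta)$ with $\bar v(\bar\eta)=-\int_0^{\bar\eta}\partial_{\bar x}\bar u\d{\bar y}$ obtained by integrating \eqref{Eccl1}; the viscous term gives $\int_0^{\bar\eta}\partial_{\bar y}^2\bar u\d{\bar y}=\partial_{\bar y}\bar u|_{\bar\eta}-\partial_{\bar y}\bar u|_0=-\bar\tau_b$, because the free-surface condition $\partial_yu|_\eta=0$ from \eqref{ApproxStress} transfers to $\partial_{\bar y}\bar u|_{\bar\eta}=0$ while $\partial_{\bar y}\bar u|_0=\bar\tau_b$ by \eqref{parietal}; finally $\int_0^{\bar\eta}P\d{\bar y}=\bar\eta(\partial_tu_e+u_e\partial_xu_e)$ by the previous step.

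The decisive manoeuvre --- and the step I expect to be the most delicate --- is converting the integrated equation into the compact form \eqref{Equedelta} using Leibniz's rule on the moving upper limit $\bar\eta(t,x)=h/\bar\delta$. The key observation is that the matching condition \eqref{Eccl5} makes the defect $u_e-\bar u$ vanish at $\bar y=\bar\eta$, so both integrands $u_e-\bar u$ and $\bar u(u_e-\bar u)$ vanish there and the boundary terms produced by differentiating under the integral sign cancel. Concretely, $\bar\eta\,\partial_tu_e-\int_0^{\bar\eta}\partial_t\bar u\d{\bar y}=\partial_t\!\int_0^{\bar\eta}(u_e-\bar u)\d{\bar y}=\partial_t(u_e\delta_1)$, using the integral form $u_e\delta_1=\int_0^{\bar\eta}(u_e-\bar u)\d{\bar y}$ coming from \eqref{tildeUfctue}. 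For the spatial part I would substitute $-u_e\bar v(\bar\eta)=\int_0^{\bar\eta}u_e\partial_{\bar x}\bar u\d{\bar y}$ and verify, again by Leibniz and the vanishing of the defect at $\bar\eta$, that $\bar\eta u_e\partial_xu_e-\int_0^{\bar\eta}\partial_{\bar x}(\bar u^2)\d{\bar y}+\int_0^{\bar\eta}u_e\partial_{\bar x}\bar u\d{\bar y}$ reorganises exactly into $\partial_x(u_e^2\delta_2)+u_e\delta_1\partial_xu_e$, with $u_e^2\delta_2=\int_0^{\bar\eta}\bar u(u_e-\bar u)\d{\bar y}$ from \eqref{tildeUfctue}--\eqref{usquarefctue}. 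Collecting all contributions leaves precisely $\partial_t(u_e\delta_1)+u_e\delta_1\partial_xu_e+\partial_x(u_e^2\delta_2)=\bar\tau_b$. The bookkeeping in this last matching of integrands is the part most prone to sign and algebra slips; everything else is routine.
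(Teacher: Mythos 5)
Your proposal is correct and follows essentially the same momentum-integral strategy as the paper: identify the pressure/topography forcing with the ideal-fluid acceleration $\partial_t u_e + u_e\partial_x u_e$, integrate the Prandtl momentum balance across the viscous layer, and use the matching condition $\bar u|_{\bar y=\bar\eta}=u_e$ to make the defect integrals $u_e\delta_1$ and $u_e^2\delta_2$ emerge with vanishing Leibniz boundary terms. The only difference is organizational --- the paper subtracts the inviscid equation from the Prandtl equation pointwise and then integrates the resulting defect equation (handling the $\bar v\partial_{\bar y}\bar u$ term by an integration by parts in $\bar y$), whereas you integrate the conservative form first and reconstruct the defects afterwards --- and your write-up is, if anything, more explicit about the moving upper limit $\bar\eta(t,x)$ and about the use of the free-surface condition $\partial_{\bar y}\bar u|_{\bar y=\bar\eta}=0$ in evaluating the viscous term.
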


\begin{proof}
  First we notice that the Prandtl shift leads to following relations
  \begin{equation*}
    \partial_{x}=\partial_{\bar x}-\frac{f_b'}{\bar \delta}\partial_{\bar y}, \quad
    \partial_{y}=\dfrac1{\bar\delta}\partial_{\bar y}.
  \end{equation*}
  Momentum equation \eqref{eq:FP} for inviscid flow can be rewritten as
  \begin{equation*}
    \partial_{\bar t} \bar u_e + \bar u_e\partial_{\bar x}\bar u_e 
    = -\partial_{\bar x}\bar p + \dfrac{f'_b}{\bar\delta}\partial_{\bar y}\bar p
    = -\partial_{\bar x}\bar p-\dfrac{f'_b}{Fr^2},
  \end{equation*}
  in which we have used \eqref{Eccl3} to rewrite the right-hand side. The difference between this equation and (\ref{Eccl2}) gives
  \begin{equation*}
    \partial_{\bar t}(\bar u_e-\bar u) + \bar u_e\partial_{\bar x}\bar u_e - \bar u \partial_{\bar x}\bar u 
    - \bar v\partial_{\bar y}\bar u = -\partial_{\bar y}^2\bar u.
  \end{equation*}
  Through (\ref{Eccl1}) and (\ref{Eccl4}), we can rearrange the term $\bar v=-\int_{0}^{\bar y}\partial_{\bar x}\bar u$. Furthermore we get
  \begin{equation*}
    \partial_{\bar t}(\bar u_e-\bar u)+(\bar u_e-\bar u)\partial_{\bar x}\bar u_e
    + \bar u \partial_{\bar x}(\bar u_e-\bar u)
    + \partial_{\bar y}\bar u\int_{0}^{\bar y}\partial_{\bar x}\bar u\,\d{\bar y}
    =-\partial_{\bar y}^2\bar u.
  \end{equation*}
  Using integration by parts, the last term in the left-hand side can be rewritten as
  \begin{equation*}
    \partial_{\bar y}\bar u\int_{0}^{\bar y}\partial_{\bar x}\bar u\,\d{\bar y}
    = -\bar u\partial_{\bar x}\bar u + 
    \partial_{\bar y}\left(\bar u\int_{0}^{\bar y}\partial_{\bar x}\bar u\,\d{\bar y}\right).
  \end{equation*}
  Now we integrate the resulting equation over $\bar y$, between $0$ and $\bar\eta$, together with the matching boundary condition \eqref{Eccl5} to obtain the momentum integral equation
  \begin{align*}
    \partial_{\bar t}\int_{0}^{\bar\eta}(\bar u_e-\bar u)\,\d{\bar y}     + \partial_{\bar x} \bar u_e\int_{0}^{\bar\eta}(\bar u_e-\bar u)\,\d{\bar y} + \int_{0}^{\bar\eta}\bar u\partial_{\bar x}(\bar u_e-\bar u)\,\d{\bar y} & -\int_{0}^{\bar\eta}\bar u\partial_{\bar x}\bar u \,\d{\bar y} + u_e\int_{0}^{\bar\eta}\partial_{\bar x}\bar u\,\d{\bar y} \\
    & = \partial_{\bar y}\bar u|_{\bar y=0}.
  \end{align*}
  The last three terms of the left-hand side are now rewritten as 
  $\partial_{\bar x}\int_{0}^{\bar\eta}\bar u(\bar u_e-\bar u)\,\d{\bar y}$. Moreover, since $u_e$ is independent of $\bar y$, we have the relations
  \begin{equation*}
    u_e\delta_1=\int_{0}^{(\eta-f_b)/\delta} (u_e-\bar u)\,\d{\bar y}, \quad 
    u_e^2\delta_2=\int_{0}^{(\eta-f_b)/\delta}\bar u(u_e-\bar u)\,\d{\bar y}.
  \end{equation*}
  Since $t=\bar t$ and $x = \bar x$ in the Prandtl shift, and all unknowns in the equation are independent on $\bar y$, we can drop the bar symbols in the derivatives. From definition \eqref{parietal} we obtain the final form \eqref{Equedelta} of the von K\'arm\'an equation.   
  Finally, the relation between the friction $\tau_b$ and the rescaled one $\bar\tau_b$ in \eqref{parietal} is an easy consequence of the Prandtl shift and definition \eqref{eq:def-flux-friction}.
\end{proof}

Coupled with equation \eqref{eq:FP} on the velocity $u_e$ of ideal fluid, the von K\'arm\'an equation \eqref{Equedelta} 
 gives only a partial representation of the boundary layer, since it involves four additional unknowns, namely $u_e$, $\delta_1$,
$\delta_2$, and $\bar\tau_b$. To proceed further towards an integrated model, we need to specify velocity profiles to close the von K\'arm\'an equation.

%%%%%%%%%%%%%%%%%%%%%%%%%
\subsection{Velocity profile in the viscous layer}\label{sec:ViscProf}
Through the viscous layer, the velocity $\bar u$ varies from $0$ (at the bottom) to the  ideal fluid velocity $u_e$. Therefore we introduce a profile function $\varphi$ as well as 
a scaling factor $\Delta(\bar t, \bar x)$, chosen in such a way that $\Delta$ quantifies the physical thickness of the viscous layer. Following \cite{Schlichting1968}, we wish to have
\begin{equation}\label{eq:visous-profile-function}
  \frac{\bar u(\bar t,\bar x,\bar y)}{u_e} = \varphi\left(\frac{\bar y}{\Delta}\right) = \varphi(\xi), \quad \xi := \frac{\bar y}{\Delta(\bar t,\bar x)}.
\end{equation}
Therefore we choose $0 < \Delta\le \bar\eta$, and a profile function $\phi(\xi)$ such that
\begin{equation}\label{profileProp}
  \varphi(0) = 0,\quad
  \varphi(\xi \geq 1) = 1,
  \quad \int_0^1(1-\phi)\d{\xi}:= \alpha_1<+\infty,
  \quad \int_0^1\phi (1-\phi)\d{\xi}:= \alpha_2<+\infty.
\end{equation}
Hence, by definition of $\delta_1$ and $\delta_2$, we can write
\begin{align*}
  \delta_1 & = \int_{0}^{\bar\eta}\left(1-\frac{\bar u}{u_e}\right)\,\d{\bar y}
  =\Delta\int_0^1(1-\phi)\d{\xi} =\Delta \alpha_1, \\
  \delta_2 & = \int_{0}^{\bar\eta}\frac{\bar u}{u_e}\left(1-\frac{\bar u}{u_e}\right)\,\d{\bar y} = \Delta\int_0^1 \varphi(1-\varphi)\d{\xi} 
  =\Delta \alpha_2.
\end{align*}

To link these variables, we introduce the {\em shape factor} $H$ which only depends on the profile function $\varphi$
\begin{equation}\label{delta2fnctdelta1}
  H := \frac{\delta_1}{\delta_2} = \frac{\int_0^1(1-\phi)\d{\xi}}{\int_0^1\varphi(1-\varphi)\d{\xi}} \geq 1.
\end{equation}
The parietal constraints can also be expressed in terms of $\varphi$ and $u_e$ as
\begin{equation}\label{parietalwithprofile}
  \bar\tau_b=\partial_{\bar y}\bar u|_{\bar y=0} = \frac{\varphi'(0)}{\Delta}u_e = \dfrac{\alpha_1\varphi'(0)}{\delta_1}u_e = \dfrac{f_2 H}{\delta_1}u_e,
\end{equation}
where the parameter $f_2$ is known as the {\em friction factor} (see \cite{Schlichting1968,Lagree2005}) and $f_2H := \alpha_1\varphi'(0)$. %\blue{OK}

Using definitions \eqref{delta2fnctdelta1} and \eqref{parietalwithprofile}, we can rewrite the Von K\'arm\'an equation (\ref{Equedelta}) in following form
\begin{equation}\label{VKdelta1}
  \partial_t(u_e\delta_1)+u_e\delta_1\partial_xu_e+\partial_x\left(\frac{u_e^2\delta_1}{H}\right)=\dfrac{f_2 H}{\delta_1}u_e.
\end{equation}
At this stage, choosing a velocity profile in the viscous layer amounts to impose a closure formula on the shape factor $H$ and the friction factor $f_2$. Once this is done, the von K\'arm\'an will be given 
a closed form, in terms of $u_e$ and the displacement thickness $\delta_1$.

Several shapes can be used for the profile, including turbulent ones. As far as laminar profiles are concerned, we refer to \cite[ch.X]{Schlichting1968} for elements of comparisons between different profiles. We shall assume that $\varphi$ depends solely on the variable $\xi$ according to the {\em similarity principle} \cite{Schlichting1968} on velocity profile over a flat plane at zero-incidence. For the sake of clarity, let us briefly present in what follows several classical profile functions in viscous layer from which we establish some instructive laws on $H$ and $f_2$ for our study. 

\medskip
\noindent{\bf Pohlhausen polynomial profile.} This kind of approach is known as the Pohlhausen solution which consists in considering a polynomial approximation of velocity profile. The polynomial coefficients are chosen such that $\varphi(\xi)$ verifies boundary condition \eqref{profileProp}. The first and also the simplest case consists in a linear profile
\begin{equation*}
  \varphi(\xi) = \xi, \quad H = 3, \quad f_2 = 0.167.
\end{equation*}
Higher order polynomial can be derived by imposing additional conditions $0 = \varphi'(1) = \varphi''(1) = \cdots$ which mean that the transition between the viscous layer and the inviscid layer must be smooth. Thus, the second and third-order profiles write
\begin{align*}
  \varphi(\xi) & = 2\xi - \xi^2, \quad H = 2.5, \quad f_2 = 0.267, \\
  \varphi(\xi) & = \frac{3}{2}\xi - \frac{1}{2}\xi^3, \quad H = 2.7, \quad f_2 = 0.208.
\end{align*}
We notice that although these profiles are quite different, both lead to very close values of $H,f_2$. Nevertheless, they do not allow the observation of separation for decelerated flows. This difficulty can be overcome only by using a fourth-order polynomial with a free parameter $\Lambda$. The resulting profile function, named Pohlhausen4 in the following, takes the form
\begin{equation*}
  \varphi(\xi) = (2\xi - 2\xi^3 + \xi^4) + \frac{\Lambda}{6}\xi(1 - \xi)^3.
\end{equation*}
The parameter $\Lambda$ is related to the pressure gradient, and therefore to the variation of velocity of the inviscid flow. Indeed, taking the second-order derivative $\varphi''(0)$ and by evaluating Prandtl 
momentum equation \eqref{Eccl2} at $\bar y = 0$ we deduce 
\begin{equation*}
  \Lambda = - \varphi''(0) = -\frac{\Delta^2}{u_e}\partial_{\bar y}^2\bar u|_{\bar y = 0} = -\frac{\Delta^2}{u_e}\partial_x p = \Delta^2\partial_x u_e,
\end{equation*}
where the last equality is the stationary version of the momentum equation \eqref{eq:Emomentum} in the inviscid layer. Since by definition $\varphi(\xi) \leq 1$, we have $\Lambda \leq 12$, and we emphasize that for $\Lambda \leq -12$, the velocity profiles exhibit negative regions that correspond to reverse flow (see Figure \ref{fig:Pohlhausen-profile}, left side). 
\begin{figure}[!ht]
  \centering
    \includegraphics[width=0.49\linewidth]{./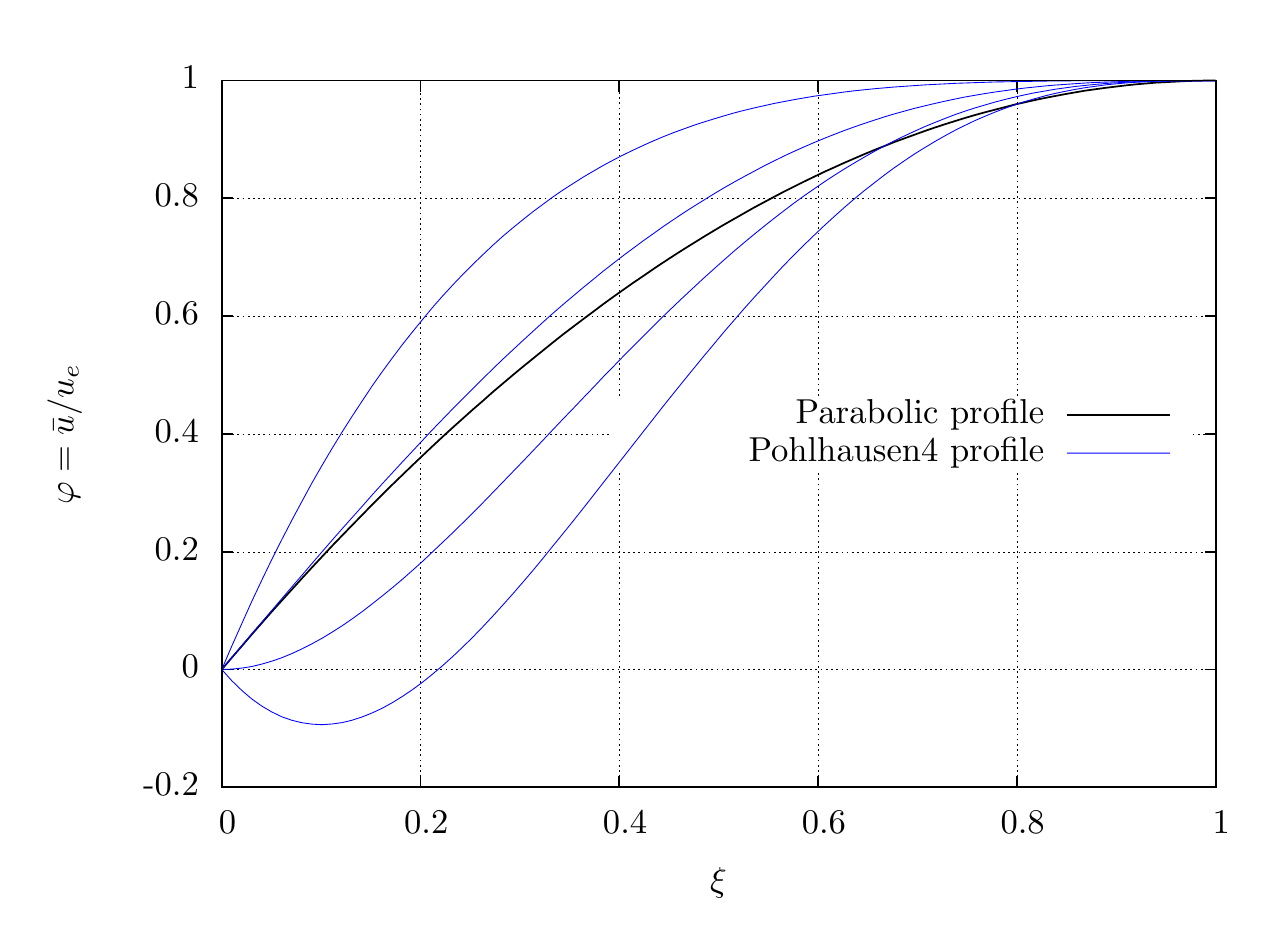}
    \includegraphics[width=0.49\linewidth]{./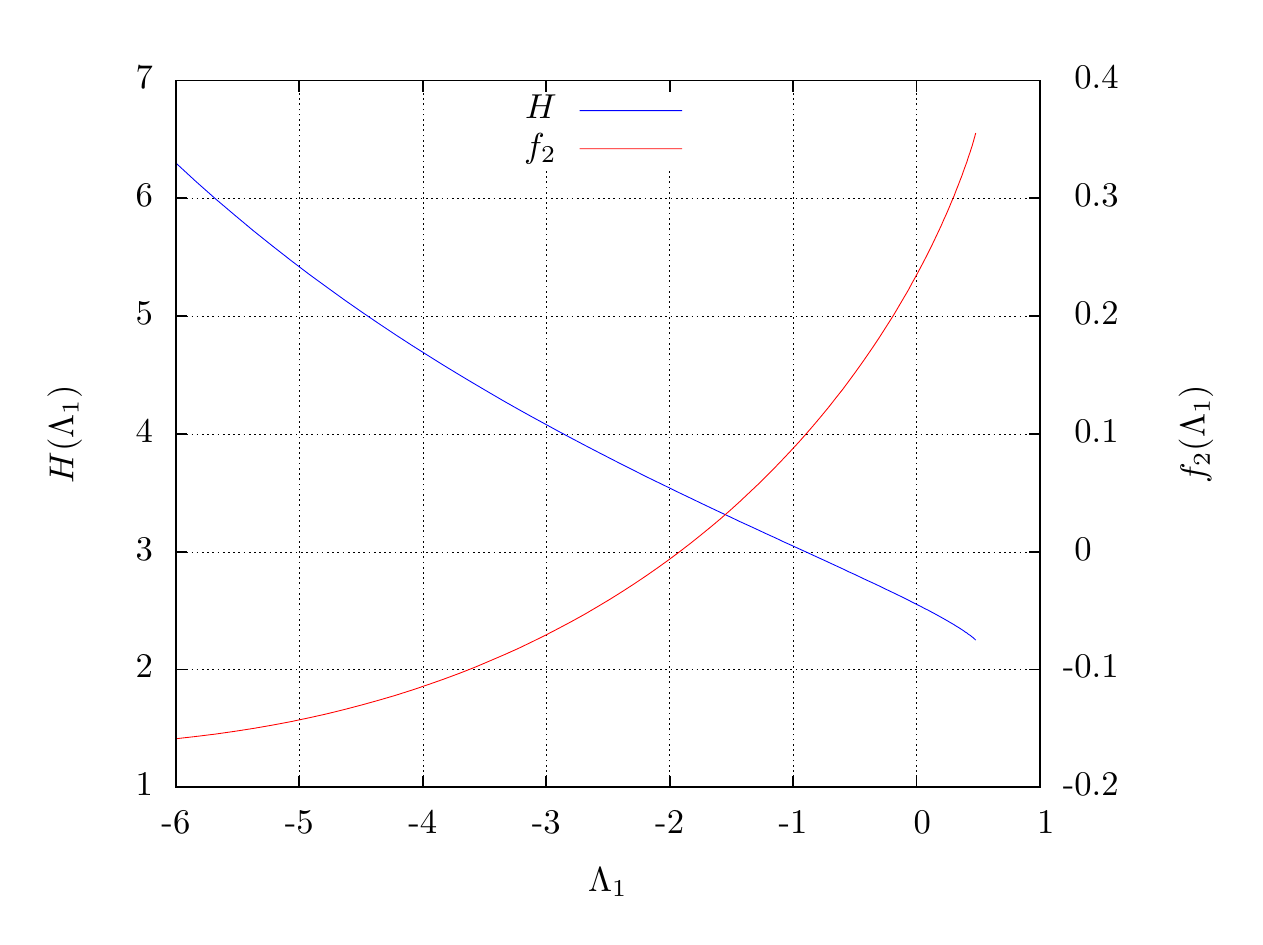}
    \caption{{\small Polynomial approximation of the velocity profile: (left) parabolic (black) vs Pohlhausen of order 4 profiles with $\Lambda = 12,0,-12,-24$ (blue); (right) closure on the shape factor $H$ and and the friction $f_2$ factors based on Pohlhausen of order 4. Note that reverse flows $f_2<0$ are possible (color online).}}
    \label{fig:Pohlhausen-profile}
 \end{figure}

We turn now to study the shape and friction factors based on the profile under consideration. First, substituting the fourth-order polynomial into definitions \eqref{delta2fnctdelta1} and \eqref{parietalwithprofile} allows to express $H,f_2$ as explicit functions of $\Lambda$, but they are omitted here for the sake of compactness. Nevertheless we notice that these relations are just formal since the "physical thickness" $\Delta(\bar t,\bar x)$ remains unknown once the velocity $\bar u$ in viscous layer has not yet been solved. In practice, it is more convenient to replace $\Delta$ by the displacement thickness $\delta_1$. A possible way, according to   \cite{Lagree2005,Lagree2005a}, is to introduce a new parameter $\Lambda_1$, inspired from the definition of $\Lambda$, given by
\begin{equation}\label{eq:Lambda1}
  \Lambda_1 := \delta_1^2 \partial_x u_e = \left(\frac{36-\Lambda}{120}\right)^2\Delta^2\partial_x u_e = \left(\frac{36-\Lambda}{120}\right)^2\Lambda
\end{equation}
in which we have used the relation $\delta_1/\Delta = (36-\Lambda)/120$ obtained by substituting the fourth-order polynomial into definition of $\delta_1$. Moreover, equation \eqref{eq:Lambda1} leads to $\Lambda_1$ being monotone on the physical range $\Lambda \leq 12$. Consequently, the factors $H,f_2$ can also be expressed as functions of $\Lambda_1$. We represent on Figure \ref{fig:Pohlhausen-profile}, on the right side, the functions $H(\Lambda_1)$ and $f_2(\Lambda_1)$ with $\Lambda$ ranging from $-24$ to $12$ that corresponds to $-6 \leq \Lambda_1 \leq 0.48$. Finally, we present in Tab. \ref{tab:specific-Pohlhausen-solution} values of $H$ and $f_2$ corresponding to special cases: $\Lambda = 12$ (limit of physical range), $\Lambda = 0$ (no pressure gradient namely Blasius solution) and $\Lambda = -12$ (incipient separation).
\begin{table}[!ht]
  \centering
    \begin{tabular}{|c|c|c|c|c|}
      \hline
      Case & $\Lambda$ & $\Lambda_1$ & $H$ & $f_2$ \\
      \hline
      Limit case & 12 & 0.48 & 2.25 & 0.356 \\
      \hline 
      Blasius case & 0 & 0 & 2.554 & 0.235 \\ 
      \hline
      Incipient separation & -12 & -1.92 & 3.5 & 0 \\
      \hline
    \end{tabular}
    \caption{Specific solutions of Pohlhausen of order four profile.}
  \label{tab:specific-Pohlhausen-solution}
\end{table}

\medskip
\noindent{\bf Falkner Skan profile.} Polynomial profiles, despite their simplicity,  are rather artificial. 
Their construction is based only on some suitable boundary conditions. An alternative approach, that might be more interesting, is to use exact solutions of boundary layer equations in order to establish more physical closures. Such an approach can be done by employing the solution to Falkner-Skan equation \cite{Falkner1931}. It plays an important role to illustrate the main physical features of boundary layer phenomena. This solution describes the form of an external laminar boundary layer of a flow over a wedge. The Blasius solution for a flat plate is a particular case of this solution. Falkner-Skan equation consists of a third-order boundary value problem whose resolution is still complicate (see e.g. \cite{Cebeci1971, Zhang2009b, Keshtkar2013}). 

We do not present here any details on the resolution of Falkner-Skan equation but focus on the construction of closure formul\ae\ and compare the obtained results with those given by Pohlhausen4. First, we solve the Falkner-Skan equation on the whole physical range of pressure gradient, corresponding to the case of accelerated, decelerated and reverse flows, to obtain all the values of the triplet $(\Lambda_1, H, f_2)$. Next, we find out a numerical relation between these parameters by inspiring from the approach presented for Pohlhausen4. On Figure \ref{fig:Falkner-Skan-profile}, it is found that the Pohlhausen4 closure, although its purely algebraic derivation, presents a good agreement with Falkner-Skan when $\Lambda_1 \geq 0$ corresponding to accelerated flows. In particular, exact value of Blasius solution $(\Lambda_1 = 0, H = 2.59, f_2 = 0.22)$ is very close to that given by Pohlhausen4, see again Tab. \ref{tab:specific-Pohlhausen-solution}. However,  these closures diverge in regions of decelerated and reverse flows $(\Lambda_1 < 0)$. Incipient separation ($f_2=0$) is reached at $(\Lambda_1 = -1.09, H = 4)$ while it is $(\Lambda_1 = -1.92, H = 3.5)$ for Pohlhausen of order 4. Finally, a value $(\Lambda_1 = 0.6,H = 2.074)$ is found as limit of physical range of Falkner-Skan solution.
\begin{figure}[!ht]
  \begin{center}
    \includegraphics[width=0.49\linewidth]{./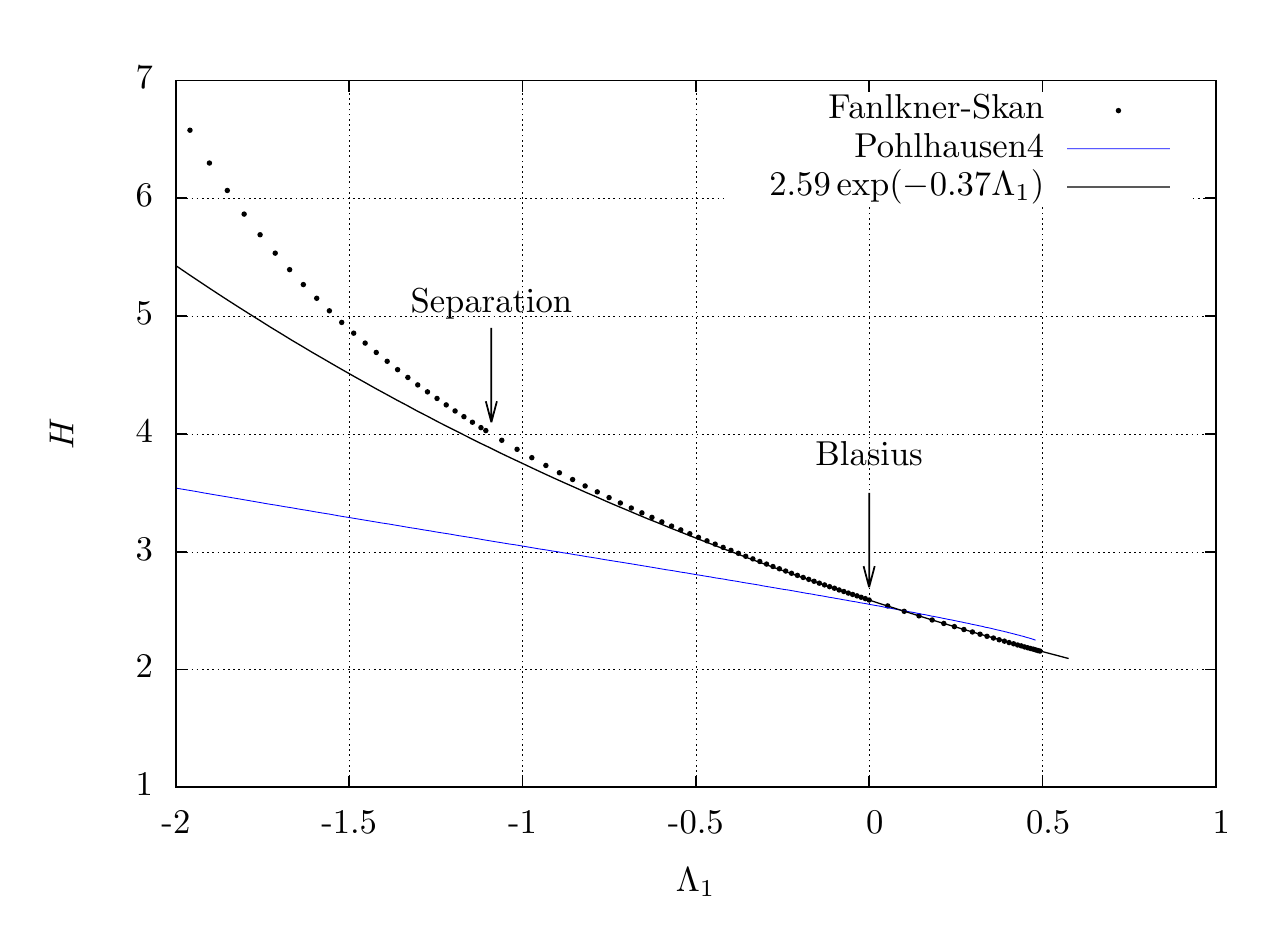}
    \includegraphics[width=0.49\linewidth]{./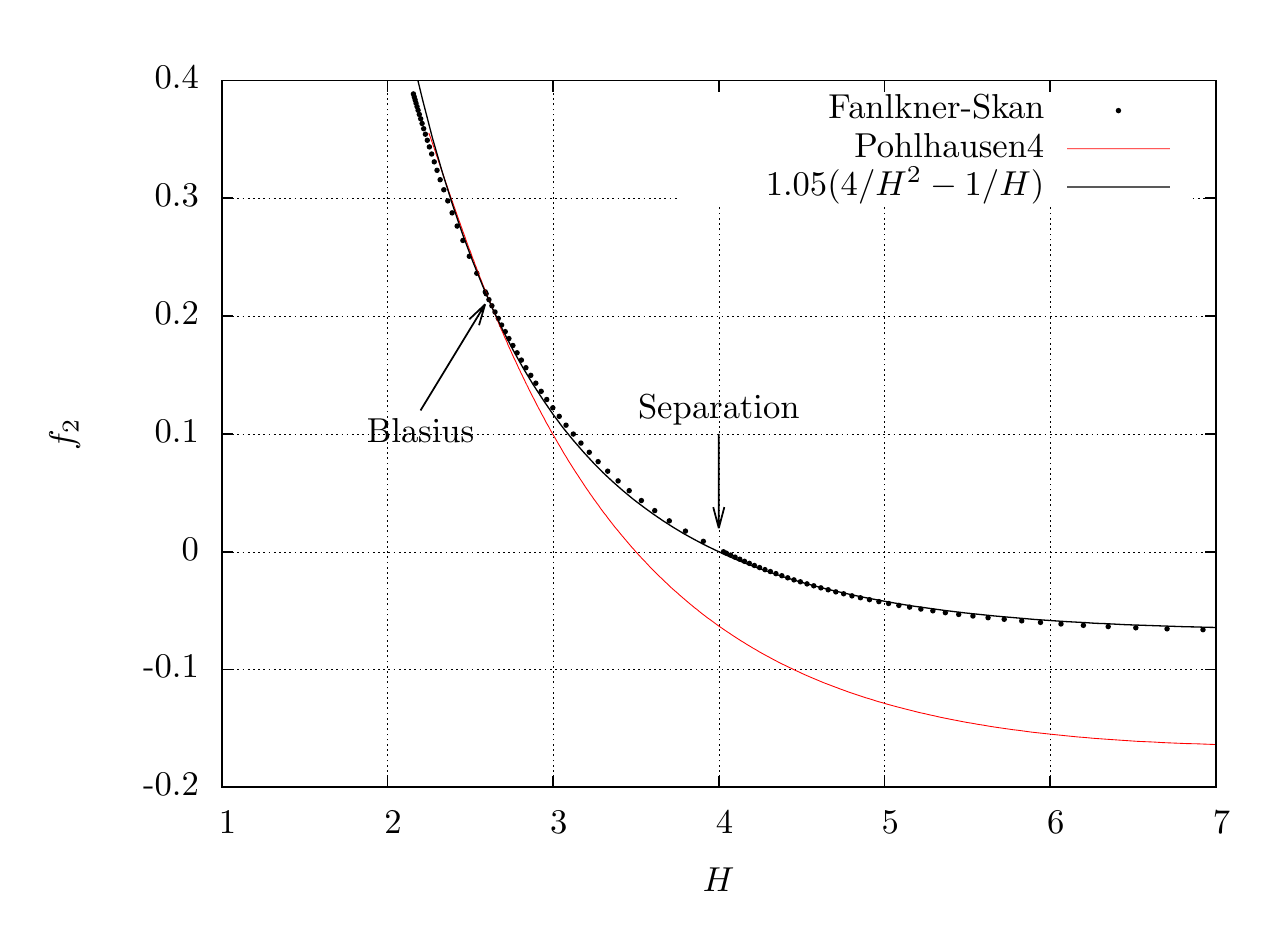}
    \caption{Falkner-Skan vs Pohlhausen order 4 closures: shape factor $H$ (left) as a function of 
     $\Lambda_1 = \delta_1^2 \partial_x u_e$ and friction factor $f_2$ (right) as a function of $H$.}
    \label{fig:Falkner-Skan-profile}
  \end{center}  \end{figure}

Lagr\'ee and Lorthois \cite{Lagree2005} proposed the following {\em ad-hoc} closure based on a fitting of Falkner-Skan solution
\begin{equation}\label{eq:Falkner-Skan-closure}
  H = \left\{
  \begin{array}{ll} 
    2.59 e^{-0.37\Lambda_1} & \text{if } \Lambda_1 < 0.6, \\
    2.074 & \text{otherwise},%\text{if } \Lambda_1 \ge 0.6,
  \end{array} \right.
  \text{and } f_2 = 1.05\left(\frac{4}{H^2}-\frac{1}{H}\right).
\end{equation}

As we can see on figure \ref{fig:Falkner-Skan-profile}, this numerical law presents a good agreement near Blasius solution (both accelerated and decelerated flows). These regions are also the most concerned cases in river hydraulic application (i.e. with small bed perturbation). Especially, when plotting $f_2$ as function of $H$, an excellent agreement is found. This is why we adopt \eqref{eq:Falkner-Skan-closure} for our numerical study of the present model. 

Finally, it is important to emphasize that both polynomial and numerical closure for $H$ and $f_2$ are based on steady solutions of boundary-layer equations.

%%%%%%%%%%%%%%%%%%%%%%%%%%%%%%%%%%%%%%%%%%%%%%%%%%%%%%%%%%%%%
\section{Extended shallow water model}\label{sec:ESW}
%%%%%%%%%%%%%%%%%%%%%%%%%%%%%%%%%%%%%%%%%%%%%%%%%%%%%%%%%%%%%

We are now in position to obtain the extended model we are looking for. Depth-integration the mass and momentum conservation equations of RNSP system yields shallow water equations \eqref{eq:sw-mass} and \eqref{eq:sw-momentum}, as presented in Sec. \ref{subsec:SW}. Compared to that of classical shallow water model \cite{Gerbeau01}, we have noticed, on the  one hand, the dependence in $\delta_1, \delta_2$ of the momentum equation, and on the other hand, the rise of parietal constraints in the right-hand side at order $1$ in $\bar\delta$. This motivates a new closure for the momentum flux and so the system has to be coupled with the von K\'arm\'an equation presented above.

\subsection{Towards the extended model} \label{ESW}
%%%%%%%%%%%%%%%%%%%%%%%%%%%%%%%%%%%%%%%%%%%%%%%%%%%%%%%%%%%%%
This section is devoted precisely to the coupling between depth-integrated shallow water equations and the von K\'arm\'an equation. It merely emphasizes that relation \eqref{usquarefctue} does not give any closure for the momentum flux.
This is  achieved by obtaining a closure on the momentum thickness $\delta_2$, through  the study of the viscous layer, as we did above in Section \ref{sec:ViscProf}. In what follows, we show that expression \eqref{usquarefctue} for the momentum flux is in fact the most convenient, since it takes into account the effect of the viscous layer, and we clarify clarify the role of the von K\'arm\'an equation. To this end, we start from the system of depth-integrated equations \eqref{eq:sw-mass}, \eqref{eq:sw-momentum} and ideal fluid equation \eqref{eq:FP}. We rewrite the momentum equation \eqref{eq:sw-momentum} with a generic form of the flux together with relation \eqref{parietal} on parietal constraints:
%\begin{align}
%  \label{eq:mass}\partial_th+\partial_x(hU) &= 0\\
%  \label{eq:moment}\partial_t(hU)+\partial_x J 	
%  &= -h\partial_xp-\bar \delta\tau\\
%  \label{eq:FP}\partial_tu_e+u_e\partial_xu_e &= -\partial_xp,
%\end{align}
\begin{equation}\label{eq:sw-moment2}
\partial_t(hU)+\partial_x J = -h\partial_xp-\bar \delta \bar\tau_b,
\end{equation}
where $J$ is the momentum flux for which we seek a closure. We evidence now the fact that a convenient definition of $J$ allows to recover the von K\'arm\'an
equation from this system of integrated equations.

\begin{prop}\label{Close-flux}
  Let $(h,U,u_e,J)$ be solution to \eqref{eq:FP}, \eqref{eq:sw-mass} and \eqref{eq:sw-moment2}.
  Assume $\delta_1$ is defined by \eqref{tildeUfctue}. Then $\delta_1$ and $\delta_2$ solve the von K\'arm\'an equation \eqref{Equedelta}
  if and only if there holds
  \begin{equation}\label{eq:fluxJ}
    J = \big(h - \bar\delta(\delta_1+\delta_2)\big)u_e^2.
  \end{equation}
\end{prop}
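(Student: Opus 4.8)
The plan is to derive a single algebraic identity linking the von Kármán residual to the discrepancy between the actual flux $J$ and the candidate flux $J^\ast := \big(h-\bar\delta(\delta_1+\delta_2)\big)u_e^2$, and then to read off both implications from it. To set up, I write $m := hU$ and record two rewritings. First, the definition \eqref{tildeUfctue} of $\delta_1$ gives $\bar\delta u_e\delta_1 = hu_e - m$, which lets me trade every occurrence of $u_e\delta_1$ for the conservative variables. Second, using the same relation the target flux becomes $J^\ast = m u_e - \bar\delta u_e^2\delta_2$, so that establishing \eqref{eq:fluxJ} is the same as proving $J = mu_e - \bar\delta u_e^2\delta_2$. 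I then introduce the von Kármán residual $R := \partial_t(u_e\delta_1) + u_e\delta_1\partial_x u_e + \partial_x(u_e^2\delta_2) - \bar\tau_b$, so that \eqref{Equedelta} holds precisely when $R = 0$.

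The core step is to compute $\bar\delta R$ by feeding in the three governing equations in turn. Multiplying $R$ by $\bar\delta$ and substituting $\bar\delta u_e\delta_1 = hu_e - m$ turns its first two terms into $\partial_t(hu_e - m) + (hu_e - m)\partial_x u_e$. Expanding with the product rule produces a term $u_e\partial_t h$, which I eliminate via the mass equation \eqref{eq:sw-mass} (so $\partial_t h = -\partial_x m$) and combine with $-m\partial_x u_e$ into $-\partial_x(mu_e)$; it also produces the combination $h(\partial_t u_e + u_e\partial_x u_e)$, which the ideal-fluid equation \eqref{eq:FP} collapses to $-h\partial_x p$. Finally I use the momentum equation \eqref{eq:sw-moment2} to replace $\partial_t m$ by $-\partial_x J - h\partial_x p - \bar\delta\bar\tau_b$. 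After the $h\partial_x p$ and $\bar\delta\bar\tau_b$ contributions cancel in pairs, everything organizes into a single divergence and I expect to arrive at
\[
  \bar\delta R = \partial_x\big(J - J^\ast\big).
\]

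With this identity the forward implication is immediate: if \eqref{eq:fluxJ} holds then $J = J^\ast$, the right-hand side vanishes, and since $\bar\delta \neq 0$ we get $R = 0$, i.e.\ the von Kármán equation \eqref{Equedelta}. Conversely, if \eqref{Equedelta} holds then $R = 0$, whence $\partial_x(J - J^\ast) = 0$. The one genuinely delicate point sits here: the dynamics constrain $J$ only through its spatial derivative in \eqref{eq:sw-moment2}, so at this stage $J - J^\ast$ is merely known to be independent of $x$, i.e.\ a function of $t$ alone. I would close the reverse direction by fixing this integration constant through the boundary or asymptotic data, where the flux and the viscous-layer thicknesses are prescribed consistently, forcing $J = J^\ast$ everywhere. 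The computation itself is routine chain-rule bookkeeping; the only real care is in tracking the product-rule expansions so that all pressure and friction terms cancel, and in handling this integration-constant subtlety for the ``only if'' part.
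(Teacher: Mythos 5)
Your proposal is correct and follows essentially the same route as the paper: the same substitutions of \eqref{tildeUfctue}, \eqref{eq:sw-mass}, \eqref{eq:FP} and \eqref{eq:sw-moment2} collapse everything into $\partial_x\big(J-hu_eU+\bar\delta\delta_2u_e^2\big)=0$, and the paper likewise fixes the integration constant by requiring the flux to vanish with the velocity. Packaging both implications into the single identity $\bar\delta R=\partial_x(J-J^\ast)$ is a tidy reorganization (and you are right that the "constant" is a priori a function of $t$ alone), but it is not a different argument.
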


\begin{proof} We start from the von K\'arm\'an equation and introduce \eqref{tildeUfctue} to obtain
  \begin{equation*}
  \partial_t(hu_e)-\partial_t(hU) + (hu_e-U)\partial_xu_e + \bar\delta\partial_x(u_e^2\delta_2) = \bar\delta\bar\tau_b.
  \end{equation*}
  To this equation we add \eqref{eq:sw-moment2}, the parietal term disappears, leading to
  \begin{equation*}
  \partial_t(hu_e) + \partial_xJ+hu_e\partial_xu_e-hU\partial_xu_e+\bar\delta\partial_x(u_e^2\delta_2) = -h\partial_xp.
  \end{equation*}
  Developing the time derivative and simplifying with \eqref{eq:FP} we obtain
  \begin{equation*}
  u_e\partial_th+\partial_xJ-hU\partial_xu_e+\partial_x\big(\bar\delta u_e^2\delta_2\big) = 0.
  \end{equation*}
  Finally, we use \eqref{eq:sw-mass} to eliminate the time derivative, regroup terms and get
  \begin{equation*}
  \partial_x\big(J-hu_eU+\bar\delta\delta_2u_e^2\big) = 0,
  \end{equation*}
  so that, up to a constant which can be taken equal to zero by considering that the flux is zero when the velocity is zero, we have
  \begin{equation}\label{eq:fluxJ2}
    J = hu_eU - \bar\delta\delta_2u_e^2,
  \end{equation}
  which together with \eqref{tildeUfctue} gives precisely \eqref{eq:fluxJ}.

  Conversely, we consider \eqref{tildeUfctue} and use successfully the mass and momentum balance equations
  \begin{align*}
    \partial_t(u_e\bar\delta\delta_1) &= \partial_t(hu_e)-\partial_t(hU) = u_e\partial_th+h\partial_t u_e + \partial_x\big(\big(h-\bar\delta(\delta_1+\delta_2)\big)u_e^2\big)+h\partial_xp+\bar\delta\bar\tau_b\\
    &= -u_e\partial_x(hU) - hu_e\partial_xu_e-h\partial_xp+\partial_x(hu_e^2)-\bar\delta\partial_x\big((\delta_1+\delta_2)u_e^2\big)    +h\partial_xp+\bar\delta\bar\tau_b\\
    &=-u_e\partial_x(hu_e)+u_e\partial_x(\bar\delta\delta_1u_e) - hu_e\partial_xu_e + \partial_x(hu_e^2)-\bar\delta\partial_x\big((\delta_1+\delta_2)u_e^2\big) +\bar\delta\bar\tau_b\\
    &= - \bar\delta\left(-u_e\partial_x(\delta_1u_e)+\partial_x((\delta_1+\delta_2)u_e^2)-\bar\tau_b\right).
  \end{align*}
  Noting that $\partial_x(\delta_1u_e^2)=u_e\partial_x(\delta_1u_e)+\delta_1u_e\partial_x u_e$ we recover as required the von K\'arm\'an equation.
\end{proof}

In the above proposition, we only use the three equations \eqref{eq:FP}, \eqref{eq:sw-mass}, \eqref{eq:sw-moment2} and definition (\ref{tildeUfctue}) of the displacement thickness. Replacing the physical definition (\ref{tildeUfctue}) by von K\'arm\'an equation (\ref{Equedelta}) will give back the physical definition in the sense of characteristics as explained in the next proposition.
\begin{prop}\label{Caract-delta}
  Let $J$ be defined by \eqref{eq:fluxJ}, and $(h,U,\delta_1,u_e)$ be a (smooth) solution to the system of equations \eqref{eq:FP}, \eqref{eq:sw-mass} and \eqref{eq:sw-moment2} together with von K\'arm\'an equation
  \eqref{Equedelta}. Denoting by $\delta_1^*$ the thickness obtained using \eqref{tildeUfctue}, we have 
  \begin{equation*}
  \partial_t\left(u_e(\delta_1-\delta_1^*)\right) - u_e\partial_x\left(u_e(\delta_1-\delta_1^*)\right) = 0.
  \end{equation*}
  In other words the error between these displacement thickness is constant along the characteristics of the ideal fluid. Hence, if initially $\delta_1=\delta_1^*$ then
  it is true for all times.
\end{prop}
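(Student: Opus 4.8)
The plan is to mimic the ``conversely'' computation in the proof of Proposition \ref{Close-flux}, the crucial difference being that we must now keep the two displacement thicknesses apart: here $\delta_1^*$ is \emph{defined} by the algebraic relation \eqref{tildeUfctue}, i.e. $u_e\bar\delta\delta_1^* = hu_e - hU$, whereas the $\delta_1$ and $\delta_2$ entering the flux $J$ of \eqref{eq:fluxJ} and the von K\'arm\'an source are the prescribed ones solving \eqref{Equedelta}. Writing $w := u_e(\delta_1-\delta_1^*)$, the goal is to produce an evolution equation for $u_e\delta_1^*$ having \emph{exactly the same shape} as the one satisfied by $u_e\delta_1$, so that upon subtraction the $\delta_2$-flux and the friction $\bar\tau_b$ cancel and only a transport equation for $w$ survives.

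First I would differentiate the defining relation \eqref{tildeUfctue} in time, $\partial_t(u_e\bar\delta\delta_1^*) = \partial_t(hu_e) - \partial_t(hU)$, and substitute $\partial_t(hU)$ from the momentum balance \eqref{eq:sw-moment2}, $\partial_t h$ from the mass equation \eqref{eq:sw-mass}, and $\partial_t u_e$ from the ideal-fluid equation \eqref{eq:FP}. Inserting the explicit flux $J = hu_e^2 - \bar\delta(\delta_1+\delta_2)u_e^2$ and the relation $hU = hu_e - \bar\delta\delta_1^* u_e$, the pressure terms $h\partial_x p$ cancel and the group $-u_e\partial_x(hu_e) + \partial_x(hu_e^2) - hu_e\partial_x u_e$ vanishes identically, exactly as in Proposition \ref{Close-flux}. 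After dividing by $\bar\delta$ this leaves
\begin{equation*}
  \partial_t(u_e\delta_1^*) = u_e\partial_x(u_e\delta_1^*) - \partial_x\big((\delta_1+\delta_2)u_e^2\big) + \bar\tau_b .
\end{equation*}
The point to stress is that the last two terms carry the \emph{prescribed} $\delta_1,\delta_2,\bar\tau_b$, since they come from $J$ and \eqref{eq:sw-moment2}, not from $\delta_1^*$.

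Next I would rewrite the prescribed von K\'arm\'an equation \eqref{Equedelta} in the same form. Using $u_e\delta_1\partial_x u_e = \partial_x(u_e^2\delta_1) - u_e\partial_x(u_e\delta_1)$, equation \eqref{Equedelta} becomes
\begin{equation*}
  \partial_t(u_e\delta_1) = u_e\partial_x(u_e\delta_1) - \partial_x\big((\delta_1+\delta_2)u_e^2\big) + \bar\tau_b .
\end{equation*}
Subtracting the displayed equation for $\delta_1^*$ from this one, the two right-hand-side terms $\partial_x((\delta_1+\delta_2)u_e^2)$ and $\bar\tau_b$ are identical in both and therefore drop out, yielding precisely
\begin{equation*}
  \partial_t\big(u_e(\delta_1-\delta_1^*)\big) - u_e\partial_x\big(u_e(\delta_1-\delta_1^*)\big) = 0 ,
\end{equation*}
which is the claimed transport of $w$. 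The final assertion then follows by the method of characteristics for this linear homogeneous equation: if $w$ vanishes at $t=0$, it vanishes for all time, hence $\delta_1=\delta_1^*$ wherever $u_e\neq 0$.

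I do not expect a genuine obstacle, since the computation runs in parallel with the converse part of Proposition \ref{Close-flux}. The one point requiring care is the \emph{bookkeeping}: one must consistently feed $\delta_1^*$ (through \eqref{tildeUfctue}) into the mass and momentum manipulations while letting the independent $\delta_1,\delta_2$ enter only through the flux $J$ and the source $\bar\tau_b$, so that after subtraction those common terms cancel exactly and the residual $u_e\partial_x(\,\cdot\,)$ contributions assemble into the advective derivative of $w$.
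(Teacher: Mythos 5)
Your proof is correct and follows essentially the same route as the paper: differentiate the defining relation \eqref{tildeUfctue} in time, substitute the mass, momentum and ideal-fluid equations to obtain a transport-type equation for $u_e\delta_1^*$ with the prescribed $(\delta_1+\delta_2)$-flux and $\bar\tau_b$ on the right, then invoke the von K\'arm\'an equation so that these common terms cancel. The only cosmetic difference is that you rewrite \eqref{Equedelta} in the same advective form and subtract, whereas the paper substitutes \eqref{Equedelta} directly to eliminate $\delta_2$; the algebra is identical.
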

\begin{proof}
  From \eqref{tildeUfctue}
  \begin{align*}
    \partial_t(u_e\bar\delta\delta_1^*) &= \partial_t(hu_e)-\partial_t(hU) = u_e\partial_th+h\partial_t u_e + \partial_x\big(\big(h-\bar\delta(\delta_1+\delta_2)\big)u_e^2\big)+h\partial_xp+\bar\delta\bar\tau_b\\
    &=-u_e\partial_x(hU) - hu_e\partial_xu_e-h\partial_xp+\partial_x(hu_e^2)-\bar\delta\partial_x\big((\delta_1+\delta_2)u_e^2\big)    +h\partial_xp+\bar\delta\bar\tau_b\\
    &=-u_e\partial_x(hu_e)+u_e\partial_x(\bar\delta\delta_1^*u_e) - hu_e\partial_xu_e + \partial_x(hu_e^2)-\bar\delta\partial_x\big((\delta_1+\delta_2)u_e^2\big) +\bar\delta\bar\tau_b\\
    &= - \bar\delta\left(-u_e\partial_x(\delta_1^*u_e)+\partial_x((\delta_1+\delta_2)u_e^2)-\bar\tau_b\right).
  \end{align*}
  Now using the von K\'arm\'an equation to eliminate $\delta_2$, we obtain
  \begin{align*}
    \bar\delta\partial_t(u_e\delta_1^*) &= - \bar\delta\left(-u_e\partial_x(\delta_1^*u_e)+\partial_x(\delta_1 u_e^2) -\partial_t(u_e\delta_1)-u_e\delta_1\partial_xu_e\right)  \\
    &= - \bar\delta\left(-u_e\partial_x\big(u_e(\delta_1^*-\delta_1)\big)-\partial_t(u_e\delta_1)\right),
  \end{align*}
  which is the desired result.
\end{proof}

In summary, 
propositions \ref{Close-flux} and \ref{Caract-delta} show that there are two equivalent possibilities to compute $\delta_1$ in order to close the system.
The first one consists in adding the algebraic relation \eqref{tildeUfctue}, which gives somewhat an equation of state.
The second one makes use of the von K\'arm\'an equation that we can also rewrite in the following form
\begin{equation}\label{VKprof}
  \partial_t(\delta_1u_e)+\partial_x\left((1+\frac{1}{H})\delta_1u_e^2\right)=\bar\tau_b + u_e\partial_x(\delta_1u_e),
\end{equation}
emphasizing the fact that $\delta_1u_e$ is advected with velocity $u_e$. This will be useful in particular for numerical purposes.

Putting together the results of Proposition \ref{Close-flux} and the closure formul\ae\ \eqref{delta2fnctdelta1} and \eqref{parietalwithprofile} on the velocity profile in the viscous layer, we can now rewrite momentum equation \eqref{eq:sw-momentum}---or \eqref{eq:sw-moment2}---of the viscous shallow water model in the form 
\begin{equation}\label{eq:sw-momentum-final}
\partial_t(hU)+\partial_x\left(\big(h-\bar \delta \delta_1(1+\frac{1}{H})\big)u_e^2 + \frac{h^2}{2Fr^2}\right)
  = -\frac{hf_b'}{Fr^2}-\bar\delta\bar\tau_b.
\end{equation}
Moreover, the relations between $h$, $U$, $u_e$ and $\delta_1$ through the displacement thickness \eqref{tildeUfctue} and \eqref{usquarefctue} directly imply the following expressions for the momentum flux
\begin{equation*}%\label{usquarefctHU}
  \int_{f_b}^{\eta}u^2\d{y} = (h-\bar\delta(\delta_1+\delta_2))u_e^2= hU^2 + \bar\delta(\delta_1-\delta_2-\bar\delta\delta_1^2/h)u_e^2.
\end{equation*}
The last expression clearly emphasizes that we are able to compute a non constant Boussinesq
coefficient, indeed from \eqref{eq:Boussinesq} we have
\begin{equation}\label{eq:Boussinesq2}
\beta = 1 + \bar\delta \frac{(\delta_1-\delta_2-\bar\delta\delta_1^2/h)u_e^2}{hU^2} %\\
%& = \left(1 - \frac{\bar\delta\delta_1}{h}(1+\frac{1}{H}) \right)\left(1-\frac{\bar\delta\delta_1}{h} \right)^{-2} \\
 = 1 + (1 - \frac{1}{H})\frac{\bar\delta\delta_1}{h} + O(\bar\delta^2).
\end{equation}
For $\bar\delta=0$ we recover the classical shallow water system with Boussinesq coefficient equal to 1. As soon as viscosity effects arise, that is $\bar \delta>0$, we have not only the friction term on the right-hand side of \eqref{eq:sw-momentum-final} but also a correction of the same order one in $\bar\delta$ to the hydrostatic pressure. Notice that in \cite{Richard2012} a similar correction in the flux of the shallow water system is proposed to improve the study of roll-waves.
In the context of a thin viscous layer $(\bar\delta\delta_1/h \ll 1)$ we consider here, one can observe from \eqref{eq:Boussinesq2} that the Boussinesq coefficient is very close to unity, so that its impact on the the velocity correction is negligible. 
Therefore we focus on the study of the friction term.

\subsection{Final equivalent ideal fluid formulations}
%%%%%%%%%%%%%%%%%%%%%%%%%%%%%%%%%%%%%%%%%%%%%%%%%%%%%%%%%%%%%%%
The aim of this section is to propose extended shallow water (ESW) models involving an ideal fluid with velocity $u_e$ and to couple them with the von K\'arm\'an equation describing evolution of the displacement thickness $\delta_1$. The first step towards these models consists in providing two systems where we keep equation \eqref{eq:sw-mass} for mass conservation and constitutive relation \eqref{tildeUfctue} on the displacement thickness. With these relations, momentum equation \eqref{eq:sw-momentum-final} can easily be reformulated as
\begin{equation*}
h\left(\partial_tu_e+u_e\partial_xu_e+\frac{1}{Fr^2}(\partial_xh+f'_b)\right)
-\bar\delta\left(\partial_t(\delta_1u_e) + \partial_x\left((1+\frac{1}{H})\delta_1u_e^2\right)- u_e\partial_x(u_e\delta_1) -\bar\tau_b\right)=0.
\end{equation*}
It is clear on this formulation that among the three following equations: inviscid momentum \eqref{eq:FP}, viscous momentum \eqref{eq:sw-momentum-final} and von K\'arm\'an \eqref{VKprof}, once two equations are satisfied then so is the third one. 

From this consideration, the ESW can be expressed with two equivalent formulations: the first one describes an ideal fluid living on a viscous layer, which becomes some apparent topography; the second one represents an equivalent ideal fluid over the whole depth $h$. It turns out that this last formulation is the most suitable for numerical studies. Here these systems are obtained by straightforward manipulations of the equations, but it is noteworthy that they can be derived as well from the Euler system with appropriate boundary conditions, as we shall see below.

\medskip
\noindent{\bf Apparent topography formulation.} Let us derive the first system which describes an ideal fluid lying above the viscous layer of thickness $\bar\delta\delta_1$, see again figure \ref{Fig:definition-delta1}. First, we define the effective depth ${\mathcal H}:=h-\bar\delta\delta_1$ of ideal fluid. The constitutive relation \eqref{tildeUfctue} allows to write the flux of mass $hU={\mathcal H}u_e$. Next, multiplying the ideal fluid equation \eqref{eq:FP} by $\mathcal{H}$ and substituting the mass balance equation \eqref{eq:sw-mass} into it, the resulting equation together with the mass balance equations and the von K\'arm\'an equation form the following coupled system
\begin{equation}\label{eq:ESW-apparent-topo}
\left\{\begin{aligned}
 & \partial_t\mathcal{H}+\partial_x(\mathcal{H}u_e) + \partial_t(\bar\delta\delta_1) = 0, \\
  & \partial_t(\mathcal{H}u_e)+\partial_x\bigg(\mathcal{H}u_e^2+\dfrac{\mathcal{H}^2}{2Fr^2}\bigg)+u_e\partial_t(\bar\delta\delta_1) =
  -\dfrac{\mathcal{H}}{Fr^2}\big(f'_b+\partial_x(\bar\delta\delta_1)\big), \\
  & \partial_t(\delta_1u_e)+\partial_x\left((1+\frac{1}{H})\delta_1u_e^2\right) = u_e\partial_x(\delta_1u_e) + \bar\tau_b. 
\end{aligned}\right.
\end{equation}
One can see that the two first equations of the system represent a shallow flow, of thickness $\mathcal{H}$, over a modified topography, namely $f_b + \bar\delta\delta_1$. This has to be related to the so-called ``apparent topography'' formulation, where for numerical purposes the friction term is rewritten as the derivative of some function, see \cite{Bouchut04,Bouchut2004}. Here this derivative arises in a natural way, together with additional time derivatives in the mass and momentum equations. %Once again, the friction term disappears from momentum equation. %, which can sometime be neglected on each time step in the numerical method. 

As mentioned, the apparent topography formulation can be obtained as well from integration of the Euler system over the vertical, but from the modified topography $f_b + \bar\delta\delta_1$ to the free surface $\eta$. The key point is that the boundary condition on the interface between the viscous layer and the ideal fluid is a non-penetration condition
\begin{equation}\label{eq:TopAppGliss}
  v_e|_{y = f_b + \bar\delta\delta_1} = u_e(f'_b + \partial_x(\bar\delta \delta_1)).
\end{equation}

As for the classical shallow water model, it is straightforward to obtain from the first two equations of system \eqref{eq:ESW-apparent-topo} the following energy balance equation:
\begin{equation}\label{eq:ESW-energy}
    \partial_t\left(\frac{\mathcal{H}u_e^2}{2} + \frac{(\mathcal{H} + f_b + \bar\delta\delta_1)^2}{2Fr^2} \right) + \partial_x\left(u_e\left(\frac{\mathcal{H}u_e^2}{2} + \frac{\mathcal{H}(\mathcal{H} + f_b + \bar\delta\delta_1)^2}{2Fr^2} \right) \right) = -\frac{u_e^2}{2}\partial_t(\bar\delta\delta_1).
\end{equation}
In contrast with the classical shallow water system, the ``dissipation'' of energy here is driven by the dynamical behaviour in time of the displacement thickness $\delta_1$. 
If $\partial_t(\bar\delta\delta_1) \geq 0$, as for the Blasius-Stokes solution presented below in Section \ref{sec:blasius-stokes},
we have actually dissipation of energy by the bottom friction, thus some stability of the solutions. If this not the case, the problem of energy dissipation is open.

\medskip
\noindent{\bf Interactive Boundary Layer formulation.} This model describes an ideal fluid on the whole water depth $h$. Multiplying \eqref{VKprof} by $\bar\delta$ and substituting it into \eqref{eq:sw-momentum-final} leads to a model being very similar to the usual shallow water one. The resulting system reads
\begin{equation}\label{eq:ESW-IBL}
\left\{\begin{aligned}
 & \partial_th+\partial_x(hu_e -\bar \delta \delta_1u_e) = 0, \\
 & \partial_t(hu_e)+\partial_x\left(hu_e^2 + \frac{h^2}{2Fr^2}\right) = -\frac{hf_b'}{Fr^2} + u_e\partial_x(\bar\delta\delta_1u_e), \\
 & \partial_t(\delta_1u_e)+\partial_x\left((1+\frac{1}{H})\delta_1u_e^2\right) = u_e\partial_x(\delta_1u_e) + \bar\tau_b.  
\end{aligned}\right.
\end{equation}
Once the conservative variables $h, hu_e$ and $\delta_1u_e$ are solved, the averaged velocity $U$, if needed, can be recovered using relation \eqref{tildeUfctue}. An noticeable feature of this formulation is that the friction 
is no longer explicitly present in the momentum equation, it is replaced by an advection term on the ``momentum'' $\delta_1u_e$. This term actually represents the momentum exchange between these two layers. 
System \eqref{eq:ESW-IBL} enjoys a more conservative structure than the previous formulation, which makes it more suitable for numerical discretization.

This formulation of the model can also be obtained by integrating the Euler system over the whole water depth, but the non-penetration condition \eqref{eq:non-penetration} has to be replaced by a slightly modified one, 
called  {\em transpiration condition}, that writes 
\begin{equation}\label{eq:transpi}
  v_e|_{y = f_b} = u_ef'_b + \bar\delta\partial_x(\delta_1u_e).
\end{equation}
which is in fact a formulation of the ``Interactive Boundary Layer'' (IBL) or ``Viscous Inviscid Interaction'' in aerodynamics, see \cite{Lagree2010}. Comparing with \eqref{eq:non-penetration}, one can see that the transpiration condition is a correction of order one in $\bar\delta$ of the non-penetration condition due to the development of viscous layer. We shall refer to \eqref{eq:ESW-IBL} as the IBL formulation in the following.
For the sake of completeness, we briefly recall how condition \eqref{eq:transpi} is derived. 

We wish to estimate the vertical velocity $v_e$ of the ideal fluid at the bottom $y = f_b$. We first notice that, since $\partial_yu_e=0$, integrating incompressibility equation \eqref{LWmass} of ideal fluid over the whole water depth yields
\begin{equation*}
    v_e|_{y = f_b} = v_e|_{y = \eta} + (\eta-f_b)\partial_x u_e.
\end{equation*}
Next, we have on the one hand 
\begin{equation*}
    \bar\delta\bar v|_{\bar y = \bar\eta} = v_e|_{y = \eta} -f'_bu_e
\end{equation*}
from the Prandtl shift \eqref{PrandtlShift}. On the other hand the vertical velocity $\bar v$ is recovered through the incompressibility relation \eqref{Eccl1} in the viscous layer, since we can write $\partial_{\bar y}\bar v = \partial_{\bar x}(u_e-\bar u) - \partial_{\bar x}u_e$. Integrating this later equation over the depth to obtain 
\begin{equation*}
\bar\delta\bar v|_{\bar y = \bar\eta} = \bar\delta\partial_x(u_e\delta_1)-(\eta-f_b)\partial_xu_e.
\end{equation*}
Putting things together leads to the required transpiration boundary condition \eqref{eq:transpi}.

%%%%%%%%%%%%%%%%%%%%%%%%%%%%%%%%%%%%%%%%%%%%%%%%%%%%%%%%%%%%%%%%%%%
\section{Numerical analysis of the IBL formulation}\label{sec:Num}
%%%%%%%%%%%%%%%%%%%%%%%%%%%%%%%%%%%%%%%%%%%%%%%%%%%%%%%%%%%%%%%%%%%
We propose here a numerical implementation of the IBL formulation  \eqref{eq:ESW-IBL}. This model was chosen because of its relative simplicity compared to the apparent topography and to the four equations models. Also, it can be
related to the previous implementation in the context of rigid pipes, see \cite{Lagree2005}. In order to  design a finite volume solver, we first rewrite this system in vector form as
\begin{equation*}
  \partial_t W + \partial_x F(W) + B(W) = \tau(W),
\end{equation*}
where the conservative variable $W$, the flux $F(W)$, the convective term $B(W)$ and the source term $\tau(W)$ are defined by
\begin{equation*}
  W = \begin{pmatrix} 
    h \\ hu_e \\ \delta_1 u_e 
  \end{pmatrix}, \quad
  F(W) = \begin{pmatrix} 
    hu_e - \bar\delta\delta_1u_e \\
    hu_e^2 + \frac{h^2}{2Fr^2} \\
    (1+\frac{1}{H})\delta_1u_e^2
  \end{pmatrix}, \quad 
  B(W) = {}-\begin{pmatrix} 
    0 \\
    u_e\partial_x(\bar\delta\delta_1 u_e) \\
    u_e\partial_x(\delta_1 u_e)
  \end{pmatrix}, \quad
  \tau(W) = \begin{pmatrix} 
    0 \\
    -\frac{hf_b'}{Fr^2} \\
    \frac{f_2H}{\delta_1}u_e
    \end{pmatrix}.
\end{equation*}
The system is numerically solved by a splitting method. First we solve the so-called convective part
    \begin{equation}\label{eq:ESW-convection}
        \partial_t W + \partial_x F(W) + B(W) = 0,
    \end{equation}
together with the topography source term $-\frac{hf_b'}{Fr^2}$, in a well-balanced way. Next, we solve the friction part
    \begin{equation}\label{eq:ESW-friction}
        \partial_t(\delta_1 u_e) = \frac{f_2H}{\delta_1}u_e
    \end{equation}
by a semi-implicit method.

%%%%%%%%%%%%- ----------------------------------- hyperbolicity
\subsection{Eigenvalue analysis}
For numerical purpose and stability analysis, we first study the hyperbolicity of the convective part \eqref{eq:ESW-convection} of the model. To this end, we rewrite \eqref{eq:ESW-convection} in quasi-linear form $\partial_t W + A(W)\partial_x W = 0$. The system is said to be hyperbolic if the convective matrix $A(W)$ is $\mathbb{R}$-diagonalizable, and strictly hyperbolic if the eigenvalues are distinct. Estimating these eigenvalues is also important to design an explicit finite volume scheme for the system. As eigenvalues are invariant by changing variables \cite{Godlewski96}, it is therefore more convenient to make the variable change $W \mapsto Y(W) := (h,u_e,\delta_1u_e)^t$ and study eigenvalues of the corresponding convective matrix
\begin{equation*}
  \tilde{A}(Y) = \begin{pmatrix} 
    u_e & h & - \bar\delta  \\
    Fr^{-2} & u_e & 0   \\
    0 & a & b  - u_e   
  \end{pmatrix}, 
\end{equation*}
where we have denoted the partial derivatives
\begin{equation*}
    a :=  \frac{\partial}{\partial u_e}\left((1+\frac{1}{H})\delta_1u_e^2\right), \qquad b := \frac{\partial}{\partial(\delta_1u_e)}\left((1+\frac{1}{H})\delta_1u_e^2\right).
\end{equation*}
Their explicit expressions can be computed as well once a closure formula for the shape factor is provided, since the shape factor $H$ is a function of $\Lambda_1$, so $H$ depends only on $u_e$ and $\delta_1u_e$. 

The characteristic polynomial of $\tilde{A}(Y)$ reads
\begin{align*}
  P(\lambda) = {\rm det}(\tilde{A} - \lambda~{\rm Id}) &= - \left[(b - u_e - \lambda)\left((u_e-\lambda)^2 - Fr^{-2}h\right) -\bar\delta Fr^{-2} a\right] \\
  &= {}-P_{SW}(\lambda) + d_{\bar\delta},
\end{align*}
where
\begin{equation*}
P_{SW}(\lambda) = (b - u_e - \lambda)\left((u_e-\lambda)^2 - Fr^{-2}h\right), \quad d_{\bar\delta} := \bar\delta Fr^{-2} a.
\end{equation*}
The polynomial $P_{SW}$ has always 3 roots, denoted $\lambda_{1,2,3}^0$, which represent the wave speeds of the system with no coupling between shallow water equations and the Von K\'arm\'an one. Indeed, the first two roots $\lambda_{1,2}^0$ express the propagation velocities of ideal fluid while the last one $\lambda_3^0$ approximate that of the viscous layer:
\begin{equation}\label{eq:SW-wave-speeds}
  \lambda_{1,2}^0 = u_e\pm \frac{\sqrt{h}}{Fr}, \quad \lambda_3^0 = b - u_e.
\end{equation}

From this it follows that, for $\bar\delta$ small enough, the characteristic polynomial admits 3 real eigenvalues and the system turns out to be hyperbolic. More precisely, denoting $\lambda_{-} < \lambda_{+}$ the roots of $P'_{SW}(\lambda)$, the system is hyperbolic when $d_{\bar\delta}$ lies between $P_{SW}(\lambda_{\pm})$, see figure \ref{Fig-hyperbolicity}, that is
\begin{equation*}
    P_{SW}(\lambda_-) < \frac{\bar\delta a}{Fr^2} < P_{SW}(\lambda_+).
\end{equation*}
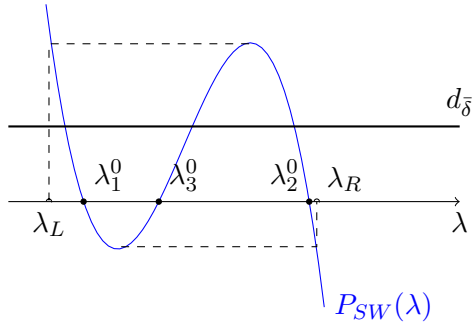
\begin{figure}[ht!]\centering
  \begin{tikzpicture}[domain=0:6, scale=1]
    \draw[->] (0,0) -- (6,0) node[below]{$\lambda$};
    \draw[blue] plot[domain=0.5:4.2,samples=50] (\x,{-(\x - 1.)*(\x - 2.)*(\x - 4.)}) node[right]{$P_{SW}(\lambda)$};
    \draw[fill=black] (1,0) circle (1pt) node[above right]{$\lambda_1^0$};
    \draw[fill=black] (2,0) circle (1pt) node[above right]{$\lambda_3^0$};
    \draw[fill=black] (4,0) circle (1pt) node[above left]{$\lambda_2^0$};
    \draw[color=black, thick] (0,1) -- (6,1) node[above]{$d_{\bar\delta}$};
    \draw[color=black, dashed] (3.2,2.1) -- (0.54,2.1) -- (0.54,0) circle (1pt) node[below]{$\lambda_L$};
    \draw[color=black, dashed] (1.5,-0.6) -- (4.1,-0.6) -- (4.1,0) circle (1pt) node[above right]{$\lambda_R$};
  \end{tikzpicture}
  \caption{Roots of characteristic polynomial of the coupled system.}
  \label{Fig-hyperbolicity} 
\end{figure}

These solutions denoted $\lambda_{1,2,3}$ are order one perturbations in $\bar\delta$ of the roots of $P_{SW}$, namely
\begin{equation*}
    \lambda_{1,2,3} = \lambda_{1,2,3}^0 + O(\bar\delta).
\end{equation*}
It is straightforward to verify that for the case of closure \eqref{eq:Falkner-Skan-closure} based on Falkner-Skan solutions, the third wave speed reads
\begin{equation}\label{eq:FS-lambda3}
\lambda_3^0 = \frac{u_e}{H}(1 + 0.74\Lambda_1), \quad \text{for } \Lambda_1 < 0.6,
\end{equation}
and in particular when $\Lambda_1 = 0$, i.e. the Blasius solution, the viscous layer propagates downstream at velocity $\lambda_3^0 = u_e/H \simeq 0.39 u_e$ (see figure \ref{fig:wave-speeds}-left).

Even within the hyperbolic regime, the eigenstructure is given only implicitly, due to the form of nonlinear coupling between the ideal fluid and viscous layer. As a result, when dealing with numerical methods requiring characteristic field decomposition, e.g. Roe type method, the eigenvalues need to be computed by numerical root finding. In the absence of analytic expressions for the eigenvectors, building desirable properties for such a scheme may be more difficult. We propose hereafter a HLL type scheme \cite{Harten83} taking advantage that the numerical flux arises directly from the governing equations \eqref{eq:ESW-convection} and only an estimation of lowest and the largest wave speeds $\lambda_{L,R}$ is required. Such a wave speeds estimation can be done by using accurate Nickalls's bounds \cite{Nickalls2011}, which writes
\begin{equation}\label{eq:estimate-wave-speeds}
 \lambda_{L,R} := \frac{1}{3}\left(u_e + b \mp 2\sqrt{(2u_e-b)^2 + \frac{3h}{Fr^2}} \right), \quad b = u_e\left(1 + \frac{1+0.74\Lambda_1}{H}\right),
\end{equation}
in which we have used \eqref{eq:FS-lambda3} provided from the Falkner-Skan closure. 

On figure \ref{fig:wave-speeds} we compare the shallow water  wave speeds $\lambda_{1,2}^0$, the viscous layer one $\lambda_3^0$ and the bounds $\lambda_{L,R}$ given by \eqref{eq:estimate-wave-speeds}. Rescaling these velocities by $u_e$, the result can be considered as functions of the {\em local} Froude number $Fr_0:=Fr\sqrt{u_e^2/h}$ and the pressure parameter $\Lambda_1$. The left figure shows the dependence on $Fr_0$ in the case of Blasius solution, i.e. $\Lambda_1 = 0$, both for subcritical and supercritial regimes. We find that the estimation $\lambda_R$ for the largest wave speed is very accurate. On the right figure, we display the dependence on $\Lambda_1$ for a fixed value of $Fr_0$, e.g. by considering the case of critical flow so $\lambda_1^0 = 0$. As we can see, the velocity $\lambda_3^0$ can be negative---the wave associated to viscous layer propagate upstream---for large reverse flow.  

\begin{figure}[!ht]
  \centering
    \includegraphics[width=0.49\linewidth]{./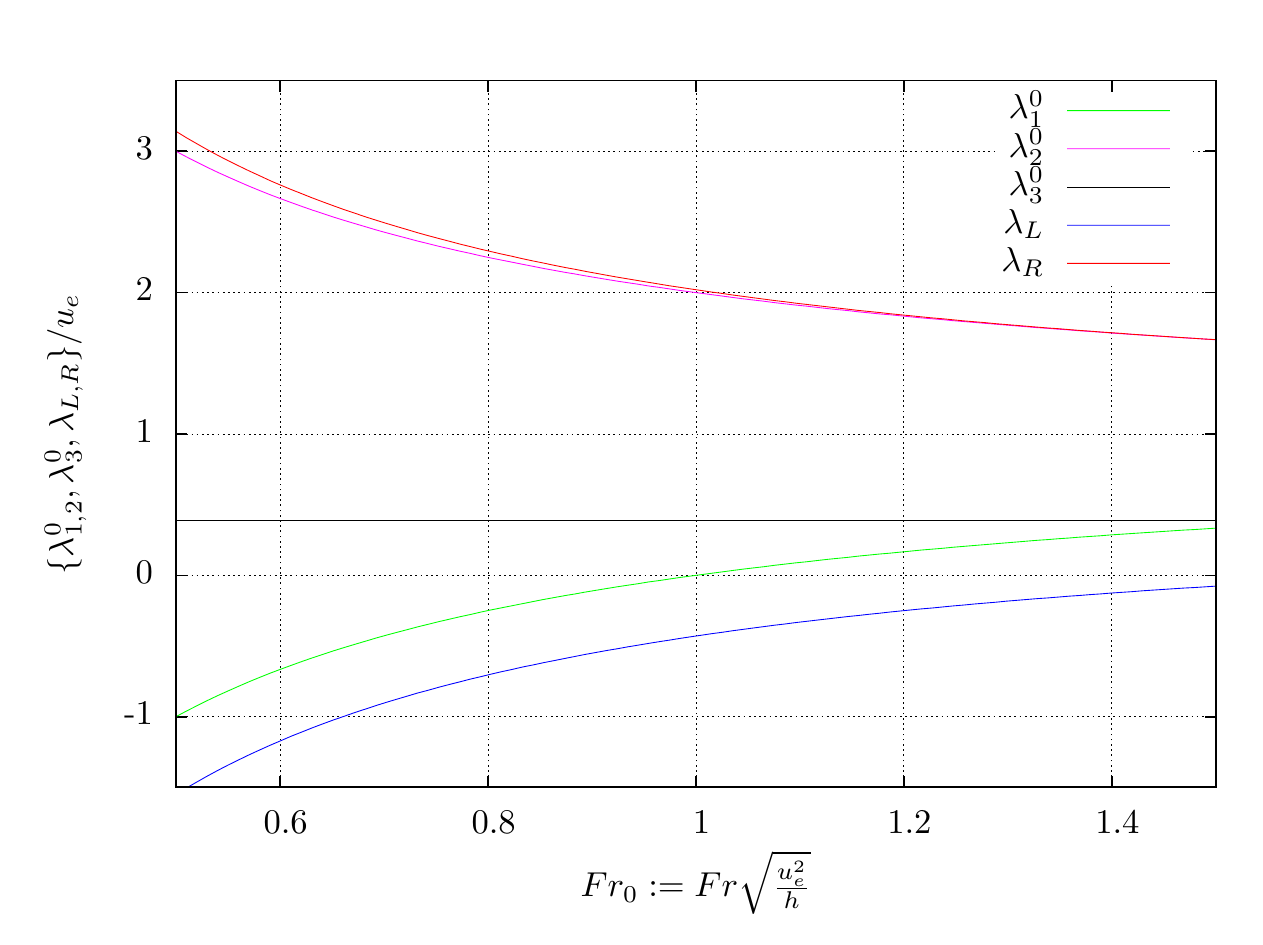}
    \includegraphics[width=0.49\linewidth]{./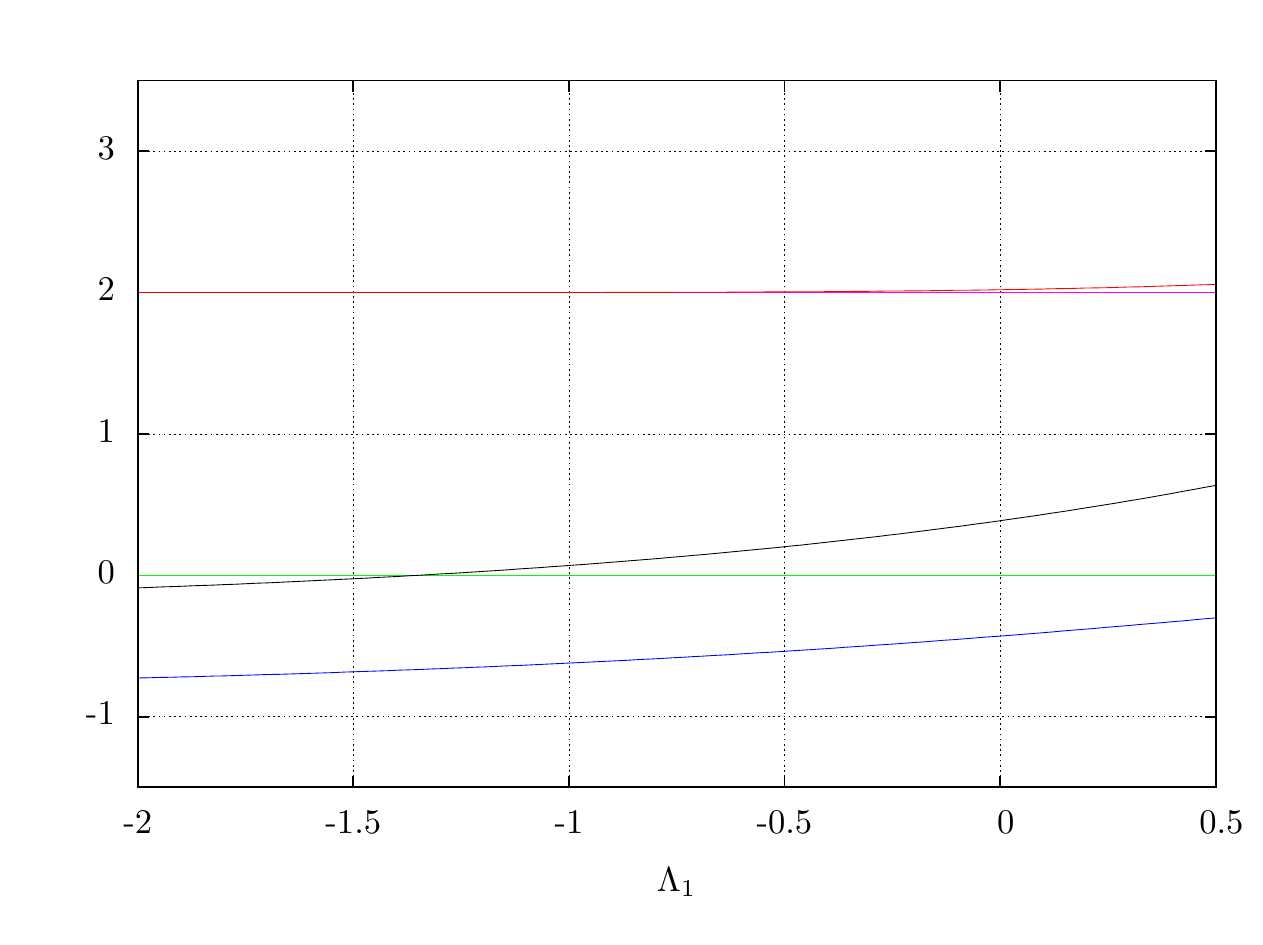}
    \caption{Comparison of wave speeds $\lambda^0_{1,2,3}$ and the estimations $\lambda_{L,R}$. Left: Blasius case ($\Lambda_1 = 0$), wave speeds as a function of the Froude nimber; Right: critical flow ($Fr_0=1$), wave speeds as a function of $\Lambda_1$ (same color code on both graphs, color online).}
    \label{fig:wave-speeds}
  \end{figure}

%%%%%%%%%%----------------------------------------------- Godunov-type
\subsection{A Godunov-type finite volume scheme}
Let us recall some basic notations of finite volume discretization. We introduce a space step $\Delta x$ and a time step $\Delta t$, both assumed to be constant for simplicity. The computational domain is discretized by a sequence of points $x_{j+1/2} := j\Delta x$ for $j\in\mathbb{Z}$. We define $W_j^0$ a piecewise constant approximation of initial condition on each control volume $C_j:= ]x_{j-1/2}, x_{j+1/2}[$. The time step $\Delta t$ for a mesh size $\Delta x$ has to atisfiy the well-known CFL condition
\begin{equation}\label{eq:CFL}
  \Delta t \leq \frac{\Delta x}{2|\lambda_{\rm max}|},
\end{equation}
where $\lambda_{\rm max}$ is the largest eigenvalue expressing the fastest wave speed of the system. This condition ensures that information of each Riemann problem at a cell's interface does not cross more than one cell.

\medskip
\noindent{\bf Convection step.} Assume that the solution $W_j^n$ at time $t^n$ is known. Godunov-type schemes compute the solution to \eqref{eq:ESW-convection} at the next time level $t^{n+1}:= t^n + \Delta t$ by building first an approximate solution $W_{\Delta}(x,t)$ of the Riemann problem at each interface $x_{j+1/2}$ with initial data $\{W_j^n\}_{j\in \mathbb{Z}}$, and next averaging $W_{\Delta}(x,\Delta t)$ on each control volume to obtain a piecewise constant solution $W_j^{n+1/2}$.

  Given initial data $(W_L,W_R)$ of local Riemann problem, we adopt a simple Riemann solver $W_{\Delta}(x,t)$ composed by three discontinuity waves propagating with velocities $\lambda_L\leq 0$, $\lambda_0 = 0$, $\lambda_R \geq 0$ and two intermediate states $W_L^*$ and $W_R^*$ in the star region, see figure \ref{fig:Riemann-problem}. 
  In the following $\lambda_{L,R}$ are given by formula \eqref{eq:estimate-wave-speeds}, and the zero velocity $\lambda_0$ corresponds to the stationary contact discontinuity associated with the topography. The third eigenvalue $\lambda_3$ 
  was not used here because the analytical expression of the Riemann invariant is unknown.
  
  Under CFL condition \eqref{eq:CFL}, the first-order three-points finite volume scheme writes
  \begin{equation}\label{eq:Godunov-scheme}
    W_j^{n+1/2} = W_j^n - \frac{\Delta t}{\Delta x}\left(F_{j+1/2}^L - F_{j-1/2}^R\right),
  \end{equation}
  where the left- and right- numerical fluxes $F_{j+1/2}^{L,R}  := F^{L,R}(W_j^n,W_{j+1}^n)$ are given by
  \begin{equation}\label{eq:numerical-fluxes}
    \begin{aligned}
      & F^L(W_L,W_R)  := F(W_L) + \lambda_L(W_L^*-W_L), \\
      & F^R(W_L,W_R)  := F(W_R) - \lambda_R(W_R-W_R^*).
    \end{aligned}
  \end{equation}
  Therefore, designing  such a scheme consists in determinating the intermediate states $W_{L,R}^*$ in the star region.

  \begin{figure}[ht!]
    % \sidecaption
    \centering
    \begin{tikzpicture}[scale=0.7]
      \draw[->] (-5,0) -- (5,0)   node[above] {$x$};
      \draw[->] (0,0) -- (0,3.5)  node[left] {$t$};
      \draw[fill=black] (0,0) circle (2pt) node[below]{0};
      \draw[dashed] (-4.5,2.5) -- (4.5,2.5) node[right] {$\Delta t$};
      \draw[dashed] (-4,2.5) -- (-4,0) node[below]{$-\Delta x/2$};
      \draw[dashed] (4,2.5) -- (4,0) node[below]{$\Delta x/2$};
      \draw[color=blue] (0,0) -- (0,2.5) node[above right]{$\lambda_0$};
      \draw[color=blue] (0,0) -- (3.5,2.5) node[above] {$\lambda_R$};
      \draw[color=blue] (0,0) -- (-3,2.5) node[above] {$\lambda_L$};
      \draw (-2.5,0.7) node{$W_L$};
      \draw (2.5,0.7) node{$W_R$}; 
      \draw (-0.3,1.7) node[left] {$W_L^*$}; 
      \draw (0.3,1.7) node[right] {$W_R^*$};  
      \end{tikzpicture}
    \caption{A three-waves approximate Riemann problem.}
    \label{fig:Riemann-problem} 
  \end{figure}
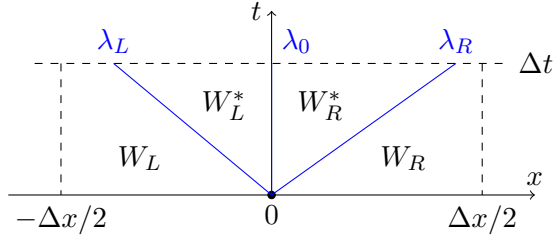

  % ========================
  According to \cite{Harten83} the approximate solver $W_{\Delta}(x,t)$ must be consistent with the exact solution $W_{\mathcal{R}}(x,t)$ in the sense that
  \begin{equation*}
    \int_{-\frac{\Delta x}{2}}^{\frac{\Delta x}{2}}W_{\mathcal{R}}(x,\Delta t)\d{x} = \int_{-\frac{\Delta x}{2}}^{\frac{\Delta x}{2}}W_{\Delta}(x,\Delta t)\d{x}.
  \end{equation*}
  Applying to the conservation law \eqref{eq:ESW-convection}, this integral consistency condition provides that the intermediate states satisfy the following relations
  \begin{align}
    \lambda_Rh_R^* - \lambda_Lh_L^* & = \lambda_Rh_R - \lambda_Lh_L - \jump{hu_e -  \bar\delta\delta_1 u_e}, \label{eq:consistency-h}\\
    \lambda_R(hu_e)_R^* - \lambda_L(hu_e)_L^* & = \lambda_R(hu_e)_R - \lambda_L(hu_e)_L - \jump{hu_e^2+\frac{h^2}{2Fr^2}} - \Delta x\average{\frac{hf_b'}{Fr^2}} + \bar\delta\Delta x\average{u_e\pd{}{x}(\delta_1 u_e)}, \label{eq:consistency-hu}\\
    \lambda_R(\delta_1u_e)_R^* - \lambda_L(\delta_1u_e)_L^* & = \lambda_R(\delta_1u_e)_R - \lambda_L(\delta_1u_e)_L - \jump{(1+\frac{1}{H})\delta_1 u_e^2} + \Delta x\average{u_e\pd{}{x}(\delta_1 u_e)}, \label{eq:consistency-du}
  \end{align}
  where $\jump{\bullet} := (\bullet)_R - (\bullet)_L$ standings for the usual jump operator and we have introduced the space-time averaging of the non-conservative source terms
  \begin{align*}
    \average{\frac{hf_b'}{Fr^2}} & = \frac{1}{\Delta x\Delta t}\int_{-\frac{\Delta x}{2}}^{\frac{\Delta x}{2}}\int_0^{\Delta t} \frac{hf_b'}{Fr^2}\d{t}\d{x}, \\
    \average{u_e\pd{}{x}(\delta_1 u_e)} & = \frac{1}{\Delta x\Delta t}\int_{-\frac{\Delta x}{2}}^{\frac{\Delta x}{2}}\int_0^{\Delta t} u_e\pd{}{x}(\delta_1 u_e)\d{t}\d{x}.  
  \end{align*}

  It is well-known that the non-conservative products arising in the source terms may not make sense as distributions. It is possible to give a rigorous definition using Vol'pert's calculus on BV functions \cite{Volpert1967}. This choice leads to following approximations
  \begin{equation}\label{eq:source-term-approximation}
    \Delta x\average{\frac{hf_b'}{Fr^2}} = \frac{h_L+h_R}{2Fr^2}\jump{f_b}, \quad \Delta x\average{u_e\pd{}{x}(\delta_1 u_e)} = \frac{(hu_e)_L + (hu_e)_R}{h_L+h_R}\jump{\delta_1 u_e},
  \end{equation}
  which preserve at least the \emph{lake-at-rest} equilibrium state, that is $u=0, \delta_1 = 0, \jump{h+f_b} = 0$. 
  
  Consistency conditions \eqref{eq:consistency-h}--\eqref{eq:consistency-du} have to be complemented by three additional relations in order to solve completly the intermediate states. The Riemann invariants associated to the stationary contact wave can be used to provide these missing relations
  \begin{gather}
    (hu_e)_L^* - \bar\delta(\delta_1u_e)_L^* = (hu_e)_R^* - \bar\delta (\delta_1u_e)_R^* , \label{eq:bernoulli-hu}\\
    \left(\frac{(u_e)_R^{*2}}{2} + \frac{h_R^*}{Fr^2}\right) - \left(\frac{(u_e)_L^{*2}}{2} + \frac{h_L^*}{Fr^2}\right) =  - \frac{\jump{f_b}}{Fr^2}, \label{eq:bernoulli-u}\\
    (\delta_1u_e)_L^* = (\delta_1u_e)_R^*.\label{eq:bernoulli-du}
  \end{gather}

  Note that the first two equations are the analytical Riemann invariants of the system applied for intermediate states while the third one related to $\delta_1u_e$ is just an approximation. In fact, the last analytical Riemann invariant is not known explicitly due to the form of nonlinear coupling between the ideal fluid and viscous layer. From the numerical point of view, this consists in approximating $(\delta_1u_e)_{L,R}^*$ in the star region by only one averaged value $(\delta_1u_e)^*$, as in the classical HLL scheme \cite{Harten83}. Moreover, this choice leads to $ (hu_e)_L^* = (hu_e)_R^*:=q^*$ and, together with \eqref{eq:bernoulli-u}, we find again the well-known Bernoulli relations of shallow water equations
  \begin{equation}\label{eq:bernoulli-SW}
    \frac{q^{*2}}{2}\left(\frac{1}{h_R^{*2}} - \frac{1}{h_L^{*2}}\right) + \frac{h_R^* - h_L^*}{Fr^2} = -\frac{\jump{f_b}}{Fr^2}.
  \end{equation}

  Plugging \eqref{eq:bernoulli-hu}, \eqref{eq:bernoulli-du} into \eqref{eq:consistency-hu} and \eqref{eq:consistency-du} allows us to solve the discharges $q^*$ and $(\delta_1u_e)^*$ in the star region. Next, intermediate water depths $h_{L,R}^*$ are obtained from \eqref{eq:consistency-h} and \eqref{eq:bernoulli-SW}. Similarly to the case of classical shallow water model, it can be shown that the present scheme is accurate and well-balanced. We refer to \cite{Goutal2017} for more technical details and discussions.  
  
  \medskip
  \noindent{\bf Friction step.} Once the convection step \eqref{eq:Godunov-scheme} is done, the next step is to take into account the friction source term of von K\'arm\'an equation to modify the displacement thickness. This consists in solving equation \eqref{eq:ESW-friction} with initial data $W^{n+1/2}$ to get the solution $W^{n+1}$ at the next time step. To this end, we use a simple semi-implicit scheme which writes 
  \begin{gather*}
      h^{n+1} = h^{n+1/2}, \quad u_e^{n+1} = u_e^{n+1/2}, \label{eq:friction-hu} \\
      \frac{(\delta_1u_e)^{n+1} - (\delta_1u_e)^{n+1/2}}{\Delta t} = \frac{(f_2H)^n}{\delta_1^{n+1}}u_e^{n+1/2}. \label{eq:friction-delta1}
  \end{gather*}
  This discretization leads to a second-order equation for $\delta_1^{n+1}$. Under condition $(\delta_1^{n+1/2})^2 + 4(f_2H)^n\Delta t \geq 0$, which implies an additional restriction on $\Delta t$ only in the case of reverse flow
  \begin{equation}\label{eq:CFL-friction}
     \Delta t \leq -\frac{(\delta_1^{n+1/2})^2}{4(f_2H)^n} \quad\text{if}\quad f_2^n < 0,
 \end{equation}
this equation has two solutions, from which  only one is physically admissible
  \begin{equation}\label{eq:ESW-friction-scheme}
  \delta_1^{n+1} = \frac{1}{2}\left(  \delta_1^{n+1/2} + \sqrt{(\delta_1^{n+1/2})^2 + 4(f_2H)^n\Delta t}\right). 
  \end{equation}
 In practice when using the Falkner-Skan closure, condition \eqref{eq:CFL-friction} is not restrictive, the time step is rather controlled by the CFL condition \eqref{eq:CFL} of the convection step.

\subsection{ Multi Layer formulation}\label{sec:MLF}
We conclude this section by a short presentation of the so-called multi-layer Saint-Venant model proposed in \cite{Audusse2011a}, which we shall use as a reference for comparison in the next section.
The authors consider a superposition of shallow water systems each one interacting with its neighbours:
\begin{equation}\label{eq:SWML}
  \left\{\begin{aligned}
  & \pd{h}{t} + \pd{}{x}\sum_{\alpha = 1}^N (h_{\alpha}u_{\alpha}) = 0, \\
  & \pd{}{t}(h_{\alpha}u_{\alpha}) + \pd{(h_{\alpha}u_{\alpha}^2)}{x} = -h_{\alpha}\pd{p}{x} + (m_{\alpha+1/2} - m_{\alpha-1/2}) + (\tau_{\alpha+1/2} - \tau_{\alpha-1/2}),
  \end{aligned}\right.
\end{equation}
where for each layer $\alpha = 1, \ldots, N$, $h_{\alpha}=\ell_{\alpha}h$ denotes the layer thickness, $\ell_{\alpha} > 0$ being a given constant, $\sum_{\alpha = 1}^N \ell_{\alpha} = 1$, and $u_{\alpha}(t,x)$ the averaged velocity in layer $\alpha$. The source terms $m_{\alpha+1/2}, ~ \tau_{\alpha+1/2}$ stand for the momentum exchange and the friction between layers $\alpha$ and $\alpha+1$ respectively. 

As proposed in
\cite{Audusse2011a},
we solve this multilayer shallow water (MLSW) system \eqref{eq:SWML} using a first-order finite volume scheme in which the numerical flux is built by a kinetic formulation. The friction term between the layers $\mathcal{\tau}_{\alpha+1/2}$ is discretized in an implicit way by
  \begin{equation}\label{eq:friction-MLSW}
  \tau_{\alpha+1/2} = \bar\delta^2 \dfrac{2(u_{\alpha+1}-u_{\alpha})}{h_{\alpha+1}+h_{\alpha}} \quad  \text{for} \quad 0 < \alpha < N, \qquad\tau_b = \tau_{1/2} = \bar\delta^2 \dfrac{2u_1}{h_1}, \quad\tau_{N+1/2} = 0.
  \end{equation}
  We notice that the third expression is due to the no-stress condition at the surface while the second one, expressing the bottom friction, is based on the no-slip condition and on a first-order expansion of the velocity. Using this model imposes some constraints on the vertical discretization: it requires a very thin layers near the bottom in order to accurately compute the friction while the velocity of two adjacent layers (in the viscous region) must not be too different in order to preserve the hyperbolicity of the model. This later condition implies that relative thickness $\ell_{\alpha}$ of the layers can be varied but only gradually. For the present study, a discretization such as $\ell_{\alpha} = z_{\alpha} - z_{\alpha-1}$ with
  \begin{equation*}
   z_{\alpha}:= \frac{e^{10\alpha/N} - 1}{e^{10}-1}\quad \text{for} \quad 0 \leq \alpha \leq N := 100
  \end{equation*}
  seems to give satisfactory results. Nevertheless the simulation with MLSW model is computationally expensive.

We interpret this MLSW system as a numerical scheme to solve  the RNSP equations \eqref{Eccl2}-\eqref{Eccl4}, the layers being the numerical discretization along the vertical direction  (see \cite{Ghigo2017} for a similar point of view in elastic tubes). 
In our approach, at the RNSP level, the two superposed layers have different physical properties (viscous/ ideal fluid), and analyzed through asymptotic rescalings of the equations. At the integrated level, the closest formulation is the apparent topography, 
we have again two superposed layers of different physical nature, and their relative thickness, $\bar\delta\delta_1/h$, is not fixed but evolves in time, in contrast with the multi-layer model.

  %%%%%%%%%%%%%%%%%%%%%%%%%%%%%%%%%%%%%%%%%%%%%%%%%%%%%%%%%%%%%%%%%%%%%%%%%%%
  \section{Numerical illustrations}\label{Examples}
  %%%%%%%%%%%%%%%%%%%%%%%%%%%%%%%%%%%%%%%%%%%%%%%%%%%%%%%%%%%%%%%%%%%%%%%%%%%
The aim of this last part is to give a few illustrations of the behaviour of the ESW system \eqref{eq:ESW-IBL}. We are aware that more accurate analysis is mandatory, in both the numerical approach (in particular numerical boundary conditions) and the qualitative and quantitative behaviour of the model. Fisrt we give a convergence study based on a steady-state solution. A second step is devoted to a comparison between the ESW solutions and those of the classical viscous shallow water system. Finally we compute the solutions over a small bump, in order to evidence the above mentioned phase-lag of the friction term, and the behaviour of the model with respect to various parameters.

In all test cases, we considered a very thin viscous layer by setting $\bar\delta = 10^{-3}$. %Falkner-Skan closure \eqref{eq:Falkner-Skan-closure}---which includes as well the Blasius one---was used to compute the shape and friction factors. 
The Froude number $Fr$ is set to unity meaning that the longitudinal velocity was scaled by the reference celerity $\sqrt{gh_0}$. The computation domain was $[0,L]$. Initial conditions was $h(0,x) = h_0, ~ u_e(0,x) = 1, ~ \delta_1(0,x) = 0 ~\forall x\in[0,L]$. We set $h_0 = 2$ for subcritical test cases, so $Fr_0 \simeq 0.7$, while we used $h_0 = 0.5$, i.e. $Fr_0 \simeq 1.4$, for supercritical cases. On the boundary conditions, we considered at $x=L$ a free outflow boundary. At $x=0$ for both sub- and supercritical flows we imposed a constant velocity $u_e = 1$ with flat profile, so that $\delta_1 = 0$. For supercritical flows, a constant water depth $h =h_0$ was imposed as well. In the case of subcritial flows, $h$ was computed using the Riemann invariant of classical shallow water model. This approach is only an approximation for ESW, more in-depth study on  numerical boundary condition is of course needed.

  \subsection{Convergence study on a Blasius-like steady solution}
  %%%%%%%%%%%%%%%%%%%%%%%%%%%  ----------------------------------------- convergence en plat Blasius

In this section we investigate an approximate stationary solution of the ESW model which is an order one perturbation of a basic
stationary solution to the ESW system, namely $(h^0,u_e^0,\delta_1^0)$, where $h^0$ and $u_e^0$ are constant solutions
of the frictionless shallow water system, and $\delta_1^0$ is  the classical Blasius
profile for the von K\'arm\'an equation on a flat plate. 
We look for a solution  to the ESW system at first order in $\bar\delta$
\begin{equation}\label{eq:Blasius-linearized}
    h = h^0 + \bar\delta h^1, \quad u_e = u_e^0 + \bar\delta u_e^1, \quad \delta_1 = \delta_1^0 + \bar\delta\delta_1^1.
    \end{equation}
A very interesting feature of this solution is that the order one terms are not necessarily stationary.
This gives an explicit illustration of the actual interaction between the viscous layer and the perfect fluid.

Plugging the expansion \eqref{eq:Blasius-linearized} in \eqref{eq:ESW-IBL}, we recover through the first two equations a standard inviscid shallow water model, for which a basic stationary solution consists in constant $h^0$ and $u_e^0$. Now we turn to the stationary von K\'arm\'an equation 
\begin{equation*}
u_e\delta_1\partial_x u_e + \partial_x\left(\frac{u_e^2\delta_1}{H}\right) = \frac{f_2H}{\delta_1}u_e.
\end{equation*}
In this equation, $H$ and $f_2$ depend on 
$\Lambda_1 = \delta_1^2\partial_xu_e =  \bar\delta \delta_1^0\partial_x u_e^1 + O(\bar\delta^2)$.
Therefore at zeroth order $H$ and $f_2$ are constant, so that we indeed recover the classical Blasius solution
\begin{equation}\label{eq:Blasius-solution}
\delta_1^0 = \sqrt{\frac{2f_2 H^2}{u_e^0}x} = 1.718\sqrt{x}, \quad \bar\tau_b^0 = \frac{f_2 H}{\delta_1^0} = \frac{0.332}{\sqrt{x}}.
\end{equation}
We use the solution $(h^0,u_e^0,\delta_1^0)$ as the basic solution in the expansion \eqref{eq:Blasius-linearized}, and turn now to order one terms. Once again, straightforward computations lead to uncoupling the first two equations, yealding the
following linearized shallow water model:
\begin{equation}
    \partial_t\begin{pmatrix}h^1\\u_e^1\end{pmatrix} 
        + \begin{pmatrix}u_e^0 & h^0\\\frac{1}{Fr^2} & u_e^0\end{pmatrix}\partial_x\begin{pmatrix}h^1\\u_e^1\end{pmatrix}
        = \begin{pmatrix}\partial_x(\delta_1^0u_e^0)\\0\end{pmatrix}.
\end{equation}
Notice that this system has a stationary solution, given by
\begin{equation*}
h = h^0 + \bar\delta \frac{Fr_0^2}{Fr_0^2 -1}\delta_1^0, \quad u_e = u_e^0 + \bar\delta \frac{1}{1-Fr_0^2}\delta_1^0,
\end{equation*}
in which we have defined the local Froude number $Fr_0=Fr\ u_e^0/\sqrt{h^0}$. 

We consider the computation domain $x\in[0,0.1]$.
On figure \ref{fig:convergence-blasius}-left we plot the Blasius solution in which the displacement thickness $\delta_1^0$ increases in function of $\sqrt{x}$ while the friction decreases from infinity according to \eqref{eq:Blasius-solution}. On Figure \ref{fig:convergence-blasius}-right we display the results of a mesh convergence study on the gap between $\delta_1$ and its zeroth order approximation $\delta_1^0$. The convergence study was performed in both sub- and supercritical regimes. 

First we remark 
that when the mesh size $\Delta x$ is small enough, namely $\Delta x \leq 10^{-4}$, numerical results reach the model error. Indeed, we obtained at this spatial resolution that 
\begin{equation*}
\int_0^{0.1}|\delta_1 - \delta_1^0|dx \simeq 0.1 \bar\delta \quad\text{and so}\quad \delta_1 - \delta_1^0 = O(\bar\delta).
\end{equation*}

Next, we notice that the supercritical case converges faster than the subcritical one. This could be explained by the fact that, on the one hand, numerical treatment of the left boundary condition is more accurate in the supercritical case as we have noticed before; on the other hand, it is well known that the HLL-type numerical flux \eqref{eq:numerical-fluxes} is also more accurate in that case.
\begin{figure}[ht!] 
      \centering
      \includegraphics[width=0.49\linewidth]{./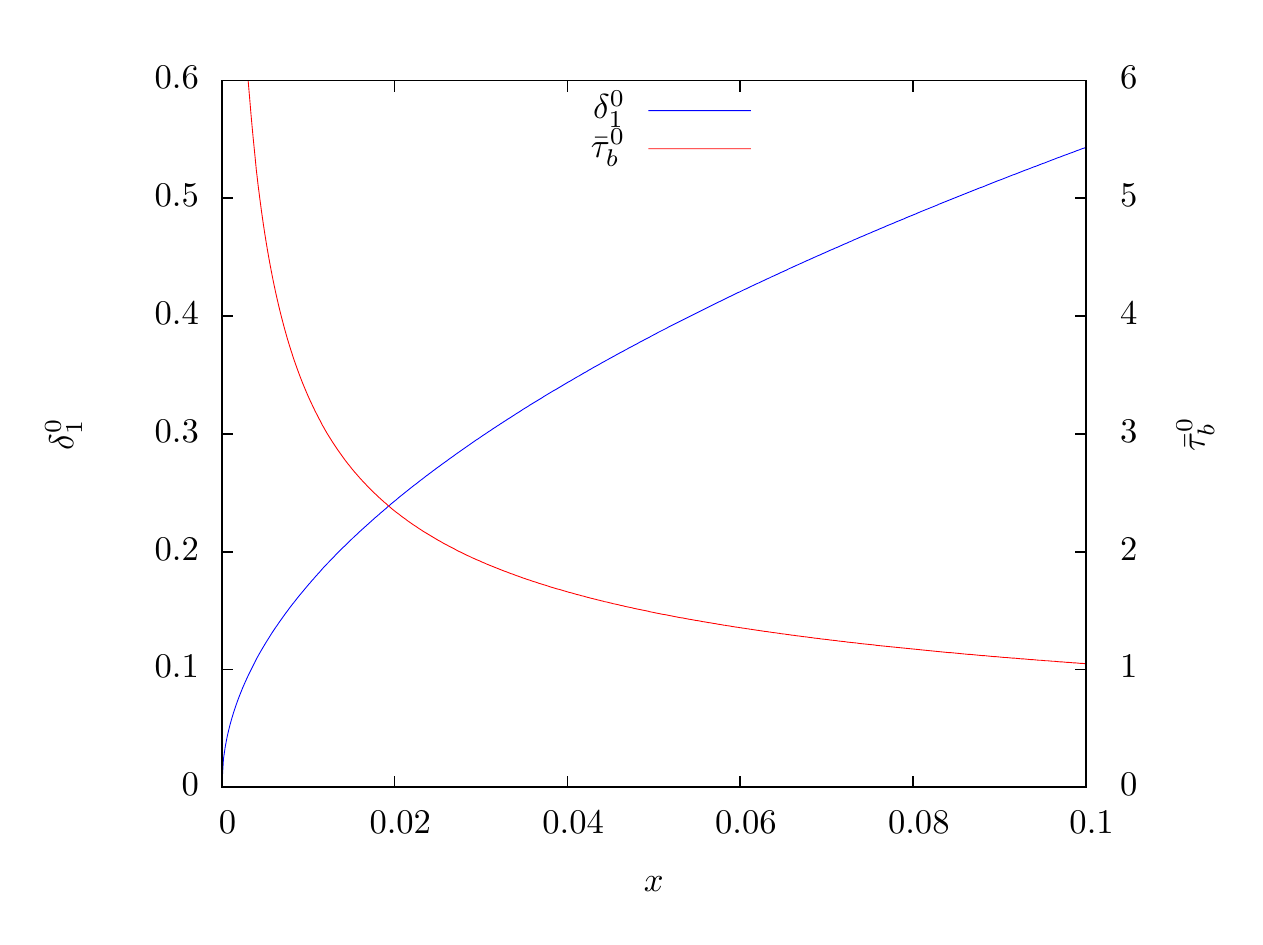}
      \includegraphics[width=0.49\linewidth]{./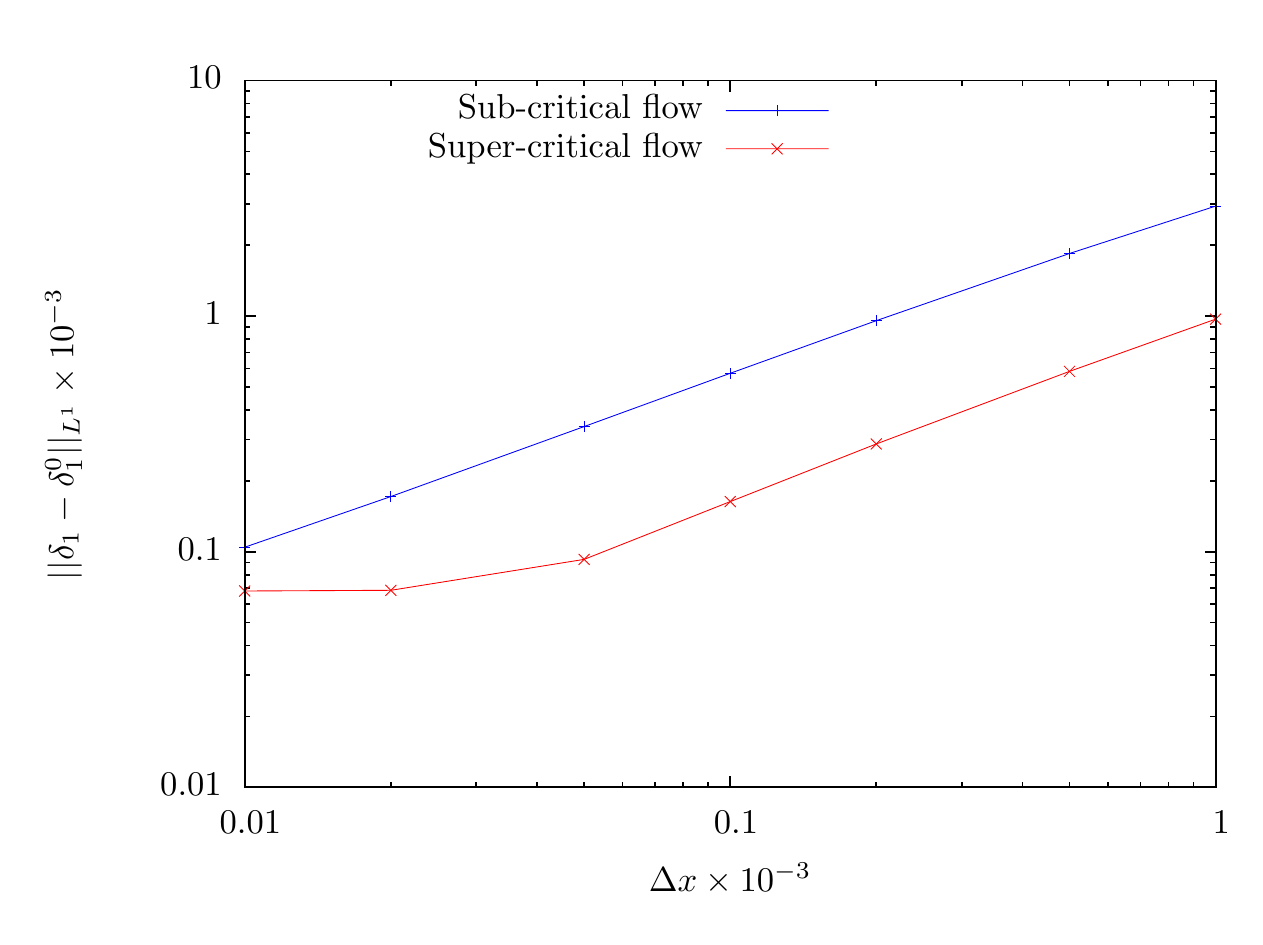}
      \caption{Left: Blasius solution for sake of illustration, displacement thickness;
      $\delta_1^0 =   1.718\sqrt{x}$,
      and shear $\bar\tau_b^0 ={0.332}/{\sqrt{x}}$.
      Right: error $\int_0^{0.1}|\delta_1-\delta_1^0| dx$ as the function of the mesh size $\Delta x$ }\label{fig:convergence-blasius}
    \end{figure}

\subsection{Impulsively started flow over a flat bed} \label{sec:blasius-stokes}
 %%%%%%%%%%%%%%%%%%%%%%%%%%%%%%%%%%%%%%%%%%%%%%%%%%%%%%%%%%%%%%%%%%%%%%%%
We turn now to a configuration introduced by Stewartson \cite[Sec. 3]{Stewartson1951,Stewartson1973} as a simple test-case to study unsteady boundary layer solutions. It consists in a semi-infinite flow impulsively started from rest at $t=0$ with constant velocity $u_e$, see figure \ref{Fig-test1}. 
The fluid is injected continuously at $x=0$ with a constant velocity, the flow must satisfy the no slip boundary condition for $x>0$. 

The solution exhibits different behaviours depending on two asymptotic regimes: for large $t$ (or small $x$) we recover the Blasius solution \eqref{eq:Blasius-solution}; conversely, for large $x$ (or small $t$), convective terms in the momentum equation \eqref{Eccl2} are negligible so the Prandtl system reduces to Stokes' first problem (also called Rayleigh problem by Stewartson). The solution for the velocity profile can be expressed using the erf error function:
\begin{equation}\label{eq:Stokes-solution}
  \begin{aligned}
      & \frac{\bar u}{u_e}=\textrm{erf}\left(\frac{\bar y}{2 \sqrt{t}}\right),\quad
  \delta_1 = 2 \sqrt{\frac{t}{\pi}},\quad
  \bar\tau_b = \frac{1}{\sqrt{\pi t}},  \\
  & H = 1 + \sqrt{2} \simeq 2.414, \quad f_2 = \frac{2}{\pi(1+\sqrt{2})} \simeq 0.264. 
  \end{aligned}
  \end{equation}
  %This reads  :
  %$$u=erf(\frac{y}{2 \sqrt{t}}),\quad
  %\delta_1(x,t) = 2 \sqrt{\frac{t}{\pi}},\quad
  %\tau= \frac{1}{\sqrt{\pi t}}, \quad  \mbox{ for large } x/t,$$
  %$$u=f'_B(\frac{y}{\sqrt{x}}),\quad
  %\delta_1(x,t) = 1.7 \sqrt{x},\quad
  %\tau= \frac{.332}{\sqrt{x}}, \quad  \mbox{ for small } x/t.$$
  Transition between these two solutions occurs for $x=O(t)$.% see  \cite{Stewartson1951,Stewartson1973}.

  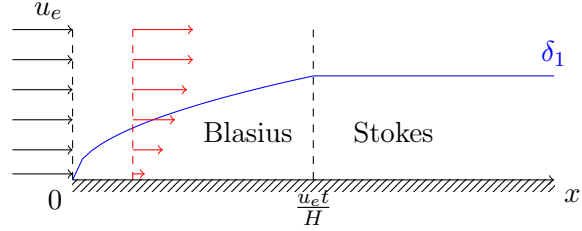
\begin{figure}[ht!]\centering
    \begin{tikzpicture}[domain=-1:8, scale=0.8]
      \draw[->] (0,0) node[below left]{$0$} -- (8,0) node[below right]{$x$};
      \fill[pattern=north east lines] (0,0) -- (8,0) -- (8,-0.2) -- (0,-0.2);
      \draw[black,dashed] (0,0) -- (0,2.5);
      \draw[blue] plot[domain=0:4] (\x,{5*(1.73*sqrt(0.01*\x))});
      \draw[blue] (4,1.73) -- (8,1.73) node[above]{$\delta_1$};
      \draw[->] (-1,0.1) -- (0,0.1);
      \draw[->] (-1,0.5) -- (0,0.5);
      \draw[->] (-1,1) -- (0,1.);
      \draw[->] (-1,1.5) -- (0,1.5);
      \draw[->] (-1,2.) -- (0,2);
      \draw[->] (-1,2.5) -- (0,2.5) node[above left]{$u_e$};
      \draw[thin, dashed, red] (1,0) -- (1,2.5);
      \draw[->, red] (1,0.1) -- (1.2,0.1);
      \draw[->, red] (1,0.5) -- (1.5,0.5);
      \draw[->, red] (1,1) -- (1.7,1);
      \draw[->, red] (1,1.5) -- (1.9,1.5);
      \draw[->, red] (1,2) -- (2,2);
      \draw[->, red] (1,2.5) -- (2,2.5);
      \draw (2.,0.8) node[right] {Blasius};
      \draw[black,dashed] (4,0) node[below]{$\frac{u_e t}{H}$} -- (4,2.5);
      \draw (4.5,0.8) node[right] {Stokes};
    \end{tikzpicture}
    \caption{{\small Impulsively started flow over a flat bed (Stewartson's 1951-1973 problem): transition between unsteady Stokes Rayleigh problem and steady Blasius problem.}}
    \label{Fig-test1} 
  \end{figure}

  Stewartson noticed that an approximate integral form can be used to solve this problem. Assuming a {\em fixed profile}, e.g. Blasius (constant) value of $H$ and $f_2$, von K\'arm\'an equation (\ref{VKdelta1}) can be rewritten in the form 
  \begin{equation*}%\label{eq:Blasius}
    \partial_t(\delta_1^2)+\frac{u_e}{H}\partial_x(\delta_1^2)=2f_2H,
  \end{equation*}
  and it has to be complemented with the following initial and boundary value conditions:
  \begin{equation*}%\label{IBV:Blasius}
    \delta_1(0,x) = \delta_1(t,0)= 0, \quad x,t\in \mathbb{R}^+.
  \end{equation*}
  The solution is readily obtained by the method of characteristics:
  \begin{equation}\label{Blasius-Stoke-delta1}
    \delta_1 = \left\{\begin{aligned}
    & \sqrt{\frac{2f_2 H^2x}{u_e}} & \text{ if } x \leq \frac{u_e t}{H}, \\
    & \sqrt{2f_2 H t} & \text{otherwise},%\text{ if } x > \frac{u_e t}{H},
    \end{aligned}\right. 
    \qquad \text{and} \qquad
    \bar\tau_b = \left\{\begin{aligned}
    & \sqrt{\frac{f_2u_e^3}{2x}} & \text{ if } x \leq \frac{u_e t}{H}, \\
    & \sqrt{\frac{f_2 H u_e^2}{2t}} & \text{otherwise},%\text{ if } x > \frac{u_e t}{H}.
    \end{aligned}\right.
  \end{equation}
  Transition zone is found at the characteristic line  $x = \frac{u_e t}{H}$, i.e. viscous layer informations propagate at velocity $u_e/H$ as we have seen in equation \eqref{eq:FS-lambda3}. It is worth noticing that this solution with constant $u_e$ creates an unbounded viscous layer, without regards of the limitation of the water depth. Hence it is clearly physically not valid for large $x$ or $t$. %For this reason, we take again a short computation domain $x\in[0,0.1]$ and use also $\bar\delta = 10^{-3}$ in order to well approximate the steady part of solution. Moreover Blasius closure was considered for constant value of $H$ and $f_2$. 
  
  On figure \ref{fig:dd1}-left, we plot on the left side the evolution of the displacement $\delta_1$ along $x$ at different times. The two-regimes behaviour of the unsteady solution \eqref{Blasius-Stoke-delta1} is qualitatively well recovered: $\delta_1$ is constant in $x$ far from the entrance and increases in time; it reaches the Blasius solution which is steady. But, quantitatively, we observe a gap between the solution of ESW and the Stokes one, which increases in time. This
  is due to the choice of a fixed Blasius profile in the viscous layer. Regarding equations \eqref{eq:Blasius-solution} and \eqref{eq:Stokes-solution}, the Blasius and Stokes  profile have not exactly the same shape and this leads to different values of $H$ and $f_2$. More precisely, in the Stokes region the model predicts $\delta_1 = 1.067\sqrt{t}$ while the exact solution is $\delta_1 = 1.128\sqrt{t}$.  
   \begin{figure}[ht!]
    \centering
    \includegraphics[width=0.49\linewidth]{./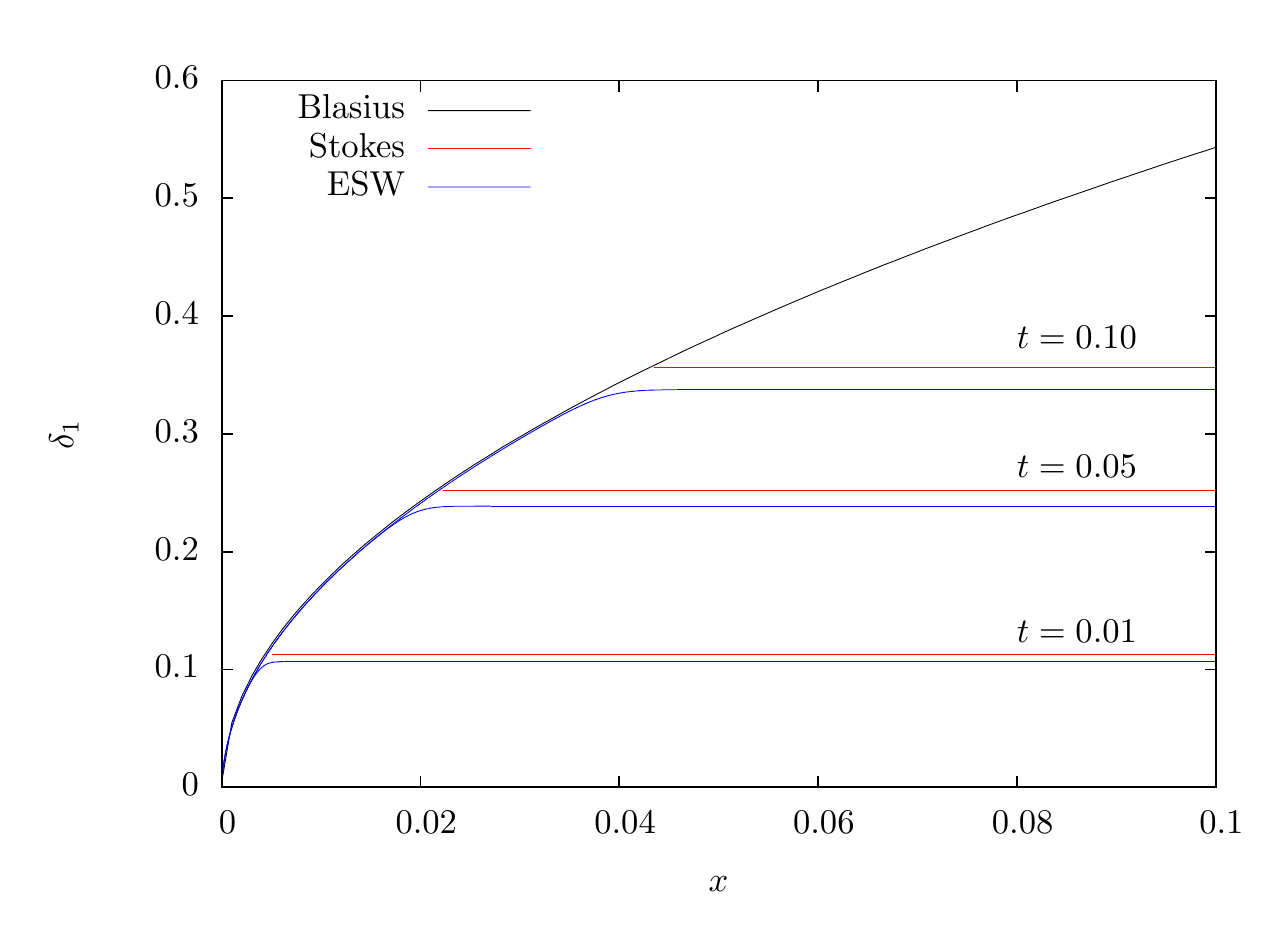} 
    \includegraphics[width=0.49\linewidth]{./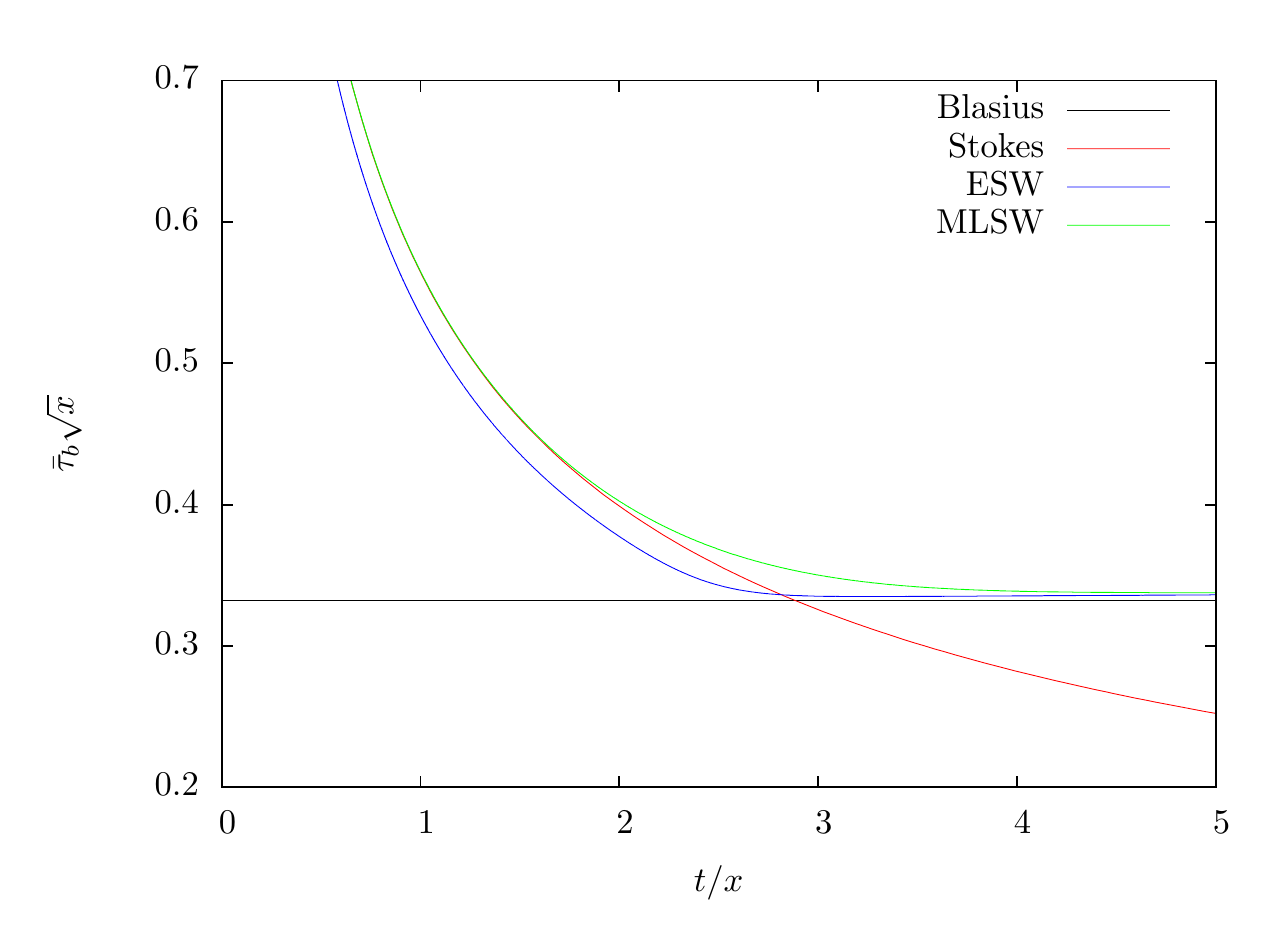} 
    \caption{{\small From Stokes to Blasius solution. Left: development of displacement thickness $\delta_1$ function of $x$ at different times; Right: comparaison of ESW and multilayer shallow water model (MLSW) for $\bar\tau_b\sqrt{x}$ in function of $t/x$: for small $t/x$, the solution is that of Stokes $\bar\tau_b\sqrt{x} = \sqrt{x/\pi t}$; at large $t/x$ the curves collapse on the Blasius solution $\bar\tau_b \sqrt{x} = 0.332$.}}\label{fig:dd1}
  \end{figure}

 To illustrate the transition in profiles from Stokes to Blasius, we propose here to compare 
the numerical results given by the ESW model with those of the multi-layer scheme described in Section \ref{sec:MLF}
\eqref{LWmomentum}--\eqref{eq:no-slip}. 
  On figure \ref{fig:dd1}-right, we plot the difference between the numerical solutions and the two asymptotic solutions as function of $t/x$. We plot on this figure $\bar\tau_b\sqrt{x}$ which is $\sqrt{x/\pi t}$ for small $t/x$ (Stokes solution) and becomes $0.322$ (the Blasius value) for large $t/x$. On the solution of ESW, the unsteady part of ESW solution presents a small difference as well. This is simply due to the fact that the Blasius profile and the Stokes one have not exactly the shame shape as we have observed for $\delta_1$. The value $f_2H$ given by the Stokes solution is greater than that of the Blasius solution. Good agreement was found on the solution of MLSW: the solution tends from Stokes to Blasius behaviours; the transition between these two regimes is clearly captured.

  %%%%%%%%%%%%%%%%%%%%%%%%%%%%  ----------------------------------------- bosses
  \subsection{Flows over a small bump}
  This final section considers cases with a  flat bottom with a bump.
  The lenght of the bump is such that those compatible with classical shallow water model.
    Hence, we consider now a   domain  $x \in [0,2]$ with a small bump of Gaussian form at the center:
  \begin{equation*}
      f_b = \alpha e^{-\frac{(x-1)^2}{2\sigma^2}}, 
  \end{equation*}
 with $\alpha$ and $\sigma$ given.
  Falkner-Skan closure, which includes as well the Blasius one, is used to compute the shape and friction factors. As defined by \eqref{eq:Falkner-Skan-closure}, we have to compute the pressure gradient parameter $\Lambda_1 = \delta_1^2\partial_x u_e$ for which accurate approximation for the partial derivative $\partial_x u_e$ is required in order to provide a faithful representation of the shape of thin viscous layer. Therefore, we test second order and fourth-order finite difference derivative:
  \begin{equation}\label{eq:velocity-gradient-estimation}
      (\partial_x u_e)_j = \frac{(u_e)_{j-2} - 8(u_e)_{j-1} +  8(u_e)_{j+1} - (u_e)_{j+2}}{12\Delta x} + o(\Delta x^4).
  \end{equation}
  
  \medskip
  \noindent{\bf Influence of the flow regime.}
  We use first $\alpha=0.01$ and $\sigma=0.1$. As in the test case of an impulsively started flow, the displacement thickness develops from zero, at $t=0$. It reaches a steady value when the time is large enough, typically when $t > xH/u_e \simeq 6$ from characteristic solution \eqref{Blasius-Stoke-delta1} applied for this case. However, this steady solution is no longer Blasius but is slightly perturbed due to the presence of the bump; this perturbation depends furthermore on the flow regime, as one can observe on figure \ref{fig:bump-FS-delta1-taub}.
  
  For sub-critical case, the flow is accelerated on the upstream side of the bump while it is decelerated on the downstream side. As a consequence, the displacement thickness decreases before the crest and increases after. This behaviour is also well reported by classical shallow water model, even with linearized solution \eqref{eq:lin-sol-sw}. However, ESW provides an asymmetric friction due to inertia effect of the fluid, compared with \eqref{eq:lin-sol-sw}. More precisely, the friction reaches its maximum before the crest for sub-critical flow while for  super-critical  case, it becomes maximum after the crest, see figure \ref{fig:bump-FS-delta1-taub} (right). This is in fact the important phage-lag behaviours that we have found to cope with ESW. 
  
 \begin{figure}[ht!] 
      \centering
      \includegraphics[width=0.49\linewidth]{./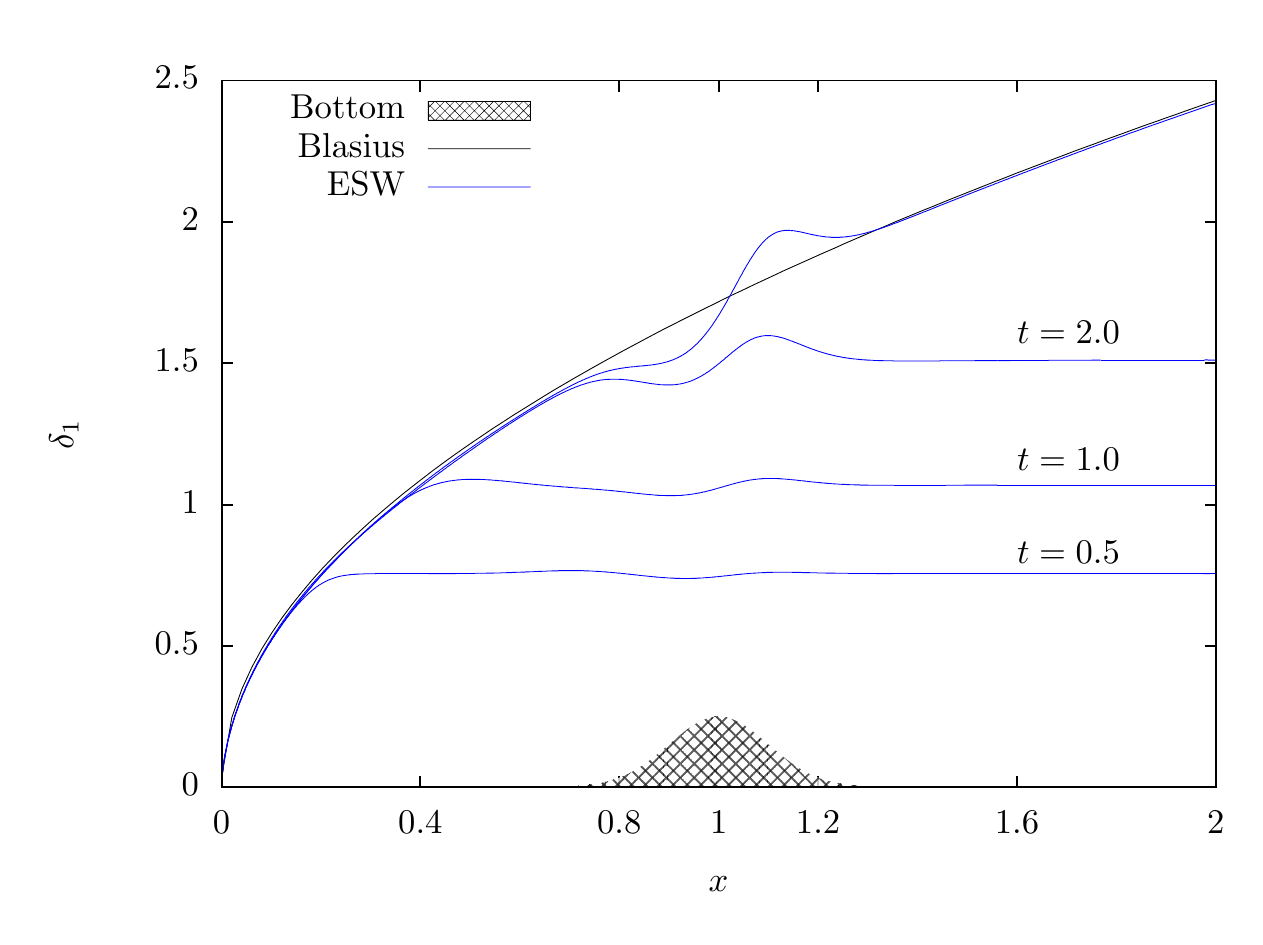}
      \includegraphics[width=0.49\linewidth]{./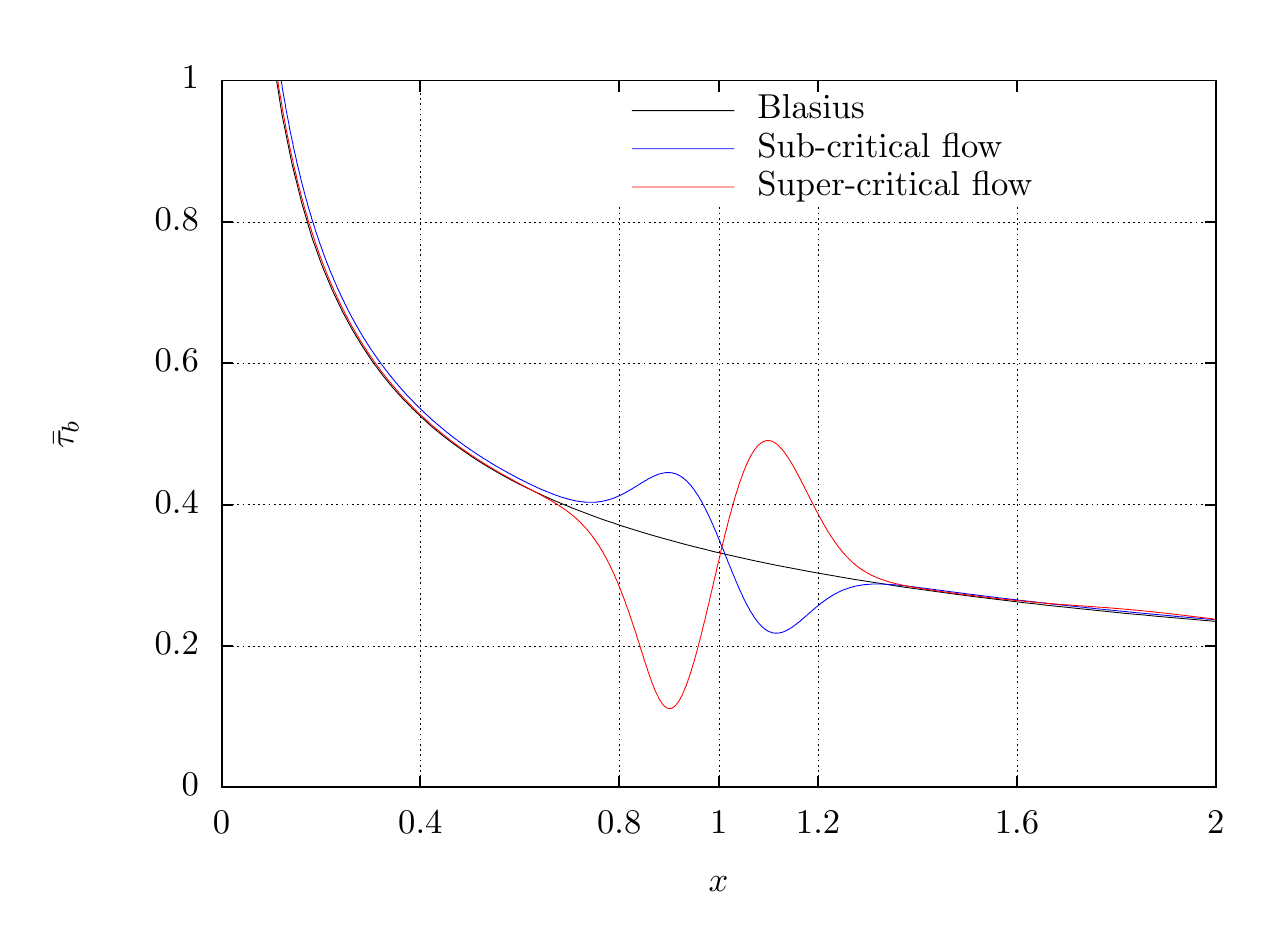}
      \caption{Flow over a small bump at different times. Left: development of displacement thickness at different times in sub-critical case. Right: behaviours of friction in sub-critical and super-critical cases at long time ($t=6$)}\label{fig:bump-FS-delta1-taub}
    \end{figure}
  
   \medskip
  \noindent{\bf Phase-lag reduced for shorter bump.} %It is known that the bump's shape can modify significantly the structure of flow.
 Here we investigate the influence of the length of the bump on the friction computed with ESW model. Returning to sub-critical case, we perform now the same test case but with a shorter bump, by imposing $\sigma = 0.05$. Regarding the result given with Falkner-Skan closure, i.e. when the shape factor depends on velocity gradient $\partial_x u_e$, figure  \ref{fig:bump-FS-taub-sigma} (left) shows that the variation of friction is more important than the case with $\sigma = 0.1$. The phage-lag   is always observed.  
  \begin{figure}[ht!] 
      \centering
      \includegraphics[width=0.49\linewidth]{./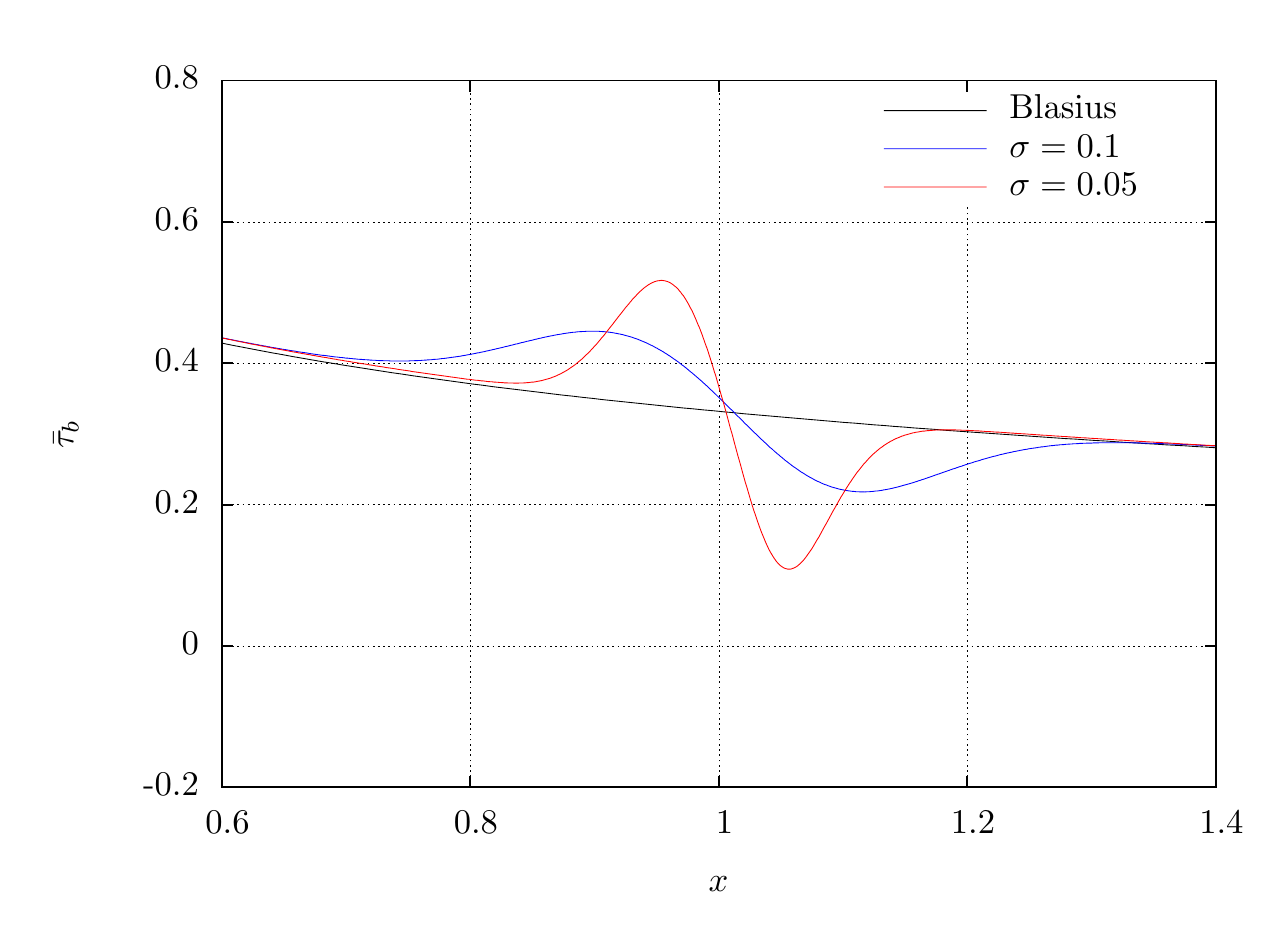}
      \includegraphics[width=0.49\linewidth]{./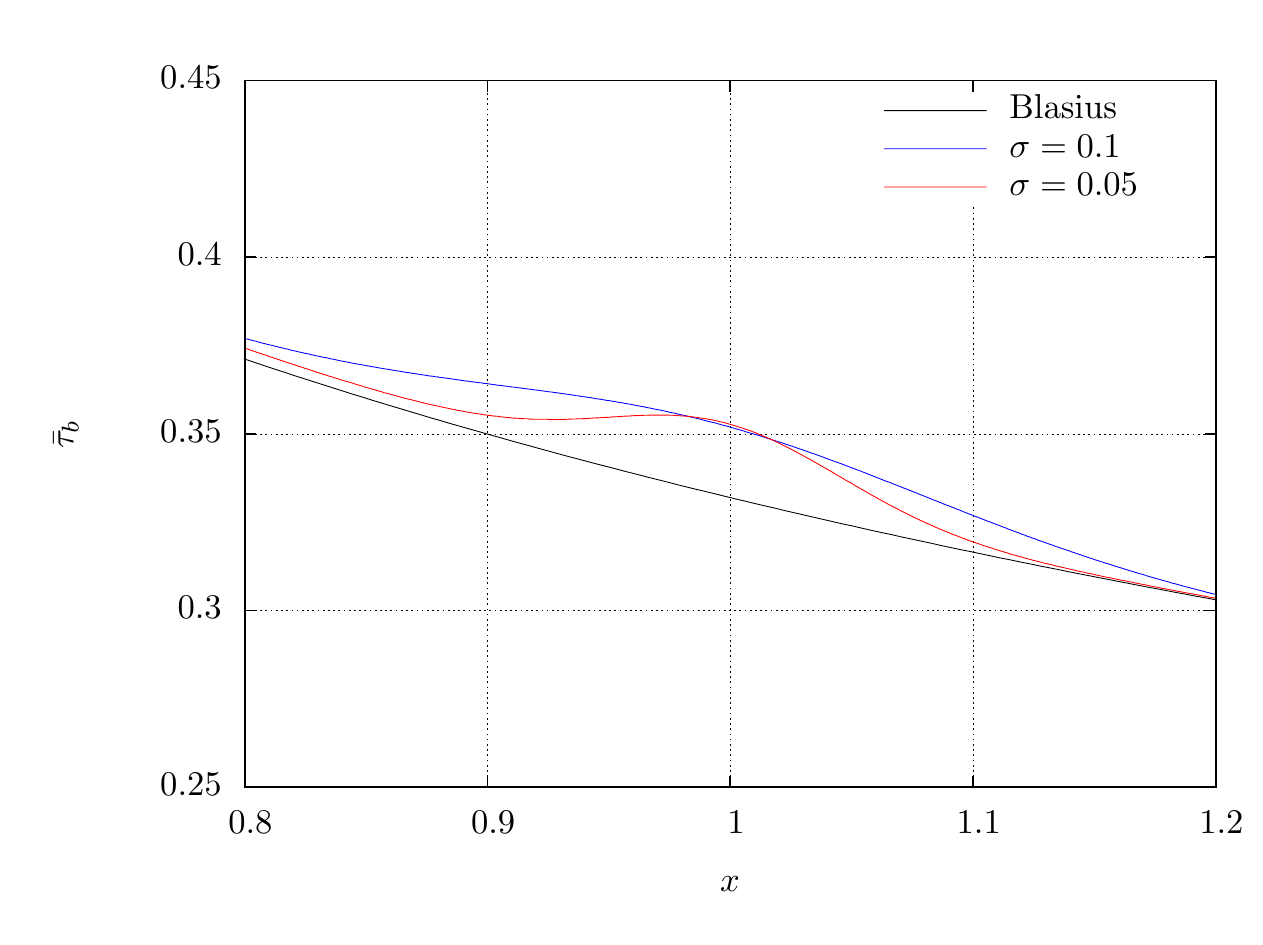}
      \caption{Phase-lag for two short bumps ($\alpha=0.05$) compared to Blasius solution ($\alpha=0.$) . Left: friction with Falkner-Skan closure (\ref{eq:Falkner-Skan-closure}). Right: friction with constant value of $H=2.59$, and $f_2=0.22$ from Blasius solution.}\label{fig:bump-FS-taub-sigma}
    \end{figure}
    
 To highlight the role of the closure imposed in viscous layer, we 
 do again this test but with a constant value of shape factor (e.g. by using Blasius closure). 
 It can be observed on figure \ref{fig:bump-FS-taub-sigma} (right) that both amplitude and phase-lag of friction are significantly reduced compared to the case with (variable) Falkner-Skan closure. Moreover, constant shape factor does not allow the friction to decrease enough in decelerated region localized at downstream side of the bump. As a consequence, this kind of closure is unable to recover reverse flow whatever the bump shape.  
  
 \medskip
 \noindent{\bf Larger friction for higher bump, reverse flow observation.} This last test case is devoted to highlight  the possibility   of ESW to  capture indeed reverse flow if the bump is high enough.
 We fix the bump length to be $\sigma = 0.1$ and change  $\alpha$. Falkner-Skan closure (\ref{eq:Falkner-Skan-closure}) is of course imposed. Figure \ref{fig:bump-FS-taub-alpha-order} shows the displacement thickness and the friction for increasing $\alpha$ up to 
 negative value of friction (which defines steady boundary layer separation). Associated to boundary layer separation is an increased displacement thickness.

 Note that accurate estimation of velocity gradient $\partial_x u_e$ is very important to capture this particular phenomena. That is why   we have used the fourth-order formula \eqref{eq:velocity-gradient-estimation}. Using a second-order approximation for $\partial_x u_e$ results just in the incipient  separation.
 
  \begin{figure}[ht!]   
    \centering
     \includegraphics[width=0.49\linewidth]{./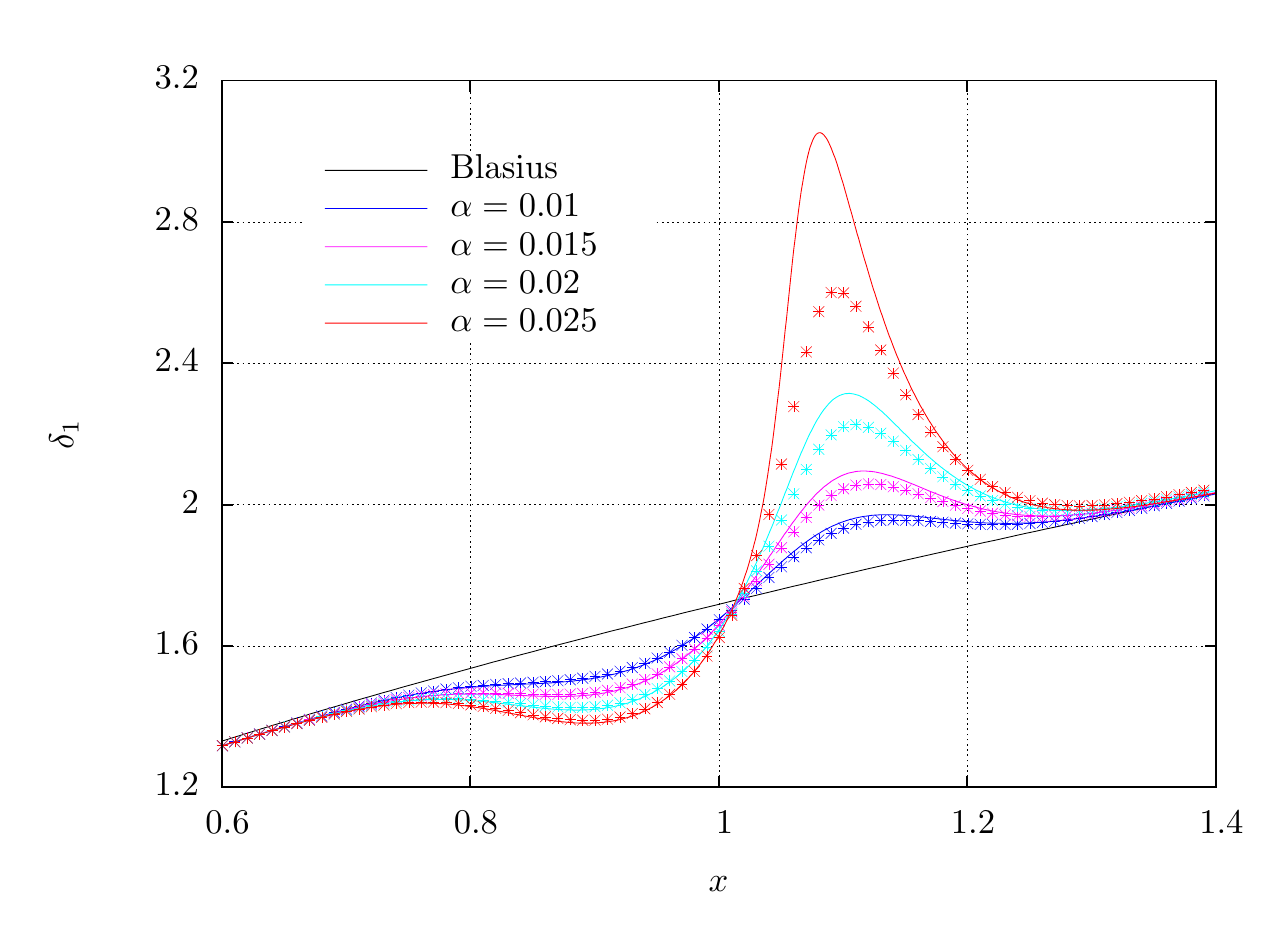}
     \includegraphics[width=0.49\linewidth]{./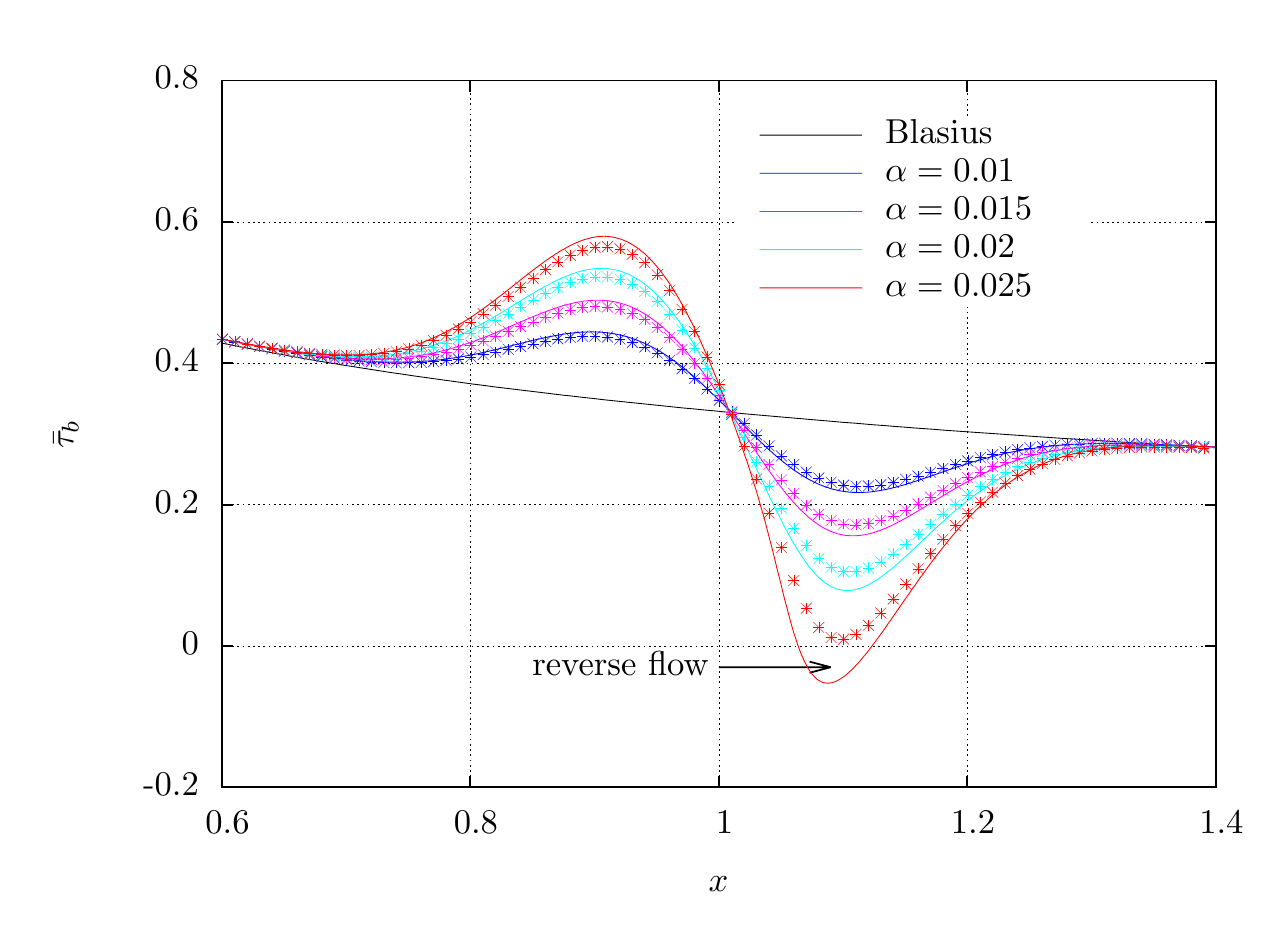}
      \caption{ESW for various bump height $\alpha$ at fixed lenght $\sigma=0.1$ with 2nd order (symbols) and 4th order (lines) discretization.
      Left the displacement thickness, right the wall shear stress. The influence of the 2nd and 4th order derivative is noticeable for  $
      \alpha = 0.025$ near separation. }
     \label{fig:bump-FS-taub-alpha-order}
    \end{figure}

   \begin{figure}[ht!]   
    \centering
      \includegraphics[width=0.49\linewidth]{./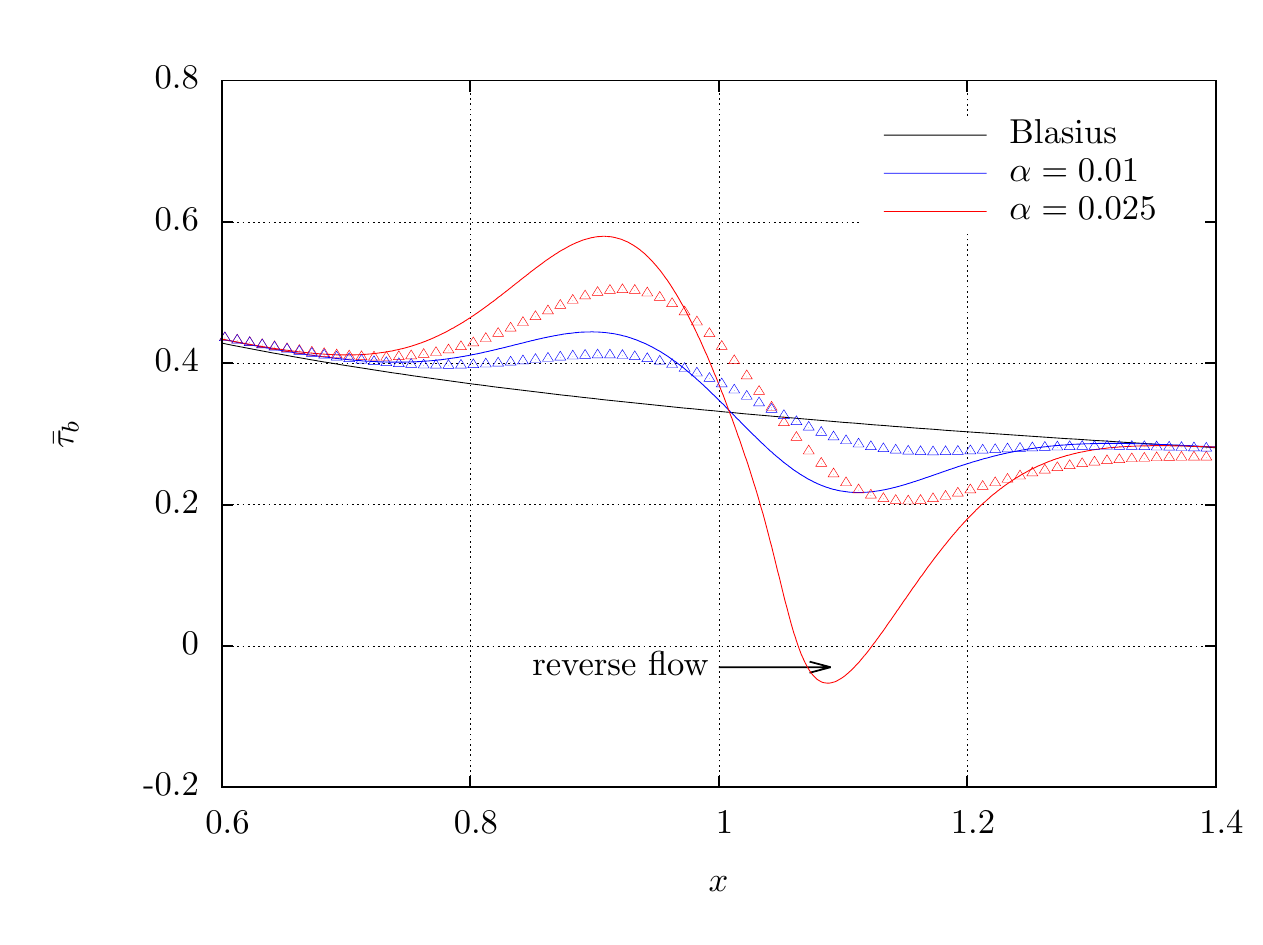}
      \includegraphics[width=0.49\linewidth]{./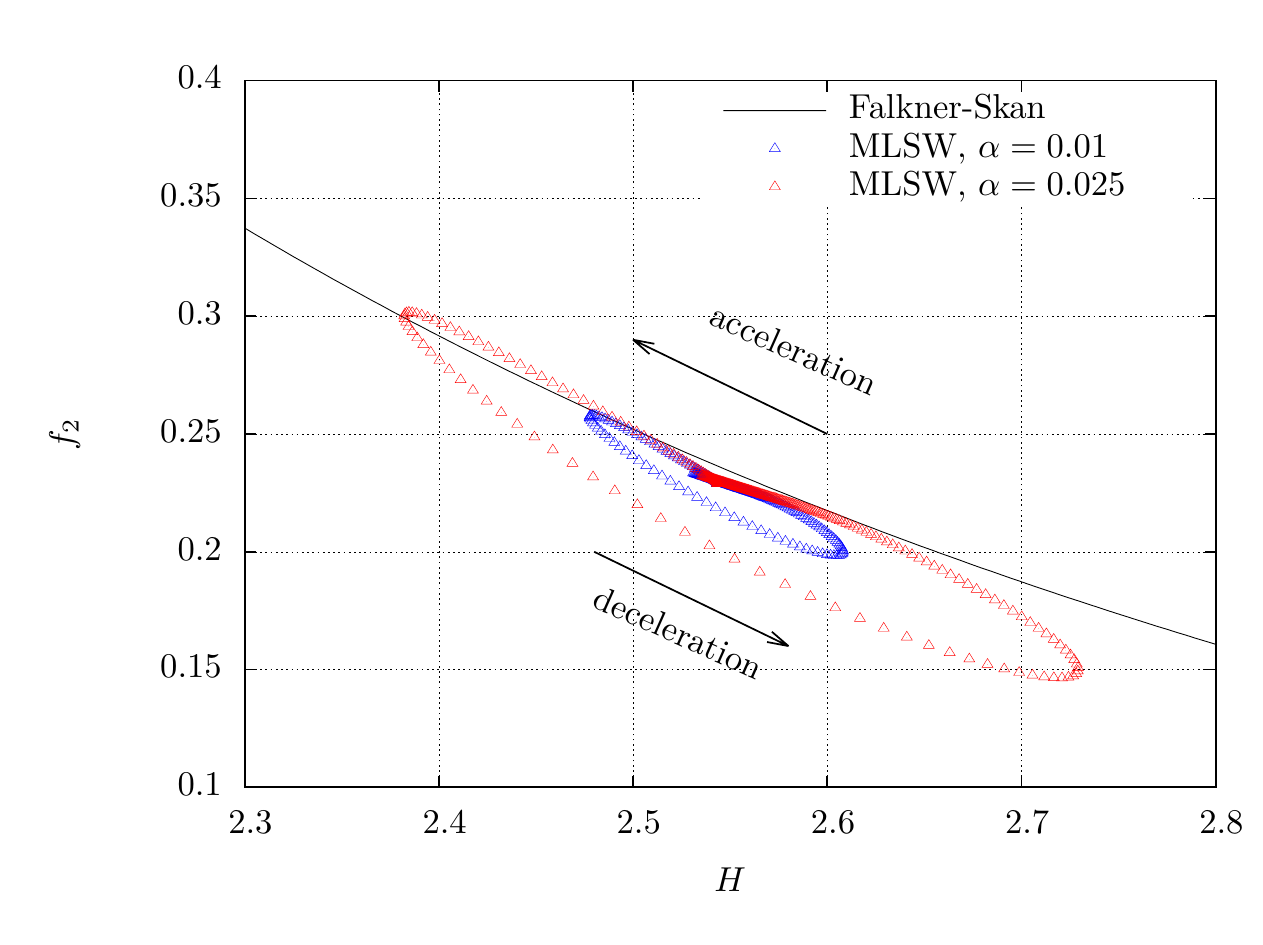}
      \caption{Friction of ESW (lines) vs MLSW (symbols) for two values of $\alpha$.
      Right: relation between $H$ and $f_2$ given by MLSW (symbols) compared to Falkner-Skan closure used for ESW (line). }
      \label{fig:bump-FS-taub-alpha}
    \end{figure}
 
 Finally,  performing this test case with MLSW model allows to find as well the phase-lag, but the amplitude of friction is smaller as observed on figure \ref {fig:bump-FS-taub-alpha}. It seems that numerical solver of MLSW model together with the first-order approximation \eqref{eq:friction-MLSW} on $\bar\tau_b$, and even with 100 layers, is not enough to accurately capture the friction. 
 %Indeed, the maximum value of shape factor computed with ESW model is larger than $4$ (where does reverse flow) while MLSW model predicts a shape factor ranging between $2.38$ and $2.72$, see figure \ref{fig:bump-FS-taub-alpha} (right).
 
 Nevertheless,   plotting the friction factor $f_2$ as  function of the shape factor $H$ and comparing it with Falkner-Skan closure (\ref{eq:Falkner-Skan-closure}) shows that indeed, the reduced shear and shape factors computed from the profilers of MLSW are close to Falkner Skan curve. The smaller the angle, the closer the curves.
 The agreement is better during the accelerated phase of the flow.  
    
%%%%%%%%%%%%%%%%%%%%%%%%%%%%  ----------------------------------------- 

 %%%%%%%%%%%%%%%%%%%%%%%%%%%%%%%%%%%%%
  \section{Conclusion}
  %%%%%%%%%%%%%%%%%%%%%%%%%%%%%%%%%%%%%

 In  order to improve "shallow water" models, 
this paper proposed a novel description of parietal friction for
free surface shallow flows (long wave approximation)  in large Reynolds number  limit. 
The proposed  model relies on a perfect fluid -- viscous layer decomposition. It  consists in a system of four equations: conservation of mass (\ref{eq:sw-mass}),  momentum  (\ref{eq:sw-momentum-final}), an ideal fluid equation (\ref{eq:FP}), and the von K\'arm\'an equation (\ref{VKprof}). 
  The first two are similar to  classical shallow water system with a slight correction  on  momentum flux, and with a specific friction term.
  
  Two equivalent versions of the model can be obtained by interpreting them under two points of view. In the first one, the viscous layer acts as a new topography for the model, see (\ref{eq:ESW-apparent-topo}). In the second one,
this is  an example of ``Interactive Boundary Layer'' or ``Viscous Inviscid Interaction'', see \eqref{eq:ESW-IBL}, and it seems to be the most convenient choice for numerical purposes. 
  In these models the friction term is no longer an empirical combination of velocity and depth (as in usual laws such as  Darcy or Manning) but the result of a viscous layer like approach. A crucial point at this stage is the choice of  an apropriate closure for the shape factor and the friction factor in the viscous layer, in order to obtain a closed system after integration.
  
  As it is, the friction term actually depends on the topography, as evidenced by the examples provided in the last part of this work. In particular its maximum is reached before or after the summit of a bump, depending on the criticity of the flow. Also, possible boundary layer separation with recirculation  
  downstream (in subcritical case) of the bump can be observed as in similar cases in litterature (\cite{Lagree2005a,Lagree2005,Lagree2007} in case of flows in pipes, and as in preliminary comparisons with multilayer shallow water  \cite{Audusse2011a}).
  Those two behaviors are impossible to observe in the classical Shallow Water model.  
  
  Our proposed  approach is restricted to laminar flows, but the ideas developed here can be extended with little modifications to mean turbulent profiles.
  
  The main drawback of this model is the viscous layer/ideal fluid decomposition, which forbids  the viscous layer to fill all the water depth 
  far downstream of the bump. 
  Extra modelling in this direction, as well as a more careful study of the closure laws in the viscous layer have now to be worked out and tested. Also, the system is conditionnally hyperbolic, and the numerical study has to be improved, in particular boundary conditions. In spite of these limitations, the model is a good compromise between the classical shallow water system and RNSP equations (discretized with the Multilayer Saint-Venant scheme\cite{Audusse2011a}), much more costly in computation time.

  \section*{Acknowledgments}
  We would like to thank
  Nicole Goutal (EDF R\&D, Laboratoire d'Hydraulique Saint-Venant) for insightful discussions.

  %%%%%%%%%%%%%%%%%%%%%%%%%%%%%%%----------------------------------------------------%Biblio

\end{document}